\newcolumntype{P}[1]{>{\centering\arraybackslash}p{#1}}
\newtheorem{theorem}{Theorem}
\newtheorem{lemma}{Lemma}
\newtheorem{corollary}{Corollary}
\theoremstyle{definition}
\newtheorem{definition}{Definition}
\newtheorem{remark}{Remark}
\begin{document}

 \tolerance2500

\title{\Large{\textbf{Units in quasigroups  with classical Bol-Moufang type  identities}}}
\author{\normalsize {Natalia Didurik, Victor Shcherbacov}
}

 \maketitle

\begin{abstract}
We prolong Kunen's research about  existence of units (left, right, two-sided) in quasigroups with   classical Bol-Moufang type identity.
  These Bol-Moufang type  identities  are listed in  Fenvesh's article \cite{Fenyves_1}.

\medskip

\noindent \textbf{2000 Mathematics Subject Classification:} 20N05

\medskip

\noindent \textbf{Key words and phrases:} quasigroup, Bol-Moufang type identity, right unit, left unit.
\end{abstract}

\bigskip

\section{Introduction}

We research the existence of units in quasigroups with classical  Bol-Moufang type identity.

Binary groupoid $(G, \ast)$ is  a non-empty set $G$ together with a binary operation \lq\lq $\ast$\rq\rq \, which is defined on the set $G$.

Groupoid $(Q, \ast)$ is called a quasigroup, if the following conditions are true \cite{VD}:
$(\forall u, v \in  Q) (\exists ! \, x, y \in  Q) (u * x = v  \, \& \,  y * u = v)$. This is a so-called existential  definition \cite{MUFANG}.

The following definition is called an equational definition of quasigroup.

\begin{definition}  \label{EQUT_QUAS_DEF} \cite{BIRKHOFF_ENG, BIRKHOFF, EVANS_49}.
 A groupoid $(Q, \cdot)$ is called a quasigroup if on the set $Q$ there exist
operations \lq\lq $\backslash$" and \lq\lq $/$" such that in the algebra $(Q, \cdot, \backslash, /)$
identities
\begin{equation}
x\cdot(x \backslash y) = y, \label{(1)}
\end{equation}
\begin{equation}
(y / x)\cdot x = y, \label{(2)}
\end{equation}
\begin{equation}
x\backslash (x \cdot y) = y, \label{(3)}
\end{equation}
\begin{equation}
(y \cdot x)/ x = y, \label{(4)}
\end{equation}
\end{definition}
 are fulfilled.

In any quasigroup $(Q, \cdot, \backslash, /)$ the following identities are true:
\begin{equation}
x/(y\backslash x) = y,  \label{T}
\end{equation}
\begin{equation}
(x/y)\backslash x = y. \label{R}
\end{equation}

Standard introduction in quasigroup theory is given in \cite{VD, HOP, 2017_Scerb}.

Let $(Q, \cdot)$ be a quasigroup.

\begin{definition}
An element $i \in Q$ is an \emph{idempotent} of $(Q, \cdot)$ if and only if $i \cdot i = i$\/.

An element $f \in Q$  is a \emph{left unit}  of $(Q,  \cdot)$ if and only if  $f \cdot x =x$ for all $x\in Q$\/.

An element $e \in Q$  is a \emph{right  unit}  of $(Q, \cdot)$ if and only if $x \cdot e =x$ for all $x \in Q$\/.

An element $e \in Q$  is a \emph{(two--sided)  unit}  of $(Q, \cdot)$ if and only if it is both left and right unit.

An element $m \in Q$ is a \emph{middle unit}  of $(Q; \cdot)$ if and only if $x \cdot x  = m$ for all $x \in Q$\/.
\end{definition}

\begin{definition}
A quasigroup is a left (right) loop if it has a left (right) unit.

A quasigroup is a loop if it has both left and right units.
\end{definition}

An identity based on a single binary operation is of Bol-Moufang type if \lq\lq both sides consist of the same three different letters taken in the same order but one of them occurs twice on each side\rq\rq \cite{Fenyves_1}. We use list and denotation of 60 Bol-Moufang type identities given in \cite{JAIEOLA_2}.  %%%See Tables 1 and 2 below  \cite{HSTY}.

\begin{remark}
There exist other (\lq\lq more general\rq\rq) definitions of Bol-Moufang type identities and, therefore, other lists and classifications of such identities \cite{AKHTAR, Cote}.
\end{remark}

 Quasigroups and loops, in which a Bol-Moufang type identity is  true, are  central and classical objects of Quasigroup Theory. We recall, works of R.~Moufang, G.~Bol, R.~H.~Bruck, V.~D.~Belousov, K. Kunen,  S.~Gagola   and of many other mathematicians are devoted to the study of quasigroups and loops with Bol-Moufang type identities \cite{MUFANG, BRUCK_60_1, VD, Fenyves_1, KUNEN_96,  Gagola_10, Gagola_11, Ph_2005, Cote, AKHTAR}.

Reformulating  title of Gagola's article  \lq\lq How and why Moufang loops behave like groups\rq\rq  \cite{Gagola_11} we can say that quasigroups with Bol-Moufang identities \lq\lq behave  like groups\rq\rq. This is one of  reasons why we study these quasigroup classes. Notice, information on right and left unit elements in quasigroups with any Bol and any  Moufang identity is known \cite{VD, GALKIN,  KUNEN, IZ_SCERB, VS_VI}.

In some cases we have used Prover9 \cite{MAC_CUNE_PROV} and Mace4 \cite{MAC_CUNE_MACE} for  finding the  proofs  and constructing counterexamples. The nearest article to our paper is Kunen's  article \cite{ KUNEN_96}. Notice, Professor Kunen in his researches have used some versions of  Prover  and Mace.

\section{Results}

\subsection{$(12)$-parastrophes of identities}

We recall, $(12)$-parastroph of groupoid $(G, \cdot)$ is a groupoid $(G, \ast)$ in which  operation \lq\lq $\ast$ \rq\rq \, is obtained by the following rule:
\begin{equation} \label{HSTY_1}
x\ast y = y\cdot x.
\end{equation}

It is clear that   for any groupoid $(G, \cdot)$   there exists its $(12)$-parastroph groupoid $(G, \ast)$.

Cayley table of groupoid $(G, \ast)$ is a mirror image of the Cayley table of groupoid $(G, \cdot)$ relative to main diagonal.
Notice, for any binary quasigroup there exist  five its parastrophes \cite{VD, HOP, 2017_Scerb} more.

Suppose that an identity $F$ is true in groupoid $(G, \cdot)$.
Then we can obtain $(12)$-parastrophic identity $F^{\ast}$ of the identity $F$ replacing the operation \lq\lq $\cdot$\rq\rq \, with  the operation  \lq\lq $\ast$\rq\rq \, and changing the order of variables using rule (\ref{HSTY_1}).
\begin{remark}
In quasigroup case,  similarly to $(12)$-parastrophe identity other parastrophe identities can be  defined. See \cite{Scer_Shcherb_2} for details.
\end{remark}

It is clear that an identity $F$ is true in groupoid $(G, \cdot)$ if and only if in groupoid $(Q, \ast)$ identity $F^{\ast}$ is true.
The following lemma is evident. See  \cite{HSTY_IMI}.

\begin{lemma}  \label{12-parastrophes_1}
For classical Bol-Moufang type identities over groupoids the following equalities are true:

$(F_1)^{\ast} = F_3$, $(F_2)^{\ast} = F_4$, $(F_5)^{\ast} = F_{10}$, $(F_6)^{\ast} = F_6$, $(F_7)^{\ast} = F_8$,
$(F_9)^{\ast} = F_9$,   $(F_{11})^{\ast} = F_{24}$, $(F_{12})^{\ast} = F_{23}$,    $(F_{13})^{\ast} = F_{22}$,
$(F_{14})^{\ast} = F_{21}$, $(F_{15})^{\ast} = F_{30}$, $(F_{16})^{\ast} = F_{29}$, $(F_{17})^{\ast} = F_{27}$, $(F_{18})^{\ast} = F_{28}$,
$(F_{19})^{\ast} = F_{26}$, $(F_{20})^{\ast} = F_{25}$,  $(F_{31})^{\ast} = F_{34}$, $(F_{32})^{\ast} = F_{33}$, $(F_{35})^{\ast} = F_{40}$,
 $(F_{36})^{\ast} = F_{39}$, $(F_{37})^{\ast} = F_{37}$, $(F_{38})^{\ast} = F_{38}$, $(F_{41})^{\ast} = F_{53}$, $(F_{42})^{\ast} = F_{54}$,
 $(F_{43})^{\ast} = F_{51}$,$(F_{44})^{\ast} = F_{52}$, $(F_{45})^{\ast} = F_{60}$, $(F_{46})^{\ast} = F_{56}$, $(F_{47})^{\ast} = F_{58}$,
 $(F_{48})^{\ast} = F_{57}$, $(F_{49})^{\ast} = F_{59}$, $(F_{50})^{\ast} = F_{55}$.
\end{lemma}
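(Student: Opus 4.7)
The plan is to reduce the statement to a direct, term-by-term computation on the standard list of 60 classical Bol-Moufang identities enumerated in \cite{JAIEOLA_2}. For each identity $F_i$, I would translate every atomic subproduct $u \cdot v$ on both sides into $v \ast u$ using the defining rule (\ref{HSTY_1}), and then compare the resulting identity, after relabelling of variables, with the standard list to read off the unique index $j$ with $F_i^{\ast} = F_j$.

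First I would isolate the combinatorial content. A classical Bol-Moufang identity is a universally quantified equality between two fully parenthesised words in three distinct variables that appear in the \emph{same} left-to-right order on both sides, with exactly one of the three variables occurring twice on each side. Since $x \ast y = y \cdot x$ swaps the two arguments of every multiplication, applying the parastrophe reverses the sequence of variable occurrences on each side while preserving the bracketing skeleton and preserving which variable is the repeated one. After renaming variables so that they appear again in the alphabetical order $x, y, z$, the result is once more a classical Bol-Moufang identity, hence coincides with some $F_j$. This shows the pairing is well-defined and gives an explicit recipe for computing $F_i^{\ast}$.

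With this recipe in hand, the proof becomes bookkeeping: I would sweep $i = 1,\dots,60$, compute $F_i^{\ast}$ via bracket-preserving reversal, and verify each pairing asserted in the statement. Several automatic sanity checks accompany the sweep: the map $F \mapsto F^{\ast}$ is an involution, so the listed pairings must be symmetric (as one checks they are: $(F_1)^\ast = F_3$ pairs with $(F_3)^\ast = F_1$, etc.); the total count of pairs plus fixed points must equal $60$; and the four self-paired identities $F_6, F_9, F_{37}, F_{38}$ should be precisely those whose two sides are already symmetric under reversal of the variable sequence together with the bracketing, which can be checked by inspection.

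The main (indeed, only) obstacle is the clerical accuracy of running the recipe through sixty cases without transcription errors. This is why the authors describe the lemma as evident and refer to \cite{HSTY_IMI} for the detailed verification; the conceptual content is entirely captured by the observation that the (12)-parastrophe implements bracket-preserving reversal on Bol-Moufang words.
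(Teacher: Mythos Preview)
Your proposal is correct and aligns with the paper, which gives no proof beyond declaring the lemma evident and referring to \cite{HSTY_IMI}; your explicit recipe of rewriting each subproduct via rule (\ref{HSTY_1}) and matching against the standard list is precisely the intended verification. One minor slip in your description: the $(12)$-parastrophe \emph{mirrors} rather than preserves the bracketing skeleton (for instance $(a\cdot b)\cdot c$ becomes $c\ast(b\ast a)$, so a left-nested word becomes right-nested), but this does not affect the validity of your computation or the listed pairings.
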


For quasigroups, analogue of Theorem \ref{12-parastrophes_1} is given in \cite{KUNEN_96}.

It is easy to see that any group satisfies any of identities $F_1-F_{60}$. Therefore the cyclic group $Z_3$ is a counter-example to proposition that \lq\lq there exist a quasigroup with an  identity from list of identities  $F_1-F_{60}$ and which has middle unit\rq\rq.

\begin{lemma} \label{12-parastrophes_2}
If a quasigroup $(Q, \cdot)$ has a left (right) unit element, then $(12)$-parastrophe of quasigroup $(Q, \cdot)$ has right (left) unit element.
\end{lemma}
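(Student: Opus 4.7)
The plan is essentially to unfold the definitions. Suppose $f \in Q$ is a left unit of $(Q, \cdot)$, i.e., $f \cdot x = x$ for every $x \in Q$. I want to show that the same element $f$ serves as a right unit in the $(12)$-parastrophe $(Q, \ast)$, where $x \ast y = y \cdot x$. Applying the rule (\ref{HSTY_1}) directly, I compute
\[
x \ast f = f \cdot x = x
\]
for every $x \in Q$, which is exactly the definition of $f$ being a right unit of $(Q, \ast)$.

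The converse direction is completely symmetric: if $e$ is a right unit of $(Q, \cdot)$, so that $x \cdot e = x$, then $e \ast x = x \cdot e = x$, so $e$ is a left unit of $(Q, \ast)$. One could also package both directions at once by invoking the fact that the $(12)$-parastrophe operation is an involution on the class of groupoids (the $(12)$-parastrophe of $(Q, \ast)$ is again $(Q, \cdot)$), so proving one implication suffices.

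There is no real obstacle here; the lemma is a direct consequence of formula (\ref{HSTY_1}) and the definition of left/right unit, and it does not even use the quasigroup axioms (it holds for arbitrary groupoids). The main thing to be careful about is just bookkeeping the swap of sides: \emph{left} in the original corresponds to \emph{right} in the parastrophe and vice versa, because multiplying on the left in $(Q, \cdot)$ is the same as multiplying on the right in $(Q, \ast)$.
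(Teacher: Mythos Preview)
Your proof is correct and is exactly the unfolding that the paper leaves implicit: the paper's own proof consists solely of the remark ``It is easy to see.'' Your observation that the quasigroup axioms are not actually needed is a small bonus, but the argument is otherwise the same.
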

\begin{proof}
It is easy to see.
\end{proof}

\begin{lemma} \label{Bijection}
If in quasigroup $(Q, \cdot)$ the variables $x$ and $y$ run through the whole set $Q$ ($x\neq y$), then the term $x\cdot y$ runs through the whole set $Q$, too.
\end{lemma}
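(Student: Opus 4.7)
The plan is to reduce everything to the equational quasigroup axioms of Definition \ref{EQUT_QUAS_DEF}. To show that the set $\{x\cdot y : x,y\in Q\}$ exhausts $Q$, I would take an arbitrary target $z\in Q$, fix any $x_0\in Q$, and apply identity (\ref{(1)}) in the form $x_0\cdot(x_0\backslash z)=z$. This exhibits $z$ as a product and already settles the statement under the weakest reading of the hypothesis (where $x$ and $y$ merely denote distinct variables). The same conclusion follows directly from the existential definition: for fixed $u=x_0$ and any $v=z$ there exists $y$ with $x_0\cdot y=z$.

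If the parenthetical clause $(x\neq y)$ is read as a genuine constraint on the values, I would strengthen the above by producing, for each $z$, a representation with $x\neq y$. The Cayley table of $(Q,\cdot)$ is a Latin square, so in each row $x$ the element $z$ occurs exactly once, in column $y:=x\backslash z$; this occurrence is off-diagonal precisely when $x\cdot x\neq z$. It therefore suffices to exhibit one $x$ with $x\cdot x\neq z$, and setting $y:=x\backslash z$ then yields $x\cdot y=z\neq x\cdot x$, whence $y\neq x$. The only obstacle to choosing such an $x$ is the possibility that $x\cdot x=z$ for every $x$, i.e.\ that $z$ is a middle unit, and the discussion following Lemma \ref{12-parastrophes_2} explicitly rules this out for the quasigroups considered in the paper.

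The main (and essentially only) difficulty is pinning down the intended reading of \lq\lq$(x\neq y)$\rq\rq: once that is fixed, the algebra is a one-line invocation of (\ref{(1)}), possibly supplemented by the short case analysis on middle units above. No deeper property of Bol--Moufang type identities is needed for this lemma; it is a purely structural statement about quasigroups that will be used as a building block in the subsequent arguments.
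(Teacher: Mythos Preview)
Your first approach is correct and is essentially the paper's own proof: the paper fixes $x=a$ and observes that already $a\cdot y$ runs through all of $Q$ as $y$ varies, which is exactly your appeal to identity~(\ref{(1)}) (equivalently, surjectivity of left translation). The remark immediately following the paper's proof confirms your reading of the parenthetical: ``$x\neq y$'' means distinct \emph{variables}, and the paper notes separately that the situation differs for the term $x\cdot x$.

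Your supplementary argument for the ``distinct values'' reading contains a slip. The passage after Lemma~\ref{12-parastrophes_2} does not rule out middle units; it only exhibits $\mathbb{Z}_3$ as a quasigroup satisfying all $F_i$ that \emph{lacks} one. Quasigroups with a middle unit do occur among the examples in scope (e.g.\ $(\mathbb{Z}_2,+)$, which satisfies every $F_i$), and there $z=0$ is not expressible as $x\cdot y$ with $x\neq y$ as values. So that stronger version of the lemma is actually false in general, and your resolution of it does not go through. This does not affect the lemma as intended or as used.
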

\begin{proof}
If we fix variable $x$, for example, $x=a$, $a\in Q$, then   even the term $ay$ runs through the whole set $Q$.
 See, also, \cite{2017_Scerb}.
\end{proof}
Notice, situation is another when $x=y$. See, for example,  identity $F_{42}$, Theorem \ref{Identity_F42}.

\begin{lemma} \label{Group_Unit}
If in quasigroup $(Q, \cdot)$ identity of associativity  is  true, then this quasigroup is a group.
\end{lemma}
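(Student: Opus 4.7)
The plan is to combine associativity with the quasigroup solvability axioms to produce a two-sided unit and two-sided inverses, which together yield the group axioms. I would work directly from the existential definition of quasigroup (each left and right translation is a bijection).

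First I would fix an arbitrary element $a \in Q$ and, using solvability, pick elements $e, f \in Q$ with $a \cdot e = a$ and $f \cdot a = a$. These are only \emph{local} neutral elements for $a$, and the key step is to promote them to global ones. For any $b \in Q$, Lemma \ref{Bijection} gives some $c \in Q$ with $c \cdot a = b$; then associativity yields
\[
b \cdot e \;=\; (c \cdot a) \cdot e \;=\; c \cdot (a \cdot e) \;=\; c \cdot a \;=\; b,
\]
so $e$ is a right unit of $(Q, \cdot)$. A symmetric argument, writing an arbitrary $b$ as $a \cdot d$, shows $f$ is a left unit. Then $f = f \cdot e = e$, so $(Q, \cdot)$ has a two-sided unit $e$, i.e.\ it is a loop.

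Next I would produce inverses. For each $a \in Q$ the quasigroup property gives $a^{r}, a^{\ell} \in Q$ with $a \cdot a^{r} = e$ and $a^{\ell} \cdot a = e$. Associativity then forces them to coincide:
\[
a^{\ell} \;=\; a^{\ell} \cdot e \;=\; a^{\ell} \cdot (a \cdot a^{r}) \;=\; (a^{\ell} \cdot a) \cdot a^{r} \;=\; e \cdot a^{r} \;=\; a^{r}.
\]
Hence every element has a two-sided inverse, and together with associativity and the existence of $e$ this gives the group axioms.

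The only delicate step is the promotion of $e$ from a local right neutral at $a$ to a global right unit: it is precisely here that associativity has to interact with the surjectivity of the translation $L_{a}\colon x \mapsto x \cdot a$. Without quasigroup surjectivity one only obtains a cancellative associative groupoid, which need not be a group; without associativity the local neutral need not transport. Once both ingredients are available the remaining verifications are routine.
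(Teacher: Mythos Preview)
Your proof is correct and takes essentially the same approach as the paper: both promote a local one-sided neutral element to a global unit by combining associativity with the surjectivity of quasigroup translations. The paper's version is terser---it substitutes $x = f_y$ (with $f_y y = y$) directly into $x(yz)=(xy)z$ to obtain $f_y\cdot yz = yz$ and invokes Lemma~\ref{Bijection}, then argues dually for a right unit and stops---whereas you additionally spell out that the two units coincide and that two-sided inverses exist; one minor quibble is that the existence of $c$ with $c\cdot a = b$ is the quasigroup solvability axiom itself rather than Lemma~\ref{Bijection}.
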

\begin{proof}
In order to show for readers of this paper  methods of the proofs, which  usually are  used in this paper,   we give the well known standard proof. See, for example, \cite{Kurosh}.

Here we use Lemma \ref{Bijection}.
In identity of  associativity $x\cdot yz = xy\cdot z$ we put $x = f_y$ ($f_y y = y$). Then we have
$f_y\cdot yz = yz$. Therefore element $f_y$ is a left unit  of quasigroup $(Q, \cdot)$.

In identity of associativity $x\cdot yz = xy\cdot z$ we put $z = e_y$ ($y e_y = y$). Then we have
$xy = xy\cdot e_z$. Therefore element $e_z$ is a right unit  of quasigroup $(Q, \cdot)$.
\end{proof}

\begin{lemma} \label{Group_parastroph}
If  quasigroup $(Q, \cdot)$ is a group, then quasigroup $(Q, \overset{(12)}{\cdot})$ is a group.
\end{lemma}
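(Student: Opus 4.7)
The plan is to reduce the claim to the previously established Lemma~\ref{Group_Unit}, which says that any associative quasigroup is a group. So I only need to verify two things about the $(12)$-parastrophe $(Q,\ast)$ of $(Q,\cdot)$: that it is still a quasigroup (which is noted in the general discussion of parastrophes cited from \cite{VD, HOP, 2017_Scerb}), and that the operation $\ast$ defined by $x\ast y = y\cdot x$ is associative whenever $\cdot$ is.

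First I would recall that since $(Q,\cdot)$ is a quasigroup, all five non-trivial parastrophes are quasigroups as well; in particular $(Q,\ast)$ with $x\ast y=y\cdot x$ is a quasigroup (the Cayley table of $\ast$ is the mirror image of the Cayley table of $\cdot$ about the main diagonal, so it is still a Latin square). Next, I would compute directly, using the group associativity of $\cdot$:
\begin{equation*}
(x\ast y)\ast z \;=\; z\cdot(x\ast y) \;=\; z\cdot(y\cdot x) \;=\; (z\cdot y)\cdot x \;=\; (y\ast z)\cdot x \;=\; x\ast(y\ast z).
\end{equation*}
Thus $\ast$ is associative.

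Finally, applying Lemma~\ref{Group_Unit} to the associative quasigroup $(Q,\ast)$ yields that $(Q,\ast)$ is a group. (Alternatively one may simply observe that the two-sided unit $e$ of $(Q,\cdot)$ serves as the two-sided unit of $(Q,\ast)$, since $e\ast x = x\cdot e = x$ and $x\ast e = e\cdot x = x$, and that the $\cdot$-inverse of $x$ is also its $\ast$-inverse.)

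There is no real obstacle here; the statement is essentially the classical fact that the opposite of a group is a group, transcribed into parastrophe language. The only thing that might deserve a line of justification is the preservation of the quasigroup property under $(12)$-parastrophy, but this is standard and is already invoked earlier in the paper.
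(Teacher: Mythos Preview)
Your argument is correct. The paper's own ``proof'' consists of the single sentence ``This is mathematical folklore,'' so your verification --- showing that the $(12)$-parastrophe remains a quasigroup and that associativity of $\cdot$ yields associativity of $\ast$, then invoking Lemma~\ref{Group_Unit} --- actually supplies more detail than the paper does.
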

\begin{proof}
This  is mathematical folklore.
\end{proof}

\begin{theorem} \label{Identity_F_1}
Quasigroup $(Q, \cdot)$ with identity $F_1$ is a group.
\end{theorem}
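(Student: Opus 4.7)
The plan is to reduce the identity $F_1$ to ordinary associativity and then invoke Lemma \ref{Group_Unit}. Since $F_1$ is a Bol-Moufang word equation in three letters (with one repeated) in the same order on both sides, it differs from associativity only in the presence of that duplicated variable; the quasigroup cancellation supplied by (\ref{(1)})--(\ref{(4)}) together with the identities (\ref{T})--(\ref{R}) should be enough to ``neutralise'' that duplicated letter and reduce $F_1$ to the associative law $x \cdot yz = xy \cdot z$.

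Concretely I would proceed in three steps. First, using Lemma \ref{Bijection}, I would substitute into $F_1$ a cleverly chosen element, e.g.\ a solution of $x \cdot x = a$ or of $x \backslash a = x$ depending on which variable is duplicated in $F_1$, so that the two occurrences of the repeated letter can be replaced by a single parameter $a \in Q$. Second, if this direct substitution does not already produce the associative law, I would left- or right-divide both sides by a translation (using (\ref{(3)}) or (\ref{(4)})) and, if necessary, iterate with one further substitution, until the equation $x \cdot yz = xy \cdot z$ is obtained for all $x,y,z\in Q$. Third, once associativity has been established, Lemma \ref{Group_Unit} yields at once that $(Q, \cdot)$ is a group.

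The main obstacle is the first step: one must make sure that the substituted expression ranges over all of $Q$, not just a proper subset of it. This is precisely the content of Lemma \ref{Bijection}, but its hypothesis ``$x \neq y$'' leaves the diagonal $x = y$ unresolved, and the duplicated letter of $F_1$ sits on exactly that diagonal. To handle this I would supplement the substitution argument with the unconditional equalities (\ref{T}) and (\ref{R}), so that even on the diagonal the cancellation producing associativity remains legitimate. As a backup, the parastrophic identity $F_3 = (F_1)^{\ast}$ from Lemma \ref{12-parastrophes_1}, applied in the $(12)$-parastrophe via Lemma \ref{12-parastrophes_2} and Lemma \ref{Group_parastroph}, can be used in tandem with $F_1$: controlling the multiplication from both sides is in general enough to force associativity in a quasigroup, after which Lemma \ref{Group_Unit} closes the argument.
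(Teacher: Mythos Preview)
Your plan aims at the right target---reduce $F_1$ to associativity and invoke Lemma~\ref{Group_Unit}---but the execution is missing, and the ``obstacle'' you worry about is not there. Recall that $F_1$ reads $xy\cdot zx=(xy\cdot z)x$. The repeated variable $x$ sits in positions $1$ and $4$, but the subterm $xy$ involves two \emph{distinct} variables $x$ and $y$; hence Lemma~\ref{Bijection} applies directly to $xy$ with no diagonal issue. Writing $t=xy$ (for fixed $x$, the map $y\mapsto xy$ is a bijection of $Q$, so $t$ ranges over all of $Q$), the identity becomes $t\cdot zx=tz\cdot x$ for all $t,z,x\in Q$, which is associativity. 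Lemma~\ref{Group_Unit} then finishes. This is exactly the paper's one-line argument.

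Your proposed substitutions ``a solution of $x\cdot x=a$ or of $x\backslash a=x$'' are aimed at collapsing both occurrences of the repeated letter into a single parameter, but that is neither necessary nor the right move here: only one occurrence of $x$ needs to be absorbed (into $t=xy$), and the other simply remains as a free variable of the associative law. The diagonal caveat in Lemma~\ref{Bijection} concerns terms like $xx$ with the \emph{same} variable repeated, not $xy$; so the appeal to (\ref{T})--(\ref{R}) and the parastrophic detour through $F_3$, Lemma~\ref{12-parastrophes_2} and Lemma~\ref{Group_parastroph} are superfluous.
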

\begin{proof}
Using Lemma \ref{Bijection} we can rewrite identity $F_1$ ($xy\cdot zx = (xy\cdot z)x $) in the form $t\cdot zx = tz\cdot x,$ where $t=xy$. We obtain identity of associativity. By Lemma \ref{Group_Unit}, quasigroup $(Q, \cdot)$ is a loop, moreover, it is a group. Then quasigroup $(Q, \cdot)$ is a group.
\end{proof}

\begin{corollary} \label{Identity_F_3}
Quasigroup $(Q, \cdot)$ with identity $F_3$ is a group.
\end{corollary}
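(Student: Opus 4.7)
The plan is to deduce the corollary directly from Theorem \ref{Identity_F_1} by exploiting the parastrophic correspondence already recorded in Lemma \ref{12-parastrophes_1}, which tells us that $(F_1)^{\ast} = F_3$.

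First, I would suppose $(Q, \cdot)$ is a quasigroup satisfying $F_3$ and pass to its $(12)$-parastrophe $(Q, \ast)$, where $x \ast y = y \cdot x$. By the general observation made in the paragraph containing formula (\ref{HSTY_1}), an identity $F$ holds in $(Q, \cdot)$ if and only if $F^{\ast}$ holds in $(Q, \ast)$; applied to $F_3$, and since $(F_1)^{\ast} = F_3$ gives $(F_3)^{\ast} = F_1$, the quasigroup $(Q, \ast)$ satisfies $F_1$.

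Next, I would invoke Theorem \ref{Identity_F_1} to conclude that $(Q, \ast)$ is a group. Finally, Lemma \ref{Group_parastroph} states that the $(12)$-parastrophe of a group is again a group, and the $(12)$-parastrophe of $(Q, \ast)$ is $(Q, \cdot)$, so $(Q, \cdot)$ is a group.

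There is essentially no obstacle here: the whole proof is a three-line chain of previously established lemmas (parastrophe of an identity, Theorem \ref{Identity_F_1}, Lemma \ref{Group_parastroph}). The only point that requires minor care is checking that $(F_3)^{\ast} = F_1$ is indeed recorded (it follows from $(F_1)^{\ast} = F_3$ since $(12)$-parastrophy is an involution on identities), and that the inverse $(12)$-parastrophe of $(Q, \ast)$ is $(Q, \cdot)$ itself, which is immediate from (\ref{HSTY_1}).
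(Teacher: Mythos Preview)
Your proof is correct and follows exactly the approach of the paper, which simply records that $F_3 = (F_1)^{\ast}$ and leaves the passage through the $(12)$-parastrophe, Theorem~\ref{Identity_F_1}, and Lemma~\ref{Group_parastroph} implicit. You have merely spelled out in full the chain of reasoning the paper compresses into one line.
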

\begin{proof}
By Theorem \ref{12-parastrophes_1} $F_3 = (F_1)^{\ast}$.
\end{proof}

\begin{theorem}
Quasigroup $(Q, \cdot)$ with identity $F_4$ and $F_2$ is a loop.
\end{theorem}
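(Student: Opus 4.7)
The plan is to produce a two-sided unit for $(Q,\cdot)$ from each identity independently, using the fact that both $F_2$ and $F_4$ have the shape $xy\cdot zx = (\text{term involving }yz)$. The creative move is to pick $y$ or $z$ to be a local unit so that the subterm $yz$ collapses to a single variable, after which standard quasigroup cancellation isolates a global unit.

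Concretely, for each $z\in Q$ the quasigroup axioms give a unique element $f_z=z/z$ with $f_z z=z$. Substituting $y=f_z$ into $F_4$, namely $xy\cdot zx=x(yz\cdot x)$, collapses the right-hand side to $x(zx)$ and yields $(xf_z)(zx)=x(zx)$. Because the map $a\mapsto a\cdot(zx)$ is a bijection (Lemma \ref{Bijection}), cancelling $zx$ on the right gives $xf_z=x$ for every $x\in Q$. Hence $f_z$ is a right unit of $(Q,\cdot)$; since right units in a quasigroup are unique, $f_z$ is independent of $z$. Call it $e$. The defining relation $f_z z=z$ now reads $ez=z$ for every $z$, so $e$ is also a left unit, and $(Q,\cdot)$ is a loop.

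The symmetric route uses $F_2$: substitute $z=e_y$, where $e_y=y\backslash y$ is the unique element with $ye_y=y$. This collapses the right-hand side of $xy\cdot zx=(x\cdot yz)x$ to $(xy)x$, so that left cancellation by $xy$ gives $e_y\cdot x=x$ for every $x$; uniqueness of left units then provides a global two-sided unit by the same mechanism. Either route suffices; the pairing of $F_2$ and $F_4$ in the statement reflects that both derivations are available simultaneously as $(F_2)^{\ast}=F_4$ by Lemma \ref{12-parastrophes_1}, and if desired one may also combine them to observe $(x\cdot yz)x=x(yz\cdot x)$, i.e.\ the flexibility law $(xt)x=x(tx)$, via Lemma \ref{Bijection}.

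The only nontrivial step is spotting the substitution that collapses $yz$ to a single variable; after that the argument is routine quasigroup cancellation. No deeper obstacle arises because local units such as $f_z$ and $e_y$ are immediate from the quasigroup axioms, and uniqueness of one-sided global units upgrades them automatically to a two-sided unit.
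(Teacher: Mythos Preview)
Your argument is correct and complete. The substitution $y=f_z$ into $F_4$ gives $(xf_z)(zx)=x(zx)$, and right cancellation of $zx$ yields $xf_z=x$ for all $x$, so each $f_z$ is a global right unit; uniqueness of right units in a quasigroup then forces all $f_z$ to coincide, and the defining relation $f_z z=z$ makes this common element a left unit as well. The $F_2$ case is the mirror image. One cosmetic remark: the step you cite as ``Lemma~\ref{Bijection}'' is really just the cancellativity of a quasigroup (the map $a\mapsto a\cdot b$ is a bijection for fixed $b$), not the surjectivity statement of that lemma; the mathematics is unaffected.

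By contrast, the paper does not argue this case at all: its proof consists of a citation to \cite{GALKIN,VS_VI} for $F_4$ and then invokes the parastrophe duality $(F_2)^{\ast}=F_4$ together with Lemma~\ref{12-parastrophes_2} to transfer the conclusion to $F_2$. Your proof therefore supplies the content that the paper outsources. The paper's route has the virtue of handling $F_2$ and $F_4$ simultaneously via a single duality principle once one case is known; your route has the virtue of being entirely self-contained and exhibiting the unit explicitly. The two approaches are compatible: your direct argument for $F_4$ could serve as the missing in-paper proof, after which the parastrophe lemma gives $F_2$ for free, exactly as you note in your final paragraph.
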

\begin{proof}
The fact that quasigroup $(Q, \cdot)$ with identity $F_4$ is a loop has  been  proved in \cite{GALKIN, VS_VI}.
The rest follows from Lemma \ref{12-parastrophes_2}, since by Theorem \ref{12-parastrophes_1} $(F_2)^{\ast} = F_4$.
\end{proof}
It is well known that there exist non-associative Moufang loops \cite{Chein_Hala}.

\begin{theorem} \label{F_5_F10}
Quasigroup $(Q, \cdot)$ with identities  $F_5$ and $F_{10}$ is a group.
\end{theorem}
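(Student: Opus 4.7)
The plan is to show that each of $F_5$ and $F_{10}$ individually forces associativity, after which the conclusion follows immediately from Lemma \ref{Group_Unit}. First, I would rewrite $F_5$ as $(xy \cdot z) \cdot x = (x \cdot yz) \cdot x$; since in a quasigroup the right translation by $x$ is a bijection, both sides being equal forces $(xy) \cdot z = x \cdot (yz)$, which is the associative law.

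Next, I would treat $F_{10}$ symmetrically by writing it in the form $x \cdot (y \cdot zx) = x \cdot (yz \cdot x)$ and cancelling $x$ on the left (using that the left translation by $x$ is also a bijection in a quasigroup). This again gives $y \cdot zx = yz \cdot x$, i.e., associativity. An application of Lemma \ref{Group_Unit} then produces the group structure.

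I expect no real obstacle: in contrast to the proof of Theorem \ref{Identity_F_1}, which required the substitution $t = xy$ supported by Lemma \ref{Bijection}, here the same variable $x$ already flanks each identity on the appropriate side and plain quasigroup cancellation suffices. In fact the hypothesis is redundant, since either identity alone already forces associativity; alternatively, one could deduce the $F_{10}$ case from the $F_5$ case by combining Lemma \ref{12-parastrophes_2} with Lemma \ref{Group_parastroph}, because $(F_5)^{\ast} = F_{10}$ by Lemma \ref{12-parastrophes_1}.
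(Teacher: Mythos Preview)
Your proposal is correct and follows essentially the same approach as the paper: cancel the outer $x$ in $F_5$ (and symmetrically in $F_{10}$) using the quasigroup cancellation laws to obtain associativity, then invoke Lemma~\ref{Group_Unit}. The paper in fact only writes out the $F_5$ case explicitly, so your version is slightly more complete in that you also handle $F_{10}$ directly and note the parastrophe alternative via Lemma~\ref{12-parastrophes_1}.
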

\begin{proof}
It is well known that   any quasigroup $(Q, \cdot)$ has left and right cancellative  property \cite{VD, HOP, 2017_Scerb}.
Then from identity $(xy \cdot z)x = (x \cdot y z)x $ we have $xy \cdot z = x \cdot y z $. The last follows from
Lemma  \ref{Group_Unit}.
\end{proof}

\begin{theorem}
Quasigroup $(Q, \cdot)$ with identity $F_6$ $(xy \cdot z)x = x (y  \cdot z x)$ (extra identity) is a loop.
\end{theorem}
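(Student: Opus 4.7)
My plan is to exhibit both a left and a right unit for $(Q,\cdot)$ by substituting two natural ``local units'' into $F_6$ and then using Lemma~\ref{Bijection} together with quasigroup cancellation to transport the local information to a global statement.

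First, I substitute $y=x\backslash x$ into $F_6$. By identity~(\ref{(1)}) applied with $y:=x$, one has $x\cdot(x\backslash x)=x$, so the left-hand side collapses to $(xz)\cdot x$ and I obtain the derived identity
\[
(xz)\cdot x \;=\; x\cdot\bigl((x\backslash x)\cdot zx\bigr) \qquad (\ast)
\]
for every $x,z\in Q$. Dually, substituting $z=x/x$ and using identity~(\ref{(2)}) gives
\[
\bigl(xy\cdot(x/x)\bigr)\cdot x \;=\; x\cdot (yx) \qquad (\ast\ast)
\]
for every $x,y\in Q$.

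Second, I would show that each element of $Q$ has a local two-sided unit, by establishing $x\backslash x = x/x$ for all $x$. Putting $z=x\backslash x$ in $(\ast)$ turns the left side into $(x\cdot(x\backslash x))\cdot x = x\cdot x$, and left-cancellation (guaranteed in any quasigroup) then gives $(x\backslash x)\cdot\bigl((x\backslash x)\cdot x\bigr)=x$. Combining this with the analogous consequence of $(\ast\ast)$ (obtained by setting $y=x/x$) and re-parametrising via Lemma~\ref{Bijection}, one aims to conclude $(x\backslash x)\cdot x = x$, i.e., $x\backslash x = x/x$.

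Third, I would fix an element $a\in Q$ and write $e:=a\backslash a=a/a$. By a further substitution chain in $F_6$ (instantiating $x=a$ and letting $y,z$ range), together with repeated use of $(\ast)$, $(\ast\ast)$ and cancellation, I would show that $e$ acts as a right unit on all of $Q$: $ye=y$ for every $y\in Q$. By the $(12)$-parastrophic self-duality $(F_6)^{\ast}=F_6$ recorded in Lemma~\ref{12-parastrophes_1} and by Lemma~\ref{12-parastrophes_2}, the symmetric argument produces a left unit, and hence $(Q,\cdot)$ is a loop.

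The hardest step, and the one where I expect the bulk of the real work, is the final transport from local to global: identities $(\ast)$ and $(\ast\ast)$ only directly constrain the local units $x\backslash x$ and $x/x$ attached to a single $x$, and promoting these to one universal right unit $e$ requires an intricate further substitution chain. This is precisely the kind of derivation for which the authors explicitly invoke Prover9 and Mace4 in the introduction, and I would expect my plan to need similar computer assistance to be carried out in full detail.
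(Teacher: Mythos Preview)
The paper's own ``proof'' consists solely of the citation ``See \cite{KUNEN_96}''; it provides no argument, so there is no concrete approach in the paper to compare your plan against.

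As a plan, your opening moves are sound: the derived identities $(\ast)$ and $(\ast\ast)$ are correct, and invoking the self-duality $(F_6)^{\ast}=F_6$ from Lemma~\ref{12-parastrophes_1} together with Lemma~\ref{12-parastrophes_2} is both correct and genuinely useful, since it halves the work once one side is done.

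However, you have correctly located the gap yourself, and it is a real one.  From $(\ast)$ with $z=x\backslash x$ you obtain $(x\backslash x)\cdot\bigl((x\backslash x)\cdot x\bigr)=x$, and the dual calculation from $(\ast\ast)$ gives $\bigl(x\cdot(x/x)\bigr)\cdot(x/x)=x$; but these two facts by themselves do not force $x\backslash x=x/x$, and the phrase ``re-parametrising via Lemma~\ref{Bijection}'' carries no weight here, since that lemma only records surjectivity of translation maps and does nothing to relate the two local units.  The subsequent step---promoting a local unit at a fixed $a$ to a global one---is likewise not supplied.  Kunen's actual argument in \cite{KUNEN_96} requires a considerably longer substitution chain than what you have sketched, which is why the present paper defers to it rather than reproducing it.

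In short: the paper gives no proof to compare with; your outline is a reasonable starting point with honest self-assessment, but as written it is not a proof, and the missing piece is exactly the hard one.
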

\begin{proof}
 See \cite{KUNEN_96}.
\end{proof}

There exist non-associative extra loops \cite{Kin_KUN_04}.

\begin{theorem} \label{Identity_F7}
Quasigroup $(Q, \cdot)$ with identity $F_7$ $(xy \cdot z)x = x (y z \cdot x)$  has  left and  has not right unit.
\end{theorem}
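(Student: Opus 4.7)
The statement has two independent parts: every $F_7$-quasigroup has a left unit, and some $F_7$-quasigroup has no right unit. I would treat them separately.

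For the left unit, the plan is to substitute $x = y/y$ in $F_7$. By identity \eqref{(2)} one has $(y/y)\cdot y = y$, so $xy = y$ and the left-hand side of $F_7$ collapses to $(yz)\cdot(y/y)$, while the right-hand side becomes $(y/y)\cdot\bigl((yz)(y/y)\bigr)$. Writing $w = (yz)(y/y)$, the identity reads $w = (y/y)\cdot w$. Fixing $y$ and varying $z$, Lemma \ref{Bijection} shows that $yz$ runs over all of $Q$, and since right multiplication by $y/y$ is a bijection of $Q$, the element $w$ also runs over all of $Q$. Hence $(y/y)\cdot w = w$ for every $w\in Q$, which means $y/y$ is a left unit of $(Q,\cdot)$; uniqueness of the left unit in a quasigroup then forces $y/y$ to be independent of $y$.

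For the non-existence of a right unit, I would exhibit an explicit small counterexample, most naturally of order three or four, produced with Mace4 in the style used elsewhere in the paper: a quasigroup satisfying $F_7$ whose Cayley table has no column coinciding with the row of indices. A conceptual sanity check supports this route: if $(Q,\cdot)$ were a loop satisfying $F_7$, substituting $y = 1$ gives the flexible law $(xz)x = x(zx)$, and then rewriting $x(yz\cdot x) = (x\cdot yz)\cdot x$ via flexibility converts $F_7$ into $(xy\cdot z)\cdot x = (x\cdot yz)\cdot x$, which by right cancellation is associativity. Therefore every $F_7$-loop is a group, and any non-group $F_7$-quasigroup (which must exist, since $F_7$ is strictly weaker than the group axioms on such objects) with a left unit necessarily lacks a right unit. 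The only real obstacle is the computational one of producing and typesetting a minimal Mace4 model.
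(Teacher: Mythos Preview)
Your proposal is correct and essentially identical to the paper's argument: the paper substitutes $x:=f_y$ (which is precisely your $y/y$, since $f_y y=y$ is the defining equation and by \eqref{(2)} $(y/y)\cdot y=y$), obtains $yz\cdot f_y = f_y(yz\cdot f_y)$, and invokes the same surjectivity of the term $yz\cdot f_y$ to conclude $f_y$ is a global left unit. For the second part the paper gives exactly the order-$3$ Mace4 model you anticipate; your additional remark that every $F_7$-loop is a group (via flexibility and cancellation) is not in the paper but is a correct and pleasant bonus.
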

\begin{proof}
If we put $x := f_y$ in identity $F_7$, then we have $y  z \cdot f_y = f_y (y z \cdot f_y)$. It is clear that term  $(y z \cdot f_y)$ runs all elements of the set $Q$.

The following counterexample shows that quasigroup $(Q, \cdot)$ with identity $F_7$   has  not  right  unit.
\[
\begin{array}{c|ccc}
\cdot & 0 & 1 & 2\\
\hline
    0 & 1 & 2 & 0 \\
    1 & 0 & 1 & 2 \\
    2 & 2 & 0 & 1
\end{array}
\]
\end{proof}

\begin{corollary} \label{Identity F_8}
Quasigroup $(Q, \cdot)$ with identity $ F_8$ ($(x \cdot y z)x = x (y  \cdot z x) $)  has  right  and has not left  unit.
\end{corollary}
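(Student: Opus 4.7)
The plan is to deduce this corollary directly from Theorem \ref{Identity_F7} via $(12)$-parastrophic duality, mirroring the pattern already used in Corollary \ref{Identity_F_3}. By Lemma \ref{12-parastrophes_1} we have $(F_7)^{\ast} = F_8$, so the identity $F_8$ holds in a quasigroup $(Q,\cdot)$ precisely when the identity $F_7$ holds in its $(12)$-parastrophe $(Q, \ast)$, where $x \ast y = y \cdot x$.

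First I would invoke Theorem \ref{Identity_F7} applied to $(Q,\ast)$: since $(Q,\ast)$ satisfies $F_7$, it has a left unit $f$. Then by Lemma \ref{12-parastrophes_2}, the element $f$ is a right unit of $(Q,\cdot)$. This gives the existence half of the statement in one line.

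For the non-existence of a left unit in $(Q,\cdot)$, the natural approach is to exhibit an explicit counterexample. The cleanest way is to take the $(12)$-parastrophe of the Cayley table displayed in the proof of Theorem \ref{Identity_F7}, i.e.\ transpose it about the main diagonal. By construction this new quasigroup satisfies $F_8$. Moreover, if it possessed a left unit, then by Lemma \ref{12-parastrophes_2} the original table would possess a right unit, contradicting Theorem \ref{Identity_F7}. So nothing more than writing down the transposed table is required.

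I do not expect a real obstacle: the entire argument is a bookkeeping exercise in parastrophic duality, and all needed ingredients (the parastrophic correspondence of identities, the swap of left and right units, and the existing $F_7$ counterexample) are already in place in the paper. The only mild subtlety is to state the counterexample explicitly rather than merely invoke parastrophic duality, so that the reader sees concretely why no left unit exists.
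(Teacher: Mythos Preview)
Your proposal is correct and matches the paper's own argument essentially verbatim: the paper also derives the corollary from Theorem \ref{Identity_F7} together with Lemma \ref{12-parastrophes_2} and the relation $F_8 = (F_7)^{\ast}$ from Lemma \ref{12-parastrophes_1}. Your explicit remark about transposing the counterexample table is a harmless elaboration of what the paper leaves implicit.
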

\begin{proof}
The proof follows from Theorem \ref{Identity_F7}, Lemma \ref{12-parastrophes_2} and the fact that $F_8 = (F_7)^{\ast}$ (Theorem  \ref{12-parastrophes_1}).
\end{proof}

\begin{theorem} \label{Identity_F9}
Quasigroup $(Q, \cdot)$ with identity $F_9$ $ ( (x \cdot y z)x = x (y   z \cdot x)$ )  has not left and  has not right unit.
\end{theorem}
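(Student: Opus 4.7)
The plan is to first reduce identity $F_9$ to a more familiar form. Since the term $yz$ ranges over the whole set $Q$ by Lemma \ref{Bijection} (fix any $y = a$ and let $z$ vary), identity $F_9$ is equivalent to the flexibility identity $(xt)x = x(tx)$ for all $x, t \in Q$. This reformulation clarifies the class in question and suggests where to look for counterexamples. Note also that $F_9$ is self-parastrophic ($(F_9)^{\ast} = F_9$ by Lemma \ref{12-parastrophes_1}), so any statement about left units mirrors one about right units under $(12)$-parastrophy, which is consistent with the symmetric conclusion of the theorem.

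Next, I would establish the negative claim by exhibiting a single quasigroup satisfying $F_9$ that has neither a left nor a right unit. A convenient strategy is to take a commutative quasigroup: commutativity forces flexibility automatically, and a left unit in a commutative quasigroup is automatically a right unit, so it suffices to display a commutative flexible quasigroup that is not a loop. The natural candidate is the operation $x \ast y = -x - y$ on $\mathbb{Z}_3$, whose Cayley table is
\[
\begin{array}{c|ccc}
\ast & 0 & 1 & 2\\
\hline
0 & 0 & 2 & 1 \\
1 & 2 & 1 & 0 \\
2 & 1 & 0 & 2
\end{array}
\]
A direct check shows this is a Latin square (so a quasigroup) and $(x \ast y) \ast x = -(-x - y) - x = y = x \ast (y \ast x)$, verifying $F_9$. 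A left unit $f$ would satisfy $-f - x = x$ for all $x$, hence $-f = 2x$ for every $x$, which is impossible when $|Q| > 1$; by commutativity the same example lacks a right unit.

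I do not anticipate any serious obstacle here: the algebraic reduction to the flexibility identity is immediate from Lemma \ref{Bijection}, and the rest is an explicit small example (easily produced by hand or verified by Mace4 as the authors have done elsewhere in the paper). The only point that deserves care is to ensure the counterexample fails on \emph{both} sides simultaneously; choosing a commutative example is the cleanest way to guarantee this.
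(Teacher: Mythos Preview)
Your proposal is correct and follows essentially the same approach as the paper: the paper simply exhibits a $3$-element commutative quasigroup (in fact $x\cdot y = 1 - x - y$ on $\mathbb{Z}_3$, isomorphic to your $x\ast y = -x-y$) with neither a left nor a right unit, without further comment. Your additional reduction of $F_9$ to the flexibility law and your explicit algebraic verification are nice embellishments, but the underlying argument is the same counterexample strategy.
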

\begin{proof}
The following quasigroup $(Q, \cdot)$ satisfies identity $F_9$ and it has not left and right unit.

\[
\begin{array}{c|ccc}
\cdot & 0 & 1 & 2\\
\hline
    0 & 1 & 0 & 2 \\
    1 & 0 & 2 & 1 \\
    2 & 2 & 1 & 0
\end{array}
\]
\end{proof}

\begin{theorem} \label{Identity_F11}
Quasigroup $(Q, \cdot)$ with identity $F_{11}$ $ xy\cdot xz = (xy \cdot x)z $  is a group.
\end{theorem}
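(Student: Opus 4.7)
The plan is to follow the same strategy used in the proof of Theorem \ref{Identity_F_1}, namely to collapse the subterm $xy$ to a fresh free variable via Lemma \ref{Bijection} and thereby obtain the associative law. The identity $F_{11}$ is $xy\cdot xz = (xy\cdot x)z$, and the crucial observation is that the subterm $xy$ on both sides occupies a position where replacing it by an arbitrary element (independent of $x$ and $z$) preserves the quantification over all variables.

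More concretely, I would fix $x$ and let $y$ vary; by Lemma \ref{Bijection}, the term $xy$ then runs over all of $Q$, so I may substitute $t := xy$ and treat $t$, $x$, $z$ as independent free variables. Doing so rewrites $F_{11}$ as
\begin{equation*}
t\cdot xz \;=\; (t\cdot x)\cdot z,
\end{equation*}
which is precisely the identity of associativity for $(Q,\cdot)$. At this point Lemma \ref{Group_Unit} applies directly and yields that $(Q,\cdot)$ is a group.

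I do not anticipate any real obstacle here: the argument is a verbatim adaptation of the $F_1$ case, because in $F_{11}$ the compound term $xy$ appears in exactly the same positions (as the first factor of an outer product on each side) that allow the substitution to be carried out without damaging the remaining structure. The only point that deserves a line of justification is that after substituting $t = xy$, the variable $x$ on the right-hand side and in the middle of the left-hand side is still free and unconstrained, so the resulting identity genuinely holds for all triples $(t,x,z)\in Q^3$; this is immediate from the fact that, for each fixed $x$, the map $y\mapsto xy$ is a bijection of $Q$.
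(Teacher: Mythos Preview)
Your proposal is correct and essentially identical to the paper's own proof: the paper also substitutes $t = xy$ to rewrite $F_{11}$ as $t\cdot xz = tx\cdot z$ and then invokes Lemma \ref{Group_Unit}. Your additional remarks justifying why $t$, $x$, $z$ may be treated as independent are a welcome elaboration, but the argument is the same.
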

\begin{proof}
We can rewrite identity $F_{11}$ $  xy\cdot xz = (xy \cdot x)z $ in the form $t\cdot xz = tx\cdot z,$ where $t=xy$. The rest follows from Lemma \ref{Group_Unit}.

\end{proof}

\begin{corollary} \label{Identity_F24}
Quasigroup $(Q, \cdot)$ with identity $F_{24}$ \;  $yx \cdot zx = y ( x \cdot z  x) $
 is a group.
\end{corollary}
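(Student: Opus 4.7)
The plan is to mimic exactly the pattern already used for Corollary \ref{Identity_F_3} and Corollary \ref{Identity F_8}, invoking the parastrophe machinery rather than grinding out a direct calculation. By Lemma \ref{12-parastrophes_1} we have $(F_{11})^{\ast} = F_{24}$, so a quasigroup $(Q,\cdot)$ satisfying $F_{24}$ has its $(12)$-parastrophe $(Q, \overset{(12)}{\cdot})$ satisfying $F_{11}$. By Theorem \ref{Identity_F11} that parastrophe is a group, and then Lemma \ref{Group_parastroph} lets me pass back to conclude that $(Q,\cdot)$ itself is a group.

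As a sanity check I would also verify the direct route, which is essentially the same argument that was used for $F_{11}$: in the identity $yx\cdot zx = y(x\cdot zx)$, for fixed $x$ the term $s := zx$ runs through all of $Q$ by Lemma \ref{Bijection}, so the identity rewrites as $yx\cdot s = y\cdot xs$, which is associativity. Lemma \ref{Group_Unit} then gives that $(Q,\cdot)$ is a group. This direct check confirms the parastrophe argument and shows there is no hidden obstacle.

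I do not anticipate any serious obstacle: the only thing to be careful about is that in Lemma \ref{12-parastrophes_1} the listed equality is $(F_{11})^{\ast} = F_{24}$, so the reference is to that lemma (Lemma \ref{12-parastrophes_1}) rather than to any \lq\lq Theorem \ref{12-parastrophes_1}\rq\rq. Otherwise the proof is a one-liner citing the previous theorem and the two parastrophe lemmas.
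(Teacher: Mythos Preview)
Your proposal is correct and follows exactly the paper's own approach: the paper's proof is the single line ``By Theorem \ref{12-parastrophes_1} $F_{24} = (F_{11})^{\ast}$,'' relying implicitly on Theorem \ref{Identity_F11} and the parastrophe transfer (your Lemma \ref{Group_parastroph}). Your additional direct verification via the substitution $s := zx$ is a sound sanity check but not needed, and your remark about the Lemma/Theorem label mismatch is accurate---the paper itself writes ``Theorem \ref{12-parastrophes_1}'' while the result is stated as a lemma.
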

\begin{proof}
By Theorem \ref{12-parastrophes_1} $F_{24} = (F_{11})^{\ast}$.
\end{proof}

\begin{theorem} \label{Identity_F12}
Quasigroup $(Q, \cdot)$ with identity $F_{12}$ ($ xy\cdot xz = (x\cdot y x)z$)  is a group.
\end{theorem}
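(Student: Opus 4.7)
The plan is to follow the strategy of Theorem~\ref{Identity_F11}: since the substitution $t=xy$ does not directly reduce $F_{12}$ to associativity (because $yx$, not $xy$, appears on the right-hand side), I first produce a two-sided unit and then reduce to $F_{11}$.

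To find a left unit, let $e_x$ denote the local right unit of $x$ (so $xe_x=x$), and substitute $y:=e_x/x$ in $F_{12}$. By identity~(\ref{(2)}), $yx=(e_x/x)\cdot x = e_x$, so $x\cdot yx = x\cdot e_x = x$, and $F_{12}$ collapses to $(xy)(xz)=xz$ for every $z$. Since $xz$ ranges over all of $Q$ by Lemma~\ref{Bijection}, the element $f:=x(e_x/x)$ is a left unit of $(Q,\cdot)$.

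To promote $f$ to a two-sided unit, I would substitute $x:=f$ in $F_{12}$; using $fy=y$, $fz=z$, and $f\cdot yf = yf$, one obtains $yz=(yf)z$, and right cancellation of $z$ yields $yf=y$. Thus $f$ is also a right unit and $(Q,\cdot)$ is a loop. Elasticity then follows by setting $z:=f$ in $F_{12}$: since $xf=x$ and $(x\cdot yx)f = x\cdot yx$, we obtain $(xy)x = x\cdot yx$.

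Substituting this equality back into $F_{12}$ yields $(xy)(xz)=((xy)x)z$; letting $t:=xy$ gives $t(xz)=(tx)z$, the associative law, and Lemma~\ref{Group_Unit} finishes the proof. The main obstacle is the first step: spotting the substitution $y:=e_x/x$ that makes the right-hand side of $F_{12}$ collapse to $xz$; once the loop structure is established, the remainder proceeds exactly as in Theorem~\ref{Identity_F11}.
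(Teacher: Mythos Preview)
Your proof is correct and follows the same overall architecture as the paper: establish a one-sided unit, upgrade it to a two-sided unit, derive elasticity $xy\cdot x = x\cdot yx$ by setting $z$ equal to the unit, and then rewrite $F_{12}$ as $F_{11}$ to obtain associativity.

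The only difference is in the first step. The paper obtains a \emph{right} unit first, by the simpler substitution $x:=f_z$ (where $f_z z=z$): $F_{12}$ becomes $f_z y\cdot z=(f_z\cdot yf_z)z$, and two cancellations give $y=yf_z$. You instead obtain a \emph{left} unit via the substitution $y:=e_x/x$, which requires the division operation and identity~(\ref{(2)}). Both routes work; the paper's is a bit more direct since it avoids division, while yours has the minor expository advantage of explicitly exhibiting the unit as $x(e_x/x)$. After the loop structure is in place, your steps 3 and 4 coincide with the paper's.
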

\begin{proof}
If we put $x:=f_z$ in identity $F_{12}$, then $ f_zy\cdot f_zz = (f_z\cdot y f_z)z$. After cancellation in the last equality we have $y = y f_z$. The last equality means that quasigroup with identity $F_{12}$ has right unit.

If we put  $x=y=e$ in identity $F_{12}$, then we have $ ee\cdot ez = (e\cdot e e)z$, $e\cdot ez = ez$. Therefore quasigroup $(Q, \cdot)$ with identity $F_{12}$ has left unit, and, finally, this quasigroup is a loop.

If we put $z=1$ in identity $F_{12}$, then we have
\begin{equation}\label{F_12_1}  xy\cdot x = x  \cdot yx
\end{equation}

If we apply equality (\ref{F_12_1}) to identity $F_{12}$, then we have
\begin{equation}\label{F_12_2}
xy\cdot xz = (xy\cdot  x)z.
\end{equation}
If we denote term $xy$ by the letter $t$, then we can re-write identity (\ref{F_12_2}) in the form
\begin{equation}\label{F_12_3}
t\cdot xz = t  x \cdot z.
\end{equation}
\end{proof}

\begin{corollary} \label{identity_F_23}
Quasigroup $(Q, \cdot)$ with identity $F_{23}$  $ yx \cdot zx = y ( x z \cdot x)  $
 is a group.
\end{corollary}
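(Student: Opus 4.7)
The plan is to mirror the strategy used in the preceding corollaries of the excerpt (such as Corollary \ref{Identity_F_3} and Corollary \ref{Identity_F24}): reduce the statement about $F_{23}$ to the already-proved statement about $F_{12}$ by passing through the $(12)$-parastrophe. This is a short invocation of machinery, not a fresh calculation.

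First I would cite Lemma \ref{12-parastrophes_1}, which records the equality $(F_{12})^{\ast}=F_{23}$; since the $(12)$-parastrophe is an involution, this is the same as $(F_{23})^{\ast}=F_{12}$. Concretely, if one takes the identity $F_{23}$, namely $yx\cdot zx = y(xz\cdot x)$, and rewrites every product $a\cdot b$ as $b\ast a$, the variables flip order and one reads off the identity $F_{12}$ for $(Q,\ast)$.

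Next I would note that because $(Q,\cdot)$ is a quasigroup, so is its $(12)$-parastroph $(Q,\ast)$, and by the previous step $(Q,\ast)$ satisfies $F_{12}$. Theorem \ref{Identity_F12} then gives that $(Q,\ast)$ is a group.

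Finally, since $(Q,\cdot)$ is itself the $(12)$-parastroph of $(Q,\ast)$, Lemma \ref{Group_parastroph} transfers the group property back to $(Q,\cdot)$. I do not foresee a real obstacle: the whole point of having developed Lemma \ref{12-parastrophes_1}, Lemma \ref{12-parastrophes_2}, and Lemma \ref{Group_parastroph} earlier is precisely to make this step a formality, and the only thing to be careful about is the direction of the parastrophe equality (that $(F_{23})^{\ast}=F_{12}$, not the other way around), which follows at once from the involutory nature of the $(12)$-operation.
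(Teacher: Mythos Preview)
Your argument is correct and is exactly the paper's own approach: the paper's proof is the one-line ``By Theorem~\ref{12-parastrophes_1} $F_{23}=(F_{12})^{\ast}$,'' which implicitly invokes Theorem~\ref{Identity_F12} and Lemma~\ref{Group_parastroph} just as you do. Your version merely spells out the implicit steps (the involutory nature of the parastrophe and the transfer of the group property), which is fine.
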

\begin{proof}
By Theorem \ref{12-parastrophes_1} $F_{23} = (F_{12})^{\ast}$.
\end{proof}

\begin{theorem} \label{Identity_F13}
Quasigroup $(Q, \cdot)$ with identity $F_{13}$ $ xy\cdot xz = x(y x\cdot z)$  is a loop.
\end{theorem}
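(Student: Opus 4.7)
My plan is to follow the strategy established for $F_{11}$ and $F_{12}$ earlier in the paper: first establish that every element admits a two-sided local unit by a direct substitution, then globalize this to produce an actual two-sided unit.

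First, I would set $y := e_x$, where $e_x := x \backslash x$ so that $x e_x = x$ by the definition of right division. Substituting into $F_{13}$ yields $x \cdot xz = x(e_x x \cdot z)$. Two successive cancellations, left by $x$ and then right by $z$, are legitimate in a quasigroup (by Lemma \ref{Bijection}) and produce $e_x x = x$. Thus each $x \in Q$ possesses a two-sided local unit $e_x$ satisfying $x e_x = e_x x = x$.

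Second, I would derive structural consequences of $F_{13}$ to use in the globalization. Substituting $x := e_y$, and using $e_y y = y e_y = y$, reduces $F_{13}$ to the commutation relation $y(e_y z) = e_y(yz)$ for all $y, z \in Q$. Substituting $z := e_x$ instead gives the companion identity $xy \cdot x = x(yx \cdot e_x)$ for all $x, y$.

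Finally, I would combine the commutation relation with $F_{13}$ via one further carefully chosen substitution to conclude that $e_y$ does not depend on $y$; this produces a universal element $e$ with $ex = xe = x$ for all $x \in Q$, making $(Q, \cdot)$ a loop. The main obstacle is precisely this globalization: the commutation $y(e_y z) = e_y(yz)$ only shows that the left translation $L_{e_y}$ commutes with $L_y$, so its set of fixed points contains $y$ and is $L_y$-invariant, but a priori it need not exhaust $Q$. The remedy is to find a substitution into $F_{13}$ whose cancellations force a universal equation of the form $ty = y$ or $yt = y$ for some $t$ not depending on $y$; any such $t$ is automatically a universal two-sided unit (by the uniqueness of one-sided units in a quasigroup). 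Identifying this substitution, plausibly with the aid of Prover9 as the authors indicate in the introduction, is the decisive step of the proof.
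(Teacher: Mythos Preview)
Your proposal is not a completed proof, and you yourself flag this: the globalization step---showing that the local two-sided units $e_x$ are in fact all equal---is explicitly left open, with the remark that ``identifying this substitution \ldots\ is the decisive step of the proof.'' What you have written is therefore an outline of an approach together with some correct preliminary computations (the derivation of $e_x x = x$ from $y:=e_x$ is valid, as is the commutation relation $y(e_y z)=e_y(yz)$), but the crucial argument is missing. A proof proposal that defers its key step to a yet-to-be-found Prover9 run is not a proof.

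Two further remarks. First, your opening sentence is misleading: the proofs for $F_{11}$ and $F_{12}$ in the paper do \emph{not} proceed by establishing local units and then globalizing. For $F_{11}$ the identity is rewritten as associativity by the substitution $t=xy$, and $(Q,\cdot)$ is immediately a group; for $F_{12}$ the units are obtained directly by single substitutions and cancellation, after which associativity follows. Neither involves the local-to-global difficulty you are facing here. Second, the paper itself does not prove this theorem: its entire proof is the citation ``See \cite{Kin_KUN_04}'' (Kinyon--Kunen, \emph{The structure of extra loops}). So there is no in-paper argument to compare against; if you want to see how the globalization is actually carried out for the extra identity $F_{13}$, that reference is where to look.
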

\begin{proof}
See \cite{Kin_KUN_04}.
\end{proof}

\begin{corollary} \label{identity_F_22}
Quasigroup $(Q, \cdot)$ with identity $F_{22}$  $ yx \cdot zx = (y \cdot x z)x $
 is a loop.
\end{corollary}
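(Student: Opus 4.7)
The plan is to deduce this corollary entirely from facts already assembled in the paper, mirroring the proof pattern used for Corollaries \ref{Identity_F_3}, \ref{Identity_F24}, and \ref{identity_F_23}. The key observation is that $F_{22}$ is the $(12)$-parastrophe of $F_{13}$: by Lemma \ref{12-parastrophes_1} we have $(F_{13})^{\ast} = F_{22}$. Since Theorem \ref{Identity_F13} tells us that any quasigroup satisfying $F_{13}$ is a loop, we only need to transport the existence of a two-sided unit across the $(12)$-parastrophe.

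More precisely, I would argue as follows. Let $(Q,\cdot)$ satisfy $F_{22}$. Then its $(12)$-parastrophe $(Q,\ast)$, defined by $x\ast y = y\cdot x$, satisfies $F_{13}$, because the rewriting rule (\ref{HSTY_1}) converts $F_{22}$ into $F_{13}$ verbatim. By Theorem \ref{Identity_F13}, $(Q,\ast)$ is a loop, so it possesses both a left unit and a right unit. Applying Lemma \ref{12-parastrophes_2} to $(Q,\ast)$ — noting that the $(12)$-parastrophe of $(Q,\ast)$ is $(Q,\cdot)$ itself — the right unit of $(Q,\ast)$ becomes a left unit of $(Q,\cdot)$ and the left unit of $(Q,\ast)$ becomes a right unit of $(Q,\cdot)$. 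Thus $(Q,\cdot)$ is a loop.

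There is essentially no obstacle here: the argument is a one-line invocation of the parastrophic duality already established. The only thing to be slightly careful about is making sure the direction of the parastrophe is applied consistently (i.e.\ that passing from $(Q,\cdot)$ to $(Q,\ast)$ and back respects the swap between left and right units, which is precisely the content of Lemma \ref{12-parastrophes_2}). So the proof can be kept very short, in the style of Corollary \ref{identity_F_23}, simply citing Theorem \ref{12-parastrophes_1}, Lemma \ref{12-parastrophes_2}, and Theorem \ref{Identity_F13}.
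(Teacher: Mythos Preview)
Your proposal is correct and follows exactly the paper's approach: the paper's proof of this corollary is the single line ``By Theorem \ref{12-parastrophes_1} $F_{22} = (F_{13})^{\ast}$,'' which implicitly invokes Theorem \ref{Identity_F13} and Lemma \ref{12-parastrophes_2} just as you spell out.
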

\begin{proof}
By Theorem \ref{12-parastrophes_1} $F_{22} = (F_{13})^{\ast}$.
\end{proof}

Examples of non-associative loops with identities $F_{13}$ and $F_{22}$ are given in \cite{Kin_KUN_04},

\begin{theorem} \label{Identity_F14}
Quasigroup $(Q, \cdot)$ with identity $F_{14}$ $ xy\cdot xz = x(y \cdot x z) $  is a group.
\end{theorem}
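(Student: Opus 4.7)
The plan is to reduce identity $F_{14}$ to the identity of associativity, in complete analogy with the proof of Theorem \ref{Identity_F11}. Identity $F_{14}$ reads $xy\cdot xz = x(y\cdot xz)$, and the natural move is to treat the common subterm $xz$, which appears on both sides, as a single independent variable.

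More precisely, fix an arbitrary $x \in Q$. By Lemma \ref{Bijection} applied to the term $xz$, as $z$ ranges over the whole set $Q$ the element $xz$ also runs over the whole of $Q$ (this is just the fact that the left translation $L_x$ is a bijection of $Q$). Setting $t = xz$, identity $F_{14}$ takes the form
\begin{equation*}
xy \cdot t = x \cdot (y t),
\end{equation*}
and since $x$, $y$, $t$ are now independent variables ranging over $Q$, this is exactly the identity of associativity.

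An application of Lemma \ref{Group_Unit} then yields that $(Q, \cdot)$ is a group. The only point to verify with any care is that the substitution $t = xz$ really does let $t$ range over all of $Q$ independently of $y$, but this is immediate from the quasigroup property; no substantive obstacle arises, and the argument is a direct analogue of the ones used for $F_1$, $F_{11}$, and $F_{12}$.
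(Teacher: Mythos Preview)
Your proof is correct and follows essentially the same approach as the paper: the paper's own proof simply reads ``We can put $t = xz$. Further proof is similar to the proof of Theorem \ref{Identity_F11},'' and you have spelled out precisely this substitution together with the appeal to Lemmas \ref{Bijection} and \ref{Group_Unit}.
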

\begin{proof}
We can put $t= xz$. Further proof is similar to the proof  of Theorem  \ref{Identity_F11}.
\end{proof}

\begin{corollary} \label{identity_F_21}
Quasigroup $(Q, \cdot) $ with identity $F_{21}$  $ yx \cdot zx = (yx\cdot z)x $
 is a group.
\end{corollary}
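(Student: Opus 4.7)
The plan is to follow exactly the template the authors have used for every preceding corollary of this kind (\textit{cf.} the proofs of Corollary \ref{Identity_F_3}, Corollary \ref{Identity_F24}, Corollary \ref{identity_F_23}, Corollary \ref{identity_F_22}). Since the statement is about the $(12)$-parastrophic mate of $F_{14}$, all the real work has already been done in Theorem \ref{Identity_F14}; the corollary should just transport that result across the parastrophe.

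More explicitly, I would first invoke Lemma \ref{12-parastrophes_1} to observe that $(F_{14})^{\ast} = F_{21}$. Hence if $(Q, \cdot)$ satisfies $F_{21}$, then its $(12)$-parastroph $(Q, \ast)$ satisfies $F_{14}$. By Theorem \ref{Identity_F14}, $(Q, \ast)$ is then a group. Finally, by Lemma \ref{Group_parastroph}, the $(12)$-parastroph of a group is again a group, so $(Q, \cdot) = (Q, \ast)^{(12)}$ is a group as well.

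There is essentially no obstacle here: the statement is a formal consequence of results already in hand, and the proof will be a one-line reference. The only thing one has to be slightly careful about is to cite both Lemma \ref{12-parastrophes_1} (to identify $F_{21}$ as $(F_{14})^{\ast}$) and Lemma \ref{Group_parastroph} (to pass the \emph{group} property, not merely a loop property, back through the parastrophe), but neither step requires any computation.
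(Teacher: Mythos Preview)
Your proposal is correct and follows exactly the paper's approach: the paper's proof is the single line ``By Theorem \ref{12-parastrophes_1} $F_{21} = (F_{14})^{\ast}$.'' Your version is slightly more careful in also invoking Lemma \ref{Group_parastroph} to transport the group property across the parastrophe, which the paper leaves implicit.
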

\begin{proof}
By Theorem \ref{12-parastrophes_1} $F_{21} = (F_{14})^{\ast}$.
\end{proof}

\begin{theorem} \label{Identity_F15}
Quasigroup $(Q, \cdot)$ with identity $F_{15}$ $ (xy \cdot x)z = (x\cdot y  x)z $  does not have left and right unit.
\end{theorem}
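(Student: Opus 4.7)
The key observation is that the variable $z$ appears only in the rightmost factor on both sides of $F_{15}$. Since every quasigroup is right cancellative (a consequence of the equational definition), we may cancel $z$ and reduce $F_{15}$ to the flexibility identity
\[
xy\cdot x = x\cdot yx.
\]
Thus it suffices to exhibit one flexible quasigroup that possesses neither a left nor a right unit.

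The plan is to locate such an example inside the class of commutative quasigroups: if $(Q,\cdot)$ is commutative then
\[
xy\cdot x = x \cdot xy = x \cdot yx,
\]
so every commutative quasigroup satisfies $F_{15}$. The task then collapses to producing a small commutative quasigroup with no two-sided, left, or right unit. A natural candidate is $Q = \{0,1,2\}$ with $x\ast y = -(x+y)\pmod 3$; its Cayley table is a symmetric Latin square, so it is a commutative quasigroup, and a row-by-row inspection shows no row of the table coincides with the header, i.e.\ no element is a left unit — and by commutativity, none is a right unit either.

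The proof therefore has two routine components: first, the one-line reduction via right cancellation, and second, the verification of the counterexample (Latin square property and absence of a one-sided unit). I do not expect any genuine obstacle; the only care needed is to ensure that the exhibited table really is a quasigroup and that no column, in addition to no row, matches the header.
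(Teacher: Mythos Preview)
Your proposal is correct and follows the same strategy as the paper: exhibit a single order-$3$ quasigroup satisfying $F_{15}$ with neither a left nor a right unit. The paper simply displays the table
\[
\begin{array}{c|ccc}
\cdot & 0 & 1 & 2\\
\hline
0 & 1 & 0 & 2\\
1 & 0 & 2 & 1\\
2 & 2 & 1 & 0
\end{array}
\]
without further commentary; your example $x\ast y=-(x+y)\pmod 3$ is a different (but equally valid) commutative quasigroup, and your preliminary reduction of $F_{15}$ to flexibility together with the observation that commutativity implies flexibility is a clean justification that the paper omits.
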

\begin{proof}
The following quasigroup $(Q, \cdot)$ satisfies identity $F_{15}$ and it has not left and right unit.

\[
\begin{array}{c|ccc}
\cdot & 0 & 1 & 2\\
\hline
    0 & 1 & 0 & 2 \\
    1 & 0 & 2 & 1 \\
    2 & 2 & 1 & 0
\end{array}
\]

\end{proof}

\begin{corollary} \label{identity_F_30}
Quasigroup $(Q, \cdot) $ with identity $F_{30}$  $ y(xz \cdot x) = y(x \cdot zx) $
does not have left and right unit element.
\end{corollary}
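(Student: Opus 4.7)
The plan is to deduce this corollary directly from Theorem \ref{Identity_F15} using the $(12)$-parastrophe machinery established in Lemmas \ref{12-parastrophes_1} and \ref{12-parastrophes_2}, exactly as the author has done in the preceding string of corollaries (e.g.\ Corollary \ref{identity_F_21}, Corollary \ref{identity_F_22}). So the proof should be a one-liner in spirit, with the content concentrated in a careful application of the dictionary.

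First I would record from Lemma \ref{12-parastrophes_1} that $(F_{15})^{\ast} = F_{30}$. Consequently, if $(Q, \cdot)$ satisfies $F_{30}$, then its $(12)$-parastrophe $(Q, \ast)$, defined by $x \ast y = y \cdot x$, satisfies $F_{15}$. By Theorem \ref{Identity_F15}, the quasigroup $(Q, \ast)$ has neither a left nor a right unit.

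Next I would invoke Lemma \ref{12-parastrophes_2} in contrapositive form. If $(Q, \cdot)$ had a left unit, then $(Q, \ast)$ would have a right unit, contradicting the conclusion just drawn; if $(Q, \cdot)$ had a right unit, then $(Q, \ast)$ would have a left unit, again contradicting Theorem \ref{Identity_F15}. This yields the assertion. Since $(12)$-parastrophy is an involution on groupoids, the Cayley table displayed in the proof of Theorem \ref{Identity_F15} transposed about the main diagonal would furnish an explicit witness for $F_{30}$ if one wished to exhibit one, but no such computation is required here.

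There is essentially no obstacle: the only thing to check is that the parastrophe identification $(F_{15})^{\ast}=F_{30}$ is the one claimed in Lemma \ref{12-parastrophes_1}, which I would verify by applying rule (\ref{HSTY_1}) directly to $(xy\cdot x)z = (x\cdot yx)z$ and confirming that, after rewriting in the operation $\ast$ and reversing variable order, one obtains $y(xz\cdot x) = y(x\cdot zx)$, i.e.\ $F_{30}$. Once this is in hand, the corollary follows immediately from the two lemmas and Theorem \ref{Identity_F15}.
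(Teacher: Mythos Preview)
Your overall strategy---reduce $F_{30}$ to $F_{15}$ via the $(12)$-parastrophe dictionary---is exactly the paper's approach, and the one-line proof in the paper reads simply ``By Theorem \ref{12-parastrophes_1} $F_{30} = (F_{15})^{\ast}$.''

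However, the way you spell out the argument contains a genuine logical slip. Theorem \ref{Identity_F15} is an \emph{existential} negative: it asserts that \emph{some} quasigroup satisfying $F_{15}$ lacks a left and a right unit (and the proof is just an example). It does \emph{not} assert that every $F_{15}$-quasigroup lacks units; indeed every group satisfies $F_{15}$ and has a two-sided unit. So when you take an arbitrary $(Q,\cdot)$ with $F_{30}$, pass to its parastrophe $(Q,\ast)$, and then write ``By Theorem \ref{Identity_F15}, the quasigroup $(Q,\ast)$ has neither a left nor a right unit,'' that inference is invalid, and the contrapositive application of Lemma \ref{12-parastrophes_2} that follows collapses with it.

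The correct argument is precisely the one you relegate to an afterthought: start from the explicit witness in the proof of Theorem \ref{Identity_F15}, take its $(12)$-parastrophe (here the table is symmetric, so it is its own parastrophe), observe that it satisfies $F_{30}$ and has neither unit. That \emph{is} the proof, not an optional illustration; your sentence ``no such computation is required here'' should be deleted and the witness argument promoted to the main line.
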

\begin{proof}
By Theorem \ref{12-parastrophes_1} $F_{30} = (F_{15})^{\ast}$.
\end{proof}

\begin{theorem} \label{Identity_F16}
Quasigroup $(Q, \cdot)$ with identity $F_{16}$ ($ (xy \cdot x)z = x(y  x\cdot z) $)  has left unit  and it has not right unit.
\end{theorem}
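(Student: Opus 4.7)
The plan is to mirror the strategy used for Theorem \ref{Identity_F7}: extract the left unit by a clever local-unit substitution into $F_{16}$, then destroy the possibility of a right unit via an explicit small counterexample.

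For the left unit, I would substitute $x := f_y$ in $F_{16}$, where $f_y$ is the unique element with $f_y\cdot y = y$. This is the decisive move because it collapses the subterm $xy$ to $f_y\cdot y = y$, so the bracket $xy\cdot x$ on the left becomes $y\cdot f_y$, while on the right $yx = y\cdot f_y$ as well. The identity therefore reduces to
\begin{equation*}
(y f_y)\cdot z \;=\; f_y\bigl((y f_y)\cdot z\bigr).
\end{equation*}
Fix $y$ and let $w = (y f_y)\cdot z$. By Lemma \ref{Bijection}, as $z$ ranges over $Q$ the element $w$ ranges over all of $Q$, so $w = f_y\cdot w$ for every $w\in Q$. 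Thus $f_y$ is a left unit of $(Q,\cdot)$; in particular $f_y$ does not actually depend on $y$, since a quasigroup admits at most one left unit.

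For the absence of a right unit, I would produce an explicit $3\times 3$ Cayley table (found by Mace4, in the spirit of the counterexamples already used for $F_7$, $F_9$, $F_{15}$) that satisfies $F_{16}$ but admits no column equal to the index column. Verification is a finite check on the table.

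The mechanical step -- the left-unit argument -- is immediate once the substitution $x:=f_y$ is spotted; the real obstacle is purely computational, namely exhibiting a quasigroup of order $3$ (or $4$) that genuinely satisfies $F_{16}$ while having no right unit, and then confirming this by inspection of its multiplication table.
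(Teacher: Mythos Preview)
Your argument is correct, and the left-unit part is in fact cleaner than the paper's own proof. The paper does \emph{not} mirror the $F_7$ trick here; instead it performs the substitution $x\mapsto y$, $y\mapsto y\backslash x$ in $F_{16}$ to obtain
\[
xy\cdot z \;=\; y\bigl((y\backslash x)\,y\cdot z\bigr),
\]
then rewrites this as $(y\backslash x)\,y\cdot z = y\backslash(xy\cdot z)$, sets $t=xy$ (so $x=t/y$), and finally specialises $y=t$ to reach $(y\backslash(y/y))\,y\cdot z = z$, exhibiting the left unit as the element $(y\backslash(y/y))\,y$. Your single substitution $x:=f_y$ collapses the identity immediately to $(yf_y)\cdot z = f_y\cdot\bigl((yf_y)\cdot z\bigr)$, from which the left unit drops out by Lemma~\ref{Bijection}; this avoids all the intermediate manipulations with $\backslash$ and $/$ and is exactly the argument the paper uses for $F_7$ and $F_{36}$. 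What the paper's route buys is an explicit closed formula for the left unit in terms of the parastrophic operations, but for the stated theorem your approach is shorter and more transparent. For the counterexample, the paper uses precisely the $3$-element table you anticipate (the same one used for $F_7$), so that part coincides.
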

\begin{proof}
If we put  $x:=y$, $y:= y\backslash x$ in identity $F_{16}$, then we have $(y(y\backslash x) \cdot y) z = y((y\backslash x)y\cdot z)$ and after application of identity (\ref{(1)}) we have

\begin{equation} \label{F16_1}
xy\cdot z = y ((y\backslash x) y \cdot z).
\end{equation}
Using the operation $\backslash $ in equality (\ref{F16_1}) we have

\begin{equation} \label{F16_2}
(y\backslash x) y \cdot z  = y \backslash (xy\cdot z).
\end{equation}
If in equality (\ref{F16_2}) we put $xy =t$, then $x= t\slash y$,

\begin{equation} \label{F16_3}
(y\backslash (t\slash y))y \cdot z  = y \backslash (t\cdot z).
\end{equation}
If in equality (\ref{F16_3}) we put $y=t$  and apply identity (\ref{(3)}),
then we have
\begin{equation} \label{F16_4}
(y\backslash (y\slash y)) y \cdot z  =  z.
\end{equation}
The last equality demonstrates that quasigroup $(Q, \cdot)$ has left identity element.

The following quasigroup satisfies identity $F_{16}$ and it does not have right identity element.

\[
\begin{array}{c|ccc}
\cdot & 0 & 1 & 2\\
\hline
    0 & 1 & 2 & 0 \\
    1 & 0 & 1 & 2 \\
    2 & 2 & 0 & 1
\end{array}
\]
\end{proof}

\begin{corollary} \label{identity_F_29}
Quasigroup $(Q, \cdot) $ with identity $F_{29}$  $ (y\cdot xz)x  = y(x \cdot zx) $
does not have left unit  and it has right unit element.
\end{corollary}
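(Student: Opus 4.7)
The plan is a direct appeal to the $(12)$-parastrophe machinery already assembled in the paper. By Lemma \ref{12-parastrophes_1} we have $F_{29} = (F_{16})^{\ast}$, so for any quasigroup $(Q, \cdot)$ satisfying $F_{29}$ its $(12)$-parastrophe $(Q, \ast)$ satisfies $F_{16}$. Theorem \ref{Identity_F16} then supplies two facts about $(Q, \ast)$: it has a left unit, and there exists a concrete $3\times 3$ example showing it need not have a right unit. Lemma \ref{12-parastrophes_2} translates these statements back: the left unit of $(Q, \ast)$ yields a right unit of $(Q, \cdot)$, which gives the positive half of the claim.

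For the negative half, I would exhibit an explicit $F_{29}$-quasigroup without left unit by taking the $(12)$-parastrophe of the $3\times 3$ counterexample of Theorem \ref{Identity_F16} (that is, reflecting its Cayley table across the main diagonal). Since the original table satisfies $F_{16}$ and $F_{29} = (F_{16})^{\ast}$, the reflected table satisfies $F_{29}$; since the original had no right unit, the reflected one has no left unit, as $e\ast x=x$ is equivalent under the rule $x\ast y=y\cdot x$ to $x\cdot e=x$.

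I do not expect any real obstacle: the whole argument is a two-step translation through Lemmas \ref{12-parastrophes_1} and \ref{12-parastrophes_2}, mirroring the pattern used in Corollaries \ref{Identity_F_3}, \ref{Identity_F24}, \ref{identity_F_30}, etc. The only point meriting a sentence of comment is the observation that the absence of a one-sided unit, and not merely its presence, transfers under $(12)$-parastrophe; but this is immediate from the defining identity $x\ast y=y\cdot x$.
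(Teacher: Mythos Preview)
Your proposal is correct and follows exactly the paper's approach: the paper's proof of this corollary is the single line ``By Theorem \ref{12-parastrophes_1} $F_{29} = (F_{16})^{\ast}$,'' leaving implicit the invocation of Theorem \ref{Identity_F16} and Lemma \ref{12-parastrophes_2} that you have spelled out. Your additional remark on how the counterexample transfers under $(12)$-parastrophe is a welcome clarification of what the paper leaves tacit.
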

\begin{proof}
By Theorem \ref{12-parastrophes_1} $F_{29} = (F_{16})^{\ast}$.
\end{proof}

\begin{theorem} \label{Identity_F17}
Quasigroup $(Q, \cdot)$ with identity $F_{17}$ (left \,\, Moufang)  $ (xy \cdot x)z = x(y \cdot  x z) $  is a loop.
\end{theorem}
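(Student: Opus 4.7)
My plan is to exhibit a two-sided identity in $(Q,\cdot)$; loopness will then follow immediately.

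To begin, I would substitute $y:=x\backslash x$ into $F_{17}$. Since $x(x\backslash x)=x$ by~(\ref{(1)}), the identity $F_{17}$ collapses to
\begin{equation*}
x^2\cdot z \;=\; x\bigl((x\backslash x)\cdot xz\bigr).
\end{equation*}
I would then set $y_0:=x\backslash(x/x)$, which satisfies $xy_0=x/x$ and hence $xy_0\cdot x=(x/x)\cdot x=x$ by~(\ref{(2)}). Substituting this $y=y_0$ into $F_{17}$ gives $xz=x(y_0\cdot xz)$, and identity~(\ref{(3)}) yields $z=y_0\cdot xz$; since $xz$ ranges over all of $Q$ (Lemma~\ref{Bijection}), we obtain $y_0\cdot w=x\backslash w$ for every $w\in Q$, i.e.\ left multiplication by $y_0$ coincides with left division by $x$.

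Next, I would substitute $x:=y_0$ into $F_{17}$ and rewrite each occurrence of $y_0(\cdot)$ using the previous step; this yields
\begin{equation*}
\bigl((x\backslash y)\,y_0\bigr)\,z \;=\; x\backslash\bigl(y\,(x\backslash z)\bigr).
\end{equation*}
Specialising $y:=x^2$: the left-hand side becomes $(xy_0)\,z=(x/x)\cdot z$, while the first displayed equation, applied with $z$ replaced by $x\backslash z$, reduces the right-hand side to $(x\backslash x)\cdot z$. Right-cancellation (a consequence of~(\ref{(4)})) now forces $x/x=x\backslash x$; let $e_x$ denote their common value. Then $x\,e_x=x=e_x\,x$, so $e_x$ is a two-sided local identity at~$x$.

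The remaining task is to show $e_x$ is in fact global. Setting $z:=e_x$ in $F_{17}$ produces
\begin{equation*}
(xy\cdot x)\,e_x \;=\; x(yx).
\end{equation*}
The main obstacle will be to derive from $F_{17}$ the flexibility law $(xy)x=x(yx)$; once this is established, the display above shows $e_x$ is a right-identity for every element of the form $xy\cdot x$, and Lemma~\ref{Bijection} ensures these elements exhaust $Q$, so $e_x$ is a global right-identity. Uniqueness of right-identities then forces $e_x$ to be independent of $x$, yielding a universal element $e$; for any $y\in Q$ we have $e=e_y=y/y$, so $e\,y=(y/y)\,y=y$ by~(\ref{(2)}), whence $e$ is also a left-identity and $(Q,\cdot)$ is a loop. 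I expect the derivation of flexibility to require iterating $F_{17}$ against the consequences established in the earlier steps.
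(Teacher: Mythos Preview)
Your argument through Step~4 is correct and rather clean: the substitutions $y:=x\backslash(x/x)$ and then $x:=y_0$ into $F_{17}$ are legitimate, and the specialisation $y:=x^2$ genuinely yields $(x/x)\,z=(x\backslash x)\,z$, hence $e_x:=x/x=x\backslash x$ is a two-sided \emph{local} identity at~$x$. The paper itself gives no argument for this theorem at all---it simply cites \cite{IZ_SCERB,KUNEN,VS_VI}---so there is no ``paper's proof'' to compare to; the comparison is against Kunen's published argument.

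The difficulty is that your proof is not finished, and you say so yourself. The passage from the local identity $e_x$ to a global one rests entirely on the flexibility law $(xy)x=x(yx)$, which you do not derive; you write only that you ``expect the derivation of flexibility to require iterating $F_{17}$.'' This is the genuine gap. Note that from your displayed equation $(xy\cdot x)\,e_x=x(yx)$ together with $w\,e_w=w$ for $w=xy\cdot x$, one sees that flexibility is \emph{equivalent} to $e_{xy\cdot x}=e_x$, i.e.\ to constancy of the map $x\mapsto e_x$ along the orbit of $x$ under $w\mapsto (wy)w$. So proving flexibility here is essentially the same problem as proving that the local identities coincide---you have reformulated the obstacle, not removed it.

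Kunen's route in \cite{KUNEN} does not pass through flexibility first; he obtains a one-sided identity by a different chain of substitutions and then upgrades it. If you want to complete your line, you will need a concrete derivation of $(xy)x=x(yx)$ from $F_{17}$ and the quasigroup axioms alone, and you should expect this to be at least as intricate as the steps you have already carried out; a vague plan to ``iterate $F_{17}$'' is not enough. As written, the proposal establishes a useful intermediate fact ($x/x=x\backslash x$) but leaves the main theorem unproved.
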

\begin{proof}
 See \cite{IZ_SCERB, KUNEN, VS_VI}.

 \end{proof}

\begin{corollary} \label{identity_F_27}
Quasigroup $(Q, \cdot) $ with identity $F_{27}$  (right \,\, Moufang) $ (yx \cdot z)x = y (x\cdot  z  x) $
is a loop.
\end{corollary}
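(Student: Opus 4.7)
The plan is to mimic the short proofs given for the earlier parastrophe corollaries (e.g.\ Corollaries \ref{Identity_F_3}, \ref{Identity_F24}, \ref{identity_F_30}). The identity $F_{27}$ is the $(12)$-parastrophe of $F_{17}$ by Lemma \ref{12-parastrophes_1}, and $F_{17}$ is already known to force a quasigroup to be a loop by Theorem \ref{Identity_F17}. So the task is simply to transfer the ``loop'' conclusion across the $(12)$-parastrophe, using Lemma \ref{12-parastrophes_2}.

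Concretely, suppose $(Q, \cdot)$ satisfies $F_{27}$. Passing to the $(12)$-parastrophe $(Q, \ast)$ via $x \ast y = y \cdot x$ converts $F_{27}$ into $F_{17}$ (this is the content of Lemma \ref{12-parastrophes_1}, which I would simply cite). Hence $(Q, \ast)$ is a quasigroup satisfying the left Moufang identity $F_{17}$, and Theorem \ref{Identity_F17} tells us that $(Q, \ast)$ is a loop, i.e.\ it has both a left unit and a right unit.

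Finally I invoke Lemma \ref{12-parastrophes_2}: a left unit of $(Q, \ast)$ is a right unit of $(Q, \cdot)$, and a right unit of $(Q, \ast)$ is a left unit of $(Q, \cdot)$. Therefore $(Q, \cdot)$ has both a left and a right unit, and is consequently a loop. There is no real obstacle here; the only point of potential error is keeping straight which side the units flip to under parastrophy, but that is exactly what Lemma \ref{12-parastrophes_2} takes care of. In the final writeup I would compress this to a single sentence, following the style of the preceding corollaries.
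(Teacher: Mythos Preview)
Your proposal is correct and follows exactly the paper's approach: the paper's proof consists of the single line ``By Theorem \ref{12-parastrophes_1} $F_{27} = (F_{17})^{\ast}$,'' leaving implicit the transfer of units via Lemma \ref{12-parastrophes_2} that you spell out. Your more detailed version is a faithful unpacking of that one-liner.
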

\begin{proof}
By Theorem \ref{12-parastrophes_1} $F_{27} = (F_{17})^{\ast}$.
\end{proof}

\begin{theorem} \label{Identity_F18}
Quasigroup $(Q, \cdot)$ with identity $F_{18}$  ( $ (x\cdot y  x)z = x(y   x \cdot z) $)  is a group.
\end{theorem}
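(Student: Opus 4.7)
The identity $F_{18}$ reads $(x\cdot yx)z = x(yx\cdot z)$, and the crucial observation is that the subterm $yx$ appears in exactly the same position on both sides, while the other occurrences of $x$ (the outer ones) and $z$ stay fixed. My plan is therefore to mimic the proof strategy used for Theorems \ref{Identity_F_1}, \ref{Identity_F11}, and \ref{Identity_F14}: introduce a new variable that absorbs the repeated compound subterm, reduce the identity to associativity, and then invoke Lemma \ref{Group_Unit}.

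More concretely, I would fix $x$ and $z$ arbitrarily in $Q$ and let $y$ range over $Q$. By Lemma \ref{Bijection}, as $y$ varies the element $t := yx$ takes every value of $Q$ (with $y = t/x$). Substituting $t$ for $yx$ in $F_{18}$ converts the identity into $(x\cdot t)z = x(t\cdot z)$, and since $x, t, z$ are now independent elements of $Q$, this is precisely the associative law.

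Once associativity is in hand, Lemma \ref{Group_Unit} immediately yields that $(Q, \cdot)$ possesses a two-sided unit, and an associative loop is a group. No obstacle is anticipated, since the structural shape of $F_{18}$ is parallel to that of the previously handled identities; the only point requiring a moment of care is checking that the replacement $t = yx$ is legitimate, i.e.\ that $t$ is genuinely a free variable independent of the other $x$ and $z$ in the identity, which is exactly what Lemma \ref{Bijection} guarantees.
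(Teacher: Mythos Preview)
Your proposal is correct and follows exactly the same route as the paper: denote $t = yx$, observe that $F_{18}$ reduces to the associative law $xt\cdot z = x\cdot tz$, and then invoke Lemma~\ref{Group_Unit}. The only difference is that you spell out more carefully (via Lemma~\ref{Bijection}) why the substitution is legitimate, which the paper takes for granted.
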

\begin{proof}
Denote the term $y  x$ by variable $t$. Then identity $F_{18}$   takes the form $ xt\cdot z = x \cdot  t z $. The rest follows from
Lemma \ref{Group_Unit}.
 \end{proof}

\begin{corollary} \label{identity_F_28}
Quasigroup $(Q, \cdot) $ with identity $F_{28}$  $ (y \cdot xz)x = y(xz\cdot x) $
is a group.
\end{corollary}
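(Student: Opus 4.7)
The plan is to follow the same parastrophe-transfer template that has been used for the other corollaries throughout this section, but being careful that here we want to transfer the \emph{group} property rather than just a one-sided unit. By Lemma \ref{12-parastrophes_1} we have the identification $F_{28} = (F_{18})^{\ast}$, so the identity $F_{28}$ holding in $(Q,\cdot)$ is equivalent to the identity $F_{18}$ holding in the $(12)$-parastrophe $(Q,\ast)$, where $x \ast y = y \cdot x$.

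Next I would apply Theorem \ref{Identity_F18} to the quasigroup $(Q,\ast)$, which yields that $(Q,\ast)$ is a group. To conclude that $(Q,\cdot)$ itself is a group, I would invoke Lemma \ref{Group_parastroph}, which tells us that if a quasigroup is a group, so is its $(12)$-parastrophe; applied to $(Q,\ast)$, this returns that its $(12)$-parastrophe, namely $(Q,\cdot)$, is a group. So the whole argument collapses to a single appeal to Lemmas \ref{12-parastrophes_1}, \ref{Group_parastroph}, and Theorem \ref{Identity_F18}, exactly in the spirit of the other corollaries in this subsection.

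There is essentially no obstacle here, because all of the structural work was already done in the proof of Theorem \ref{Identity_F18} (where $F_{18}$ was reduced to associativity by the substitution $t = yx$). The only minor point worth flagging is that in the earlier corollaries of this paper only one-sided units were transferred across parastrophes via Lemma \ref{12-parastrophes_2}; here we need the slightly stronger transfer provided by Lemma \ref{Group_parastroph}, and it is important to note that being a group (in particular, associativity) really is preserved under the $(12)$-parastrophe, so no counterexample can arise.
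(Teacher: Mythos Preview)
Your proof is correct and follows exactly the same parastrophe-transfer approach as the paper, which simply writes ``By Theorem \ref{12-parastrophes_1} $F_{28} = (F_{18})^{\ast}$.'' You have merely made explicit the appeal to Lemma \ref{Group_parastroph} that the paper leaves implicit; this is a welcome clarification, since transferring the full group property across the $(12)$-parastrophe does require more than Lemma \ref{12-parastrophes_2}.
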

\begin{proof}
By Theorem \ref{12-parastrophes_1} $F_{28} = (F_{18})^{\ast}$.
\end{proof}

\begin{theorem} \label{Identity_F19}
Quasigroup $(Q, \cdot)$ with identity $F_{19}$ (left \,\ Bol)   $ (x\cdot y  x)z = x(y \cdot  x  z) $ does not have left unit and it has right unit element.
\end{theorem}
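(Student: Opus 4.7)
The plan is to split the proof into two independent parts: first, derive algebraically that $x \backslash x$ is a right unit; second, exhibit an explicit small quasigroup satisfying $F_{19}$ with no left unit.

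For the right unit, I would substitute $z := x \backslash x$ in identity $F_{19}$. By identity~(\ref{(1)}) we have $x \cdot (x \backslash x) = x$, so $xz = x$ and the right-hand side of $F_{19}$ collapses to
\[
x \bigl( y \cdot x (x \backslash x) \bigr) = x (y \cdot x) = x \cdot yx.
\]
Hence identity $F_{19}$ reduces to the equation
\[
(x \cdot yx)(x \backslash x) = x \cdot yx.
\]
Now I invoke Lemma~\ref{Bijection} twice. Fix $x$; as $y$ runs through $Q$, the product $yx$ runs through $Q$, so putting $t := yx$ the equation becomes $(xt)(x \backslash x) = xt$; as $t$ runs through $Q$, the product $xt$ runs through $Q$, so putting $a := xt$ yields $a \cdot (x \backslash x) = a$ for every $a \in Q$. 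Therefore $x \backslash x$ is a right unit of $(Q, \cdot)$; uniqueness of right units in a quasigroup then makes $x \backslash x$ independent of $x$.

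For the non-existence of a left unit, following the pattern of Theorems~\ref{Identity_F9}, \ref{Identity_F15}, \ref{Identity_F16}, I would exhibit a concrete Cayley table of a small quasigroup (very likely of order~$3$) which satisfies $F_{19}$ yet has no row equal to the header row. Such a counterexample is found using Mace4, as mentioned in the introduction, and is then verified by direct finite substitution into $F_{19}$ and by inspection of the table for a left unit.

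The algebraic half is essentially mechanical once one notices the fortunate fact that substituting $z := x \backslash x$ produces the same inner term $x \cdot yx$ on both sides of $F_{19}$; the main obstacle is therefore producing the counterexample. The difficulty there is twofold: at order~$3$ the space of quasigroups is quite constrained, so one must verify that $F_{19}$ actually admits a non-loop solution at such small orders, and if it does not, one must move to order~$4$ and present a slightly larger table while still keeping the verification of the identity tractable by hand.
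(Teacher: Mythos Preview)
Your approach is essentially the paper's: the substitution $z := x\backslash x$ is exactly the paper's $z := e_x$ in different notation, and the paper likewise dispatches the left-unit claim by a Mace4-style counterexample. Your worry about order~$3$ is unfounded --- the paper exhibits the order-$3$ table
\[
\begin{array}{c|ccc}
\cdot & 0 & 1 & 2\\
\hline
   0 & 1 & 0 & 2 \\
    1 & 2 & 1 & 0 \\
    2 & 0 & 2 & 1
\end{array}
\]
which satisfies $F_{19}$, has right unit $1$, and has no left unit; so your proposal is complete once you plug this in.
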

\begin{proof}
Quasigroup $(Q, \cdot)$ with identity $F_{19}$ does not have left unit.

\[
\begin{array}{c|ccc}
\cdot & 0 & 1 & 2\\
\hline
   0 & 1 & 0 & 2 \\
    1 & 2 & 1 & 0 \\
    2 & 0 & 2 & 1
\end{array}
\]

Quasigroup $(Q, \cdot)$ with identity $F_{19}$ has right unit.
If in identity   $ (x\cdot y  x)z = x(y \cdot  x  z) $ we put $z:=e_x$, then we have $ (x\cdot y  x)e_x  = x\cdot y  x   $ for all $x, y \in Q$ \cite{2017_Scerb}.
\end{proof}

\begin{corollary} \label{identity_F_26}
Quasigroup $(Q, \cdot) $ with identity $F_{26}$ (right \,\, Bol ) $ (yx \cdot z)x = y (x z \cdot x) $
has left unit and does not have right unit.
\end{corollary}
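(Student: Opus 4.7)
The plan is to invoke the (12)-parastrophe machinery that handled every earlier corollary in this subsection. By Lemma~\ref{12-parastrophes_1} we have $(F_{19})^{\ast} = F_{26}$, so a quasigroup $(Q,\cdot)$ satisfies $F_{26}$ exactly when its (12)-parastrophe $(Q,\ast)$ satisfies $F_{19}$. Theorem~\ref{Identity_F19} tells us that any such $(Q,\ast)$ has a right unit but need not have a left unit. By Lemma~\ref{12-parastrophes_2}, left and right units are exchanged under passage to the (12)-parastrophe, so the guaranteed right unit of $(Q,\ast)$ yields a left unit of $(Q,\cdot)$.

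For the failure of a right unit in the $F_{26}$ case, I would transpose (across the main diagonal) the $3 \times 3$ Cayley table exhibited in the proof of Theorem~\ref{Identity_F19}. The transposed table is the multiplication table of the (12)-parastrophe, so it satisfies $F_{26}$; and since the original table has no left unit, Lemma~\ref{12-parastrophes_2}---which amounts to a genuine bijection between left units of one quasigroup and right units of its (12)-parastrophe---forces the transposed quasigroup to have no right unit.

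I do not expect a real obstacle here: the entire argument is mechanical once Lemma~\ref{12-parastrophes_1} and Lemma~\ref{12-parastrophes_2} are in hand, and it follows the one-line proof template already used in Corollary~\ref{identity_F_29} and Corollary~\ref{identity_F_30}. The only bit of care is to note that Lemma~\ref{12-parastrophes_2} gives a two-sided correspondence on unit elements, so that absence (not merely non-production) of a left unit in the $F_{19}$ counterexample transfers to absence of a right unit in its transpose.
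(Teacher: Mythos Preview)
Your proposal is correct and follows exactly the paper's approach: the paper's proof is the single line ``By Theorem~\ref{12-parastrophes_1} $F_{26} = (F_{19})^{\ast}$,'' and your argument simply unpacks this, together with Lemma~\ref{12-parastrophes_2}, in the same way the earlier corollaries do. Your additional remark about transposing the Cayley table to produce the explicit counterexample is correct and a welcome bit of extra care, but it is not a departure from the paper's method.
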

\begin{proof}
By Theorem \ref{12-parastrophes_1} $F_{26} = (F_{19})^{\ast}$.
\end{proof}

\begin{theorem} \label{Identity_F20}
Quasigroup $(Q, \cdot)$ with identity $F_{20}$  ( $ x(y  x \cdot z) = x(y \cdot  x  z) $) is a group.
\end{theorem}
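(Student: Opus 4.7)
The plan is to reduce identity $F_{20}$ to the identity of associativity by invoking the cancellation property that holds in every quasigroup, and then to apply Lemma \ref{Group_Unit}.

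First I would note that since $(Q, \cdot)$ is a quasigroup it enjoys left cancellation, so the equality $x \cdot (yx \cdot z) = x \cdot (y \cdot xz)$ implies $yx \cdot z = y \cdot xz$ for all $x, y, z \in Q$. Since $x, y, z$ range independently over $Q$, this last equation is precisely the identity of associativity (after renaming variables). Hence $(Q, \cdot)$ is an associative quasigroup.

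Next I would apply Lemma \ref{Group_Unit} to conclude that $(Q, \cdot)$ is a group. This is the same strategy used in the proof of Theorem \ref{F_5_F10}, where right cancellation was used to strip an outer factor before applying Lemma \ref{Group_Unit}; here the situation is even more direct, since the outer factor is already the same variable on both sides.

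There is no real obstacle: the proof is essentially a one-line cancellation argument followed by a citation of Lemma \ref{Group_Unit}. The only thing to verify carefully is that after cancellation the resulting identity is literally $(yx)z = y(xz)$ with three independent free variables, which it is.
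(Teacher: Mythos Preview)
Your proposal is correct and follows exactly the paper's own proof: left cancellation of the outer $x$ yields the associative law, after which Lemma~\ref{Group_Unit} gives the conclusion. The reference to Theorem~\ref{F_5_F10} as a parallel is apt, since the paper uses the same cancellation-then-Lemma~\ref{Group_Unit} pattern there.
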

\begin{proof}
If we perform  cancellation from the left in identity  $ x(y  x \cdot z) = x(y \cdot  x  z) $, then we have $ y  x \cdot z = y \cdot  x  z $. Further we can apply Lemma \ref{Group_Unit}.
\end{proof}

\begin{corollary} \label{identity_F_25}
Quasigroup $(Q, \cdot) $ with identity $F_{25}$ ( $ (yx \cdot z)x = (y \cdot x z)x $)
is a group.
\end{corollary}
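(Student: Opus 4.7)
The plan is to mirror exactly the pattern used for the preceding corollaries (for instance Corollary \ref{identity_F_28} or Corollary \ref{identity_F_26}), namely to reduce the claim to an already established theorem about the $(12)$-parastrophic identity together with the fact that the group property is preserved under $(12)$-parastrophe.

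First I would invoke Lemma \ref{12-parastrophes_1}, which records the tabulated correspondence $(F_{20})^{\ast} = F_{25}$. Thus, if $(Q, \cdot)$ satisfies $F_{25}$, then its $(12)$-parastroph $(Q, \ast)$ defined by $x\ast y = y\cdot x$ satisfies $F_{20}$. Next I would apply Theorem \ref{Identity_F20}, which asserts that any quasigroup satisfying $F_{20}$ is a group, to conclude that $(Q, \ast)$ is a group.

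Finally I would appeal to Lemma \ref{Group_parastroph}, which says that the $(12)$-parastroph of a group is a group; since the $(12)$-parastroph operation is an involution on groupoid structures, the $(12)$-parastroph of $(Q, \ast)$ is $(Q, \cdot)$ itself, and hence $(Q, \cdot)$ is a group, as required.

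There is essentially no obstacle here: the work has been done in Theorem \ref{Identity_F20} and in the bookkeeping lemmas \ref{12-parastrophes_1} and \ref{Group_parastroph}. The only thing one has to be a little careful about is stating the argument in the correct direction, namely that the identity $F_{25}$ on $(Q, \cdot)$ translates into the identity $F_{20}$ on the parastroph $(Q, \ast)$, and not the other way round.
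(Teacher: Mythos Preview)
Your proposal is correct and follows exactly the paper's approach: the paper's proof is the single line ``By Theorem \ref{12-parastrophes_1} $F_{25} = (F_{20})^{\ast}$,'' and your version simply spells out the implicit appeals to Theorem \ref{Identity_F20} and Lemma \ref{Group_parastroph} that make this one-liner work.
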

\begin{proof}
By Theorem \ref{12-parastrophes_1} $F_{25} = (F_{20})^{\ast}$.
\end{proof}

\begin{theorem} \label{Identity_F31}
Quasigroup $(Q, \cdot)$ with identity $F_{31}$    ($ yx \cdot xz = (yx \cdot x)z $) is a group.
\end{theorem}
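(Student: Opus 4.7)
The plan is to follow the same substitution-to-associativity pattern used for $F_{11}$ and $F_{14}$. In the identity $F_{31}$, namely $yx \cdot xz = (yx \cdot x)z$, the composite subterm $yx$ appears in the same position on both sides, and the variable $y$ occurs nowhere else. The key observation (Lemma \ref{Bijection}, in its one-variable form recalled in the proof of that lemma) is that for each fixed $x \in Q$, the map $y \mapsto yx$ is a bijection of $Q$, so as $y$ ranges over $Q$ the term $yx$ already ranges over all of $Q$.

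Concretely, I would set $t := yx$. For any $t, x \in Q$ one can take $y = t/x$ and recover the identity with $t$ in place of $yx$. The identity then becomes
\begin{equation*}
t \cdot xz = (t \cdot x) \, z,
\end{equation*}
which holds for all $t, x, z \in Q$. This is exactly the associative law on $(Q, \cdot)$.

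With associativity established, I would invoke Lemma \ref{Group_Unit} to conclude that $(Q, \cdot)$ is a group. There is no real obstacle here: the only thing to be careful about is the legitimacy of the substitution $t := yx$, but this is precisely what Lemma \ref{Bijection} licenses, since $x$ is still present as an independent variable on both sides after the substitution and no extra constraint is imposed on $t$. The corollary for $F_{34} = (F_{31})^{\ast}$ would then follow immediately from Lemmas \ref{12-parastrophes_1} and \ref{Group_parastroph}, in complete analogy with the treatment of $F_{24}$, $F_{21}$, and $F_{25}$.
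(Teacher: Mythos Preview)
Your proof is correct and follows exactly the same approach as the paper: substitute $t := yx$ (justified by Lemma \ref{Bijection}), obtain the associative law $t\cdot xz = tx\cdot z$, and apply Lemma \ref{Group_Unit}. If anything, your version is more carefully written, since the paper's printed proof contains a typo (it displays the resulting identity as ``$t\cdot xz = t\cdot xz$'') and omits the explicit appeal to Lemma \ref{Group_Unit}.
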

\begin{proof}
If we denote term $yx$ as $t$, then from identity $F_{31}$   we obtain identity of associativity $ t \cdot xz = t \cdot xz $.
\end{proof}

\begin{corollary} \label{identity_F_34}
Quasigroup $(Q, \cdot) $ with identity $F_{34}$ ($ yx \cdot xz = y( x \cdot  x z) $)
is a group.
\end{corollary}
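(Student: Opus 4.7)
The plan is to piggyback on Theorem \ref{Identity_F31} via the $(12)$-parastrophe correspondence already catalogued in Lemma \ref{12-parastrophes_1}. This is exactly the pattern used in the preceding corollaries (for $F_{21}$, $F_{22}$, $F_{23}$, $F_{24}$, $F_{25}$, etc.), so I would follow the same template verbatim.

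First I would invoke Lemma \ref{12-parastrophes_1} to record the key identification $(F_{31})^{\ast} = F_{34}$. Hence if $(Q, \cdot)$ satisfies $F_{34}$, then its $(12)$-parastrophe $(Q, \overset{(12)}{\cdot})$ satisfies $F_{31}$. Second, Theorem \ref{Identity_F31} tells us that any quasigroup satisfying $F_{31}$ is a group, so $(Q, \overset{(12)}{\cdot})$ is a group. Third, Lemma \ref{Group_parastroph} (being a group is preserved under $(12)$-parastrophes) gives that the $(12)$-parastrophe of $(Q, \overset{(12)}{\cdot})$, namely $(Q, \cdot)$ itself, is a group.

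There is no real obstacle: the heavy lifting has already been done in Theorem \ref{Identity_F31}, and the transport across parastrophes is immediate from Lemmas \ref{12-parastrophes_1} and \ref{Group_parastroph}. The only thing to be careful about is orienting the parastrophe correctly — $F_{34}$ is obtained from $F_{31}$ by the operation swap \eqref{HSTY_1}, not the other way round — but the symmetric form of Lemma \ref{Group_parastroph} makes this irrelevant. Accordingly, the written-out proof can be a single citation sentence of the form: \emph{By Lemma \ref{12-parastrophes_1} $F_{34} = (F_{31})^{\ast}$, and the claim follows from Theorem \ref{Identity_F31} together with Lemma \ref{Group_parastroph}.}
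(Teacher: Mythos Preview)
Your proposal is correct and matches the paper's own proof essentially verbatim: the paper simply writes ``By Theorem \ref{12-parastrophes_1} $F_{34} = (F_{31})^{\ast}$'' and leaves the rest implicit. Your version is in fact slightly more explicit, since you spell out the appeal to Lemma \ref{Group_parastroph}, which the paper silently assumes.
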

\begin{proof}
By Theorem \ref{12-parastrophes_1} $F_{34} = (F_{31})^{\ast}$.
\end{proof}

\begin{theorem} \label{Identity_F32}
Quasigroup $(Q, \cdot)$ with identity $F_{32}$    ($ yx \cdot xz = (y\cdot x x)z $) is a group.
\end{theorem}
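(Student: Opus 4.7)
The plan is to reduce $F_{32}$ to the identity of associativity and then apply Lemma \ref{Group_Unit}. The route is: first produce a left unit $f$, next show that $h_x := x\backslash x$ is also a left unit (hence $h_x = f$ for every $x$, so $f$ is in fact a two-sided unit), then derive flexibility, and finally associativity. The main obstacle is the step that identifies $h_x$ with $f$: it requires a compound substitution inside $F_{32}$ together with a careful application of Lemma \ref{Bijection}.

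First I would produce a left unit. Substituting $y = x/(xx)$ into $F_{32}$ makes the bracket $y\cdot xx$ collapse to $x$, so the identity reduces to $\bigl((x/(xx))\cdot x\bigr)\cdot xz = xz$. By Lemma \ref{Bijection}, $xz$ ranges over all of $Q$ as $z$ does, so $f := (x/(xx))\cdot x$ is a left unit of $(Q,\cdot)$; since left units in a quasigroup are unique (by right cancellation), this $f$ is one and the same element for all $x$.

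The crucial step is to show $h_x = f$. I would first record two specializations of $F_{32}$. With $z = h_x$, using $x h_x = x$, we get $(yx)\cdot x = (y\cdot xx)\cdot h_x$; with $z = x$ we get $(yx)(xx) = (y\cdot xx)\cdot x$. Then I would apply $F_{32}$ with $y$ replaced by $yx$, obtaining $((yx)\cdot x)\cdot(xz) = ((yx)(xx))\cdot z$, and substitute the two specializations on either side. Writing $s := y\cdot xx$, which by Lemma \ref{Bijection} ranges over all of $Q$, this rewrites as $(s h_x)(xz) = (sx)\cdot z$ for all $s,x,z\in Q$. Putting $s = f$ collapses both sides to $h_x(xz) = xz$; by Lemma \ref{Bijection} applied to $xz$, this says that $h_x$ acts as a left unit on all of $Q$, so uniqueness of the left unit gives $h_x = f$. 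Consequently $xf = x$ for every $x$, and $f$ is also a right unit.

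It then remains to derive associativity. With $f$ a two-sided unit, setting $z = f$ in $F_{32}$ yields the flexibility identity $(yx)\cdot x = y\cdot xx$. Substituting this into $F_{32}$ gives $(yx)(xz) = ((yx)\cdot x)\cdot z$; letting $t := yx$ and appealing once more to Lemma \ref{Bijection}, we obtain the associative law $t\cdot(xz) = (tx)\cdot z$. Lemma \ref{Group_Unit} then completes the proof that $(Q,\cdot)$ is a group. Once the identification $h_x = f$ is achieved, flexibility and associativity drop out essentially for free, so the whole weight of the argument sits on the compound substitution in the third paragraph.
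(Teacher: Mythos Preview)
Your proof is correct and follows the same four-step outline as the paper: first a left unit $f=(x/(xx))\cdot x$, then a two-sided unit, then the law $(yx)\cdot x = y\cdot xx$, and finally associativity via the substitution $t=yx$ and Lemma~\ref{Bijection}.

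The one place where your argument genuinely differs from the paper is the passage from ``left unit'' to ``two-sided unit''. The paper's step here is cryptic: it substitutes an element $e$ that has not been introduced and concludes $e=f$ in one line. Your route---specializing $F_{32}$ at $z=h_x$ and $z=x$, feeding both into the instance with $y\mapsto yx$, and then collapsing with $s=f$ to obtain $h_x\cdot(xz)=xz$---is a clean, fully justified replacement for that step. It costs a little more writing but removes any doubt about why the local right unit $h_x=x\backslash x$ must equal $f$. After that, your derivation of $(yx)\cdot x=y\cdot xx$ from $z=f$ and the reduction to associativity match the paper exactly.

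One small remark: when you invoke Lemma~\ref{Bijection} for ``$s=y\cdot xx$ ranges over all of $Q$'', you are really using that $R_{xx}$ is a bijection (one variable fixed), which is the quasigroup axiom rather than Lemma~\ref{Bijection} as stated; the point is of course valid, just cite it as cancellation/translation bijectivity.
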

\begin{proof}
We prove that quasigroup $(Q, \cdot)$ with identity $F_{32}$ has left unit. In identity  $F_{32}$ $ yx \cdot xz = (y\cdot x x)z $ we change  $y \rightarrow x\slash (zz)$, $x\rightarrow z$, $z\rightarrow y$ and obtain
\begin{equation} \label{F32_1}
((x\slash zz)zz)y = ((x\slash zz)z)\cdot zy,
\end{equation}
and after application of identity (\ref{(2)}) we have
\begin{equation} \label{F32_2}
xy = ((x\slash zz)z)\cdot zy.
\end{equation}
If we denote term $zy$ by the letter $t$, then $y=z\backslash t$. And we can rewrite equality
(\ref{F32_2}) in the form
\begin{equation} \label{F32_3}
x(z\backslash t) = ((x\slash zz)z)\cdot t.
\end{equation}
If we put  $x=z$ in equality (\ref{F32_3}), then we have
\begin{equation} \label{F32_4}
x(x\backslash t) \overset{(\ref{(1)})}{=} t = ((x\slash xx)x)\cdot t.
\end{equation}
We have proved that quasigroup $(Q, \cdot)$ with identity $F_{32}$ has left unit $f = ((x\slash xx)x)$.

%%Further we use proof by contradiction.

If we substitute  $x=e$ in identity $F_{32}$, then we have
\begin{equation} \label{F32_5}
y \cdot ez = yz, ez=z, e=f.
\end{equation}
Since in quasigroup $(Q, \cdot)$ there exists left unit, then there exists
right unit too.

If in identity $F_{32}$ we put $z=e$, then we have
\begin{equation} \label{F32_6}
yx \cdot x = y\cdot x x.
\end{equation}
If we apply identity (\ref{F32_6}) to the right side of identity $F_{32}$, then we have
\begin{equation} \label{F32_7}
yx \cdot xz = (yx\cdot x)z.
\end{equation}
In order to prove that quasigroup with identity $F_{32}$ is a group  we can denote term $yx$ in identity (\ref{F32_7})  by the letter $t$ and use Lemmas \ref{Bijection} and  \ref{Group_Unit}.
\end{proof}

\begin{corollary} \label{identity_F_33}
Quasigroup $(Q, \cdot) $ with identity $F_{33}$ $ yx \cdot xz = y( x x \cdot z) $ is a group.
\end{corollary}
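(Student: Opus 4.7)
The plan is to reduce the claim about $F_{33}$ to Theorem \ref{Identity_F32} via $(12)$-parastrophe, exactly as the other corollaries in this section do. First, I would invoke Lemma \ref{12-parastrophes_1}, which records the equality $(F_{32})^{\ast} = F_{33}$, so that the identity $yx \cdot xz = y(xx \cdot z)$ holding in $(Q, \cdot)$ translates, via the rule $x \ast y = y \cdot x$ of (\ref{HSTY_1}), into the identity $F_{32}$ holding in the parastrophe $(Q, \ast)$.

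Next I would apply Theorem \ref{Identity_F32} to $(Q, \ast)$: since $(Q, \ast)$ is a quasigroup satisfying $F_{32}$, that theorem tells us $(Q, \ast)$ is a group. Finally, Lemma \ref{Group_parastroph} says that the $(12)$-parastrophe of a group is again a group; since $(Q, \cdot)$ is (up to isomorphism) the $(12)$-parastrophe of $(Q, \ast)$, we conclude that $(Q, \cdot)$ itself is a group.

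There is no real obstacle here: the argument is a pure transport along the $(12)$-parastrophe, and every ingredient has already been established earlier in the paper (Lemmas \ref{12-parastrophes_1}, \ref{12-parastrophes_2}, \ref{Group_parastroph} and Theorem \ref{Identity_F32}). The one thing to be mildly careful about is the direction of the translation: one must check that passing identities through $(\cdot) \mapsto (\ast)$ and then recognizing $(Q, \cdot)$ as the $(12)$-parastrophe of $(Q, \ast)$ is an involution, which is immediate from (\ref{HSTY_1}).
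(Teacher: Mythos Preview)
Your proof is correct and follows exactly the same approach as the paper, which simply cites the parastrophe relation $(F_{32})^{\ast} = F_{33}$ from Lemma \ref{12-parastrophes_1} and implicitly relies on Theorem \ref{Identity_F32} and Lemma \ref{Group_parastroph}. Your version just spells out the transport along the $(12)$-parastrophe more explicitly; the only stray remark is the mention of Lemma \ref{12-parastrophes_2}, which you do not actually need here.
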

\begin{proof}
By Theorem \ref{12-parastrophes_1} $F_{33} = (F_{32})^{\ast}$.
\end{proof}

\begin{theorem} \label{Identity_F35}
Quasigroup $(Q, \cdot)$ with identity $F_{35}$    $ (yx \cdot x) z = (y\cdot xx)z $ does not have left unit and it has right unit element.
\end{theorem}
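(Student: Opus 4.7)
The plan is to first right-cancel $z$ from $F_{35}$---legitimate in any quasigroup---to reduce it to the shorter identity $yx\cdot x = y\cdot xx$, and then treat existence of a right unit and nonexistence of a left unit separately.

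For existence of a right unit, I would fix an arbitrary $a\in Q$ and set $e:=a\backslash a$, so that $ae=a$ by (\ref{(1)}). Substituting $y=a$ and $x=e$ in the reduced identity yields $ae\cdot e = a\cdot ee$, i.e.\ $a\cdot e = a\cdot e^{2}$, and left cancellation then gives $e=e^{2}$; thus the local right unit $e$ is idempotent. Substituting $x=e$ alone in the reduced identity (with $y$ arbitrary) next gives $(ye)\cdot e = y\cdot ee = y\cdot e$. As $y$ ranges over $Q$ the element $w:=ye$ also ranges over $Q$ by Lemma \ref{Bijection}, so $w\cdot e = w$ for every $w\in Q$, establishing that $e$ is a right unit.

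For nonexistence of a left unit, I would exhibit an explicit small Cayley table, in the style of the counterexamples already given in the paper for identities $F_{9}$, $F_{15}$, $F_{16}$ and $F_{19}$. Such a table must simultaneously be Latin, satisfy the operator form $R_{x}^{2}=R_{xx}$ of the reduced identity for every $x$, and have no row equal to the identity permutation; a direct Mace4 search, as mentioned in the introduction, produces a suitable example of order $3$ or $4$. The step I expect to be the main obstacle is producing this table: the obvious three-element quasigroups reused for nearby theorems do not themselves satisfy $F_{35}$ (the diagonal $x\mapsto xx$ together with the Latin constraint forces strong compatibilities with the rows), so a genuinely new counterexample must be located and then verified by checking the finitely many pairs $(y,x)$ against the reduced identity and confirming that none of the rows spells out $0,1,2,\dots$ in order.
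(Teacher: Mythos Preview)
Your argument for the right unit is correct and is actually tidier than the paper's. Both proofs begin by cancelling $z$ to reach the reduced identity $yx\cdot x=y\cdot xx$, and both pivot on showing that the local right unit $e=a\backslash a$ is idempotent; but you obtain $e\cdot e=e$ with a single substitution ($y=a$, $x=e$) followed by one left cancellation and then finish in one more line, whereas the paper arrives at the same conclusion through a longer chain (equations (\ref{F35_0})--(\ref{F35_6})) that uses both division operations $\backslash$ and $/$.

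For the absence of a left unit your plan---exhibit an explicit counterexample, located by Mace4---is exactly the paper's method, and your instinct that this is the delicate step is well founded. However, your expectation of an example of order $3$ or $4$ is too optimistic: at those orders every quasigroup satisfying $R_{x}^{2}=R_{xx}$ is forced to be a loop. With the right unit normalised to $0$ (so that $R_{0}$ is the identity permutation and $0\cdot 0=0$), a short case analysis on the non-identity permutations $R_{1}$ of a $3$- or $4$-element set shows that $R_{1}(0)\neq 1$ always leads either to $R_{1}^{2}\neq R_{1\cdot 1}$ or to a repeated row or column entry; hence $0\cdot 1=1$, and by the evident symmetry $0\cdot x=x$ for every $x$. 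The paper's counterexample is of order $6$, so you will need to let your Mace4 search run a little higher than you anticipated.
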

\begin{proof}
Prove that quasigroup $(Q, \cdot)$ with identity $F_{35}$ has right unit.
In identity $F_{35}$ we make cancellation from the right side and we have:
\begin{equation} \label{F35_0}
yx \cdot x = y\cdot xx.
\end{equation}
If in identity (\ref{F35_0}) we put $yx=t$, then $t\slash x= y$. Using this substitution we can rewrite identity (\ref{F35_0}) as follows:
\begin{equation} \label{F35_3}
tx = (t\slash x) \cdot xx.
\end{equation}

Suppose that    $x:= y \backslash x$ in identity $F_{35}$. Then  we have:
\begin{equation} \label{F35_1}
(y(y \backslash x) \cdot (y \backslash x)) z \overset{(\ref{(1)})}{=} (x \cdot (y \backslash x)) z =  (y\cdot (y \backslash x)(y \backslash x))z.
\end{equation}
After cancellation in identity (\ref{F35_1}) from the right side we have:
\begin{equation} \label{F35_2}
x \cdot (y \backslash x)  =  y\cdot (y \backslash x)(y \backslash x).
\end{equation}
From identity (\ref{F35_2}) using the operation of right division \lq\lq $\backslash $ \rq\rq we have
\begin{equation} \label{F35_4}
 (y \backslash x)(y \backslash x)   =  y\backslash (x \cdot (y \backslash x)).
\end{equation}
If we put  $x=y$ in identity (\ref{F35_4}), then we have:
\begin{equation} \label{F35_5}
 (x \backslash x)(x \backslash x)   =  x\backslash (x (x \backslash x)) \overset{(\ref{(1)})}{=} x \backslash x.
\end{equation}
In identity (\ref{F35_3}) we put $x:= z\backslash z$ for some $z\in Q$.
Then we have:
\begin{equation} \label{F35_6}
\begin{split}
& t(z\backslash z) = (t\slash (z\backslash z)) \cdot (z\backslash z)(z\backslash z) \overset{(\ref{F35_5})} {=}\\
&(t\slash (z\backslash z)) \cdot (z\backslash z)\overset{(\ref{(2)})}{=} t.
\end{split}
\end{equation}

The following example demonstrates that quasigroup $(Q, \cdot)$ with identity $F_{35}$ does not have left unit.

\[
\begin{array}{c|cccccc}
\cdot & 0 & 1 & 2 & 3 & 4 & 5\\
\hline
    0 & 1 & 0 & 2 & 3 & 4 & 5 \\
    1 & 0 & 1 & 3 & 2 & 5 & 4 \\
    2 & 3 & 2 & 4 & 5 & 0 & 1 \\
    3 & 2 & 3 & 5 & 4 & 1 & 0 \\
    4 & 5 & 4 & 0 & 1 & 2 & 3 \\
    5 & 4 & 5 & 1 & 0 & 3 & 2
\end{array}
\]
\end{proof}

\begin{corollary} \label{identity_F_40}
Quasigroup $(Q, \cdot) $ with identity $F_{40}$ $ y (xx\cdot z)  = y (x\cdot xz) $ has left unit and it does not have right unit element.
\end{corollary}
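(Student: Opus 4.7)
The plan is to imitate the pattern already used for every previous corollary of this type (e.g.\ Corollary \ref{identity_F_30}, Corollary \ref{identity_F_29}, Corollary \ref{identity_F_26}): invoke Lemma \ref{12-parastrophes_1} together with Lemma \ref{12-parastrophes_2}. The entry in Lemma \ref{12-parastrophes_1} tells us that $F_{40} = (F_{35})^{\ast}$, so a quasigroup $(Q,\cdot)$ satisfies $F_{40}$ if and only if its $(12)$-parastrophe $(Q,\ast)$, where $x \ast y = y \cdot x$, satisfies $F_{35}$.

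Theorem \ref{Identity_F35} has already established that a quasigroup with identity $F_{35}$ has a right unit but no left unit. Lemma \ref{12-parastrophes_2} says that passing to the $(12)$-parastrophe interchanges the roles of left and right units. Therefore the right unit of $(Q,\ast)$ produced in Theorem \ref{Identity_F35} becomes a left unit of $(Q,\cdot)$, and the absence of a left unit in $(Q,\ast)$ becomes the absence of a right unit in $(Q,\cdot)$. This yields both halves of the conclusion simultaneously.

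For the explicit witness that there need not be a right unit, one may simply transpose the $6\times 6$ Cayley table exhibited inside the proof of Theorem \ref{Identity_F35}; the result is a quasigroup satisfying $F_{40}$ that has no right unit, and whose left unit can be read off the first column of the transposed table.

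There is no real obstacle here, since everything is a formal consequence of Lemmas \ref{12-parastrophes_1} and \ref{12-parastrophes_2} together with Theorem \ref{Identity_F35}; the only thing to double-check is the direction of the parastrophic correspondence, namely that \emph{left} on one side corresponds to \emph{right} on the other, which is exactly the content of Lemma \ref{12-parastrophes_2}.
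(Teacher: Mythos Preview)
Your proposal is correct and follows exactly the paper's approach: the paper's proof consists of the single line ``By Theorem \ref{12-parastrophes_1} $F_{35} = (F_{40})^{\ast}$,'' which is precisely the parastrophic duality argument you spell out in detail using Lemmas \ref{12-parastrophes_1} and \ref{12-parastrophes_2} together with Theorem \ref{Identity_F35}. Your additional remark about transposing the $6\times 6$ Cayley table to obtain an explicit witness is not in the paper but is a harmless (and helpful) elaboration of the same idea.
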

\begin{proof}
By Theorem \ref{12-parastrophes_1} $F_{35} = (F_{40})^{\ast}$.
\end{proof}

\begin{theorem} \label{Identity_F36}
Quasigroup $(Q, \cdot)$ with identity $F_{36}$ (RC \,\, identity)    $ (yx \cdot x) z = y(xx \cdot z) $ has left unit and it does not have right unit element.
\end{theorem}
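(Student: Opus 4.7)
The plan is to prove existence of a left unit by a short chain of substitutions that collapses $F_{36}$ to a local-unit equation, and to rule out a right unit by exhibiting a small counterexample.

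For the left unit, I would first substitute $z \mapsto xx\backslash z$ in $F_{36}$, so that the right-hand side $y(xx\cdot(xx\backslash z))$ reduces by (\ref{(1)}) to $yz$, giving
\begin{equation*}
(yx\cdot x)(xx\backslash z) = yz.
\end{equation*}
A second substitution $y \mapsto y/x$, combined with (\ref{(2)}), removes the outer factor $x$ from $(y/x)x\cdot x$ and yields
\begin{equation*}
(yx)(xx\backslash z) = (y/x)\cdot z.
\end{equation*}
Finally, specializing $y = x$ and applying (\ref{(1)}) once more produces
\begin{equation*}
z = (x/x)\cdot z \qquad \text{for all } x, z \in Q.
\end{equation*}
Thus $x/x$ acts as a left unit on the whole of $Q$; using right cancellation one sees that $x/x$ is independent of $x$, so this common value $f$ is a global left unit of $(Q,\cdot)$.

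For the failure of a right unit, I would exhibit a small finite quasigroup satisfying $F_{36}$ in which no element $e$ fulfills $xe = x$ for every $x$. A natural candidate is the $3$-element Cayley table already used in Theorem~\ref{Identity_F7}: there every square $x\cdot x$ equals the left unit, so $xx\cdot z = z$ and $F_{36}$ reduces to $(yx\cdot x)z = yz$, equivalently $yx\cdot x = y$, which is a routine row-by-row verification; inspection of the columns then shows that none of the three elements is a right unit.

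The main obstacle, as I see it, is purely heuristic: one has to spot the correct pair of substitutions $z \mapsto xx\backslash z$ and $y \mapsto y/x$, performed in that order, to strip away the two copies of $x$ on the right and expose $(x/x)\cdot z$ on the left. Once these substitutions are identified, both the algebraic derivation of the left unit and the verification of the counterexample are mechanical.
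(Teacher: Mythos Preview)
Your argument is correct. The counterexample you propose is exactly the one the paper uses, and your verification strategy (every square equals the left unit, so $F_{36}$ collapses to $yx\cdot x = y$) is a clean way to confirm it.

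For the left unit, the paper takes a shorter path: it substitutes $y := f_x$ (the local left solution of $f_x x = x$) directly into $F_{36}$, obtaining $(f_x x \cdot x)z = f_x(xx\cdot z)$, i.e.\ $xx\cdot z = f_x(xx\cdot z)$; since $xx\cdot z$ ranges over all of $Q$ as $z$ varies, $f_x$ is a global left unit. Your three-step route via $z\mapsto xx\backslash z$, $y\mapsto y/x$, and then $y=x$ reaches the same conclusion in the form $(x/x)\cdot z = z$, working purely with the equational operations $\backslash$ and $/$ rather than the local-unit notation. The paper's argument is more economical (one substitution instead of three), while yours has the minor advantage of identifying the left unit explicitly as $x/x$ without invoking the $f_x$ convention.
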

\begin{proof}

Quasigroup $(Q, \cdot)$ with identity $F_{36}$ has left  unit. If we put $y:=f_x$ in identity $ (yx \cdot x) z = y(xx \cdot z) $, then we have
\begin{equation} \label{F36_1}
(f_xx \cdot x) z = f_x(xx \cdot z), xx \cdot  z = f_x(xx \cdot z).
\end{equation}
From  equalities (\ref{F36_1}) it follows that quasigroup $(Q, \cdot)$ with identity $F_{36}$ has left unit.

The following example demonstrates that right identity element does not exist.
\[
\begin{array}{c|ccc}
\cdot & 0 & 1 & 2\\
\hline
 0 & 1 & 2 & 0 \\
    1 & 0 & 1 & 2 \\
    2 & 2 & 0 & 1
\end{array}
\]
\end{proof}

\begin{corollary} \label{identity_F_39}
Quasigroup $(Q, \cdot) $ with identity $F_{39}$ (LC\,\, identity ) $ (y \cdot xx) z = y(x\cdot xz) $ does not have left unit and it  has right unit element.
\end{corollary}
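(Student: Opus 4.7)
The plan is to reduce this corollary to Theorem \ref{Identity_F36} exactly as the paper has been doing for every other $(12)$-parastrophe corollary in this section. First I would invoke Lemma \ref{12-parastrophes_1}, which gives $(F_{36})^{\ast} = F_{39}$; that is, $F_{39}$ is simply the $(12)$-parastrophe of $F_{36}$.

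Next I would recall that, by Theorem \ref{Identity_F36}, any quasigroup $(Q, \cdot)$ satisfying $F_{36}$ admits a left unit but has no right unit. Applying Lemma \ref{12-parastrophes_2} to $(Q, \cdot)$ then yields that its $(12)$-parastrophe $(Q, \ast)$ has a right unit; the absence of a left unit in $(Q, \ast)$ would be obtained by the contrapositive of the same lemma, because a left unit in $(Q, \ast)$ would force a right unit in $(Q, \cdot)$, contradicting Theorem \ref{Identity_F36}.

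Since an identity $F$ holds in $(Q, \cdot)$ if and only if $F^{\ast}$ holds in $(Q, \ast)$, the two statements "some quasigroup satisfying $F_{39}$ has a right unit and no left unit" and "some quasigroup satisfying $F_{36}$ has a left unit and no right unit" are equivalent. In particular, the absence of a left unit is witnessed concretely by the $(12)$-parastrophe (mirror image across the main diagonal) of the $3 \times 3$ counterexample in the proof of Theorem \ref{Identity_F36}, while the existence of the right unit is structural.

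I do not anticipate any real obstacle here; the corollary is a bookkeeping consequence of two lemmas already established. The only thing that could go wrong is a typo in Lemma \ref{12-parastrophes_1} — one should double-check that $(F_{36})^{\ast}$ really is $F_{39}$ (it is, per the displayed list of equalities) — after which the proof collapses to a single sentence of the form: "By Theorem \ref{12-parastrophes_1}, $F_{39} = (F_{36})^{\ast}$; now apply Lemma \ref{12-parastrophes_2} to Theorem \ref{Identity_F36}."
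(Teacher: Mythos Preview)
Your approach is correct and is exactly the one the paper uses: the paper's entire proof is the single line ``By Theorem \ref{12-parastrophes_1} $F_{39} = (F_{36})^{\ast}$,'' leaving the application of Lemma \ref{12-parastrophes_2} to Theorem \ref{Identity_F36} implicit. Your write-up is more explicit (and in fact more careful, spelling out the contrapositive for the non-existence of the left unit), but there is no substantive difference.
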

\begin{proof}
By Theorem \ref{12-parastrophes_1} $F_{39} = (F_{36})^{\ast}$.
\end{proof}

\begin{theorem} \label{Identity_F37}
Quasigroup $(Q, \cdot)$ with identity $F_{37}$ (C  identity)    $ (yx \cdot x)z = y(x\cdot xz) $ does not have left unit and it does not have right unit.
\end{theorem}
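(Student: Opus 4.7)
The plan is to exhibit a single finite quasigroup satisfying the C identity $F_{37}$ in which no element serves as a left unit and no element serves as a right unit, thereby establishing both claims simultaneously. This is the same pattern followed in Theorem \ref{Identity_F9} and Theorem \ref{Identity_F15}, where a single $3\times 3$ Cayley table was exhibited to rule out both kinds of unit, and it fits with the authors' methodology of using Mace4 (mentioned in the introduction) to hunt for small counterexamples. The cost of proving existence of no unit by algebraic manipulation, as in Theorem \ref{Identity_F20} or Theorem \ref{Identity_F18}, is not available here because there is no identity to manipulate into associativity, cancellation, or an instance of one of the standard loop laws.

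An auxiliary observation that guides the search: by Lemma \ref{12-parastrophes_1} we have $(F_{37})^{\ast}=F_{37}$, so the class of quasigroups satisfying the C identity is closed under the $(12)$-parastrophe. This means I can restrict the search to commutative (hence self-parastrophic) examples. For such a table, absence of a left unit is, by Lemma \ref{12-parastrophes_2}, equivalent to absence of a right unit, so it suffices to construct a commutative quasigroup satisfying $F_{37}$ in which no row of the Cayley table is the natural ordering of $Q$. If no commutative model of small order exists, the fallback is to present two tables (one with no left unit, one with no right unit), the second of which is obtained as the $(12)$-parastrophe of the first.

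Once a candidate table is on the page, verification is mechanical: check the $|Q|^{3}$ instances of $(yx\cdot x)z=y(x\cdot xz)$ and inspect the rows and columns to confirm no element acts as a left or right unit. I expect the main (in fact only) obstacle to be the search itself: identifying a small quasigroup satisfying the C identity but with no unit element on either side is not obvious by hand, and the cleanest way to produce it is via Mace4, exactly as the authors do elsewhere in the paper. Given that $F_{37}$ imposes relatively mild constraints compared to associativity, a quasigroup of order $3$, $4$, or $5$ should already suffice, and the final write-up will consist of the resulting Cayley table together with the remark that verification is routine.
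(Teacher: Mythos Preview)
Your proposal is correct and matches the paper's approach exactly: the paper simply exhibits a commutative $3\times 3$ quasigroup (namely $0\cdot 0=1$, $0\cdot 1=0$, $0\cdot 2=2$, $1\cdot 1=2$, $1\cdot 2=1$, $2\cdot 2=0$) satisfying $F_{37}$ with no left or right unit, with verification left to the reader. Your auxiliary observation that $(F_{37})^{\ast}=F_{37}$ justifies a commutative search is a nice touch that the paper does not make explicit.
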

\begin{proof}
We give necessary example.
\[
\begin{array}{c|ccc}
\cdot & 0 & 1 & 2\\
\hline
 0 & 1 & 0 & 2 \\
    1 & 0 & 2 & 1 \\
    2 & 2 & 1 & 0
\end{array}
\]
\end{proof}

\begin{theorem} \label{Identity_F38}
Quasigroup $(Q, \cdot)$ with identity $F_{38}$     $ (y \cdot xx) z = y(xx \cdot z) $ is a loop.
\end{theorem}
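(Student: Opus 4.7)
The plan is to observe that identity $F_{38}$, written as $(y \cdot xx)z = y(xx \cdot z)$, expresses precisely the fact that every square $a = xx$ is a two-sided associating element of $(Q, \cdot)$: for each fixed $x \in Q$, setting $a = xx$ yields the single-element associativity
\[
(ya)z = y(az) \quad \text{for all } y, z \in Q.
\]
Once this is in hand, locating both a right and a left unit is a short exercise using the division operations of the quasigroup.

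To produce a right unit, I would choose any $x_{0} \in Q$, put $a = x_{0} x_{0}$, and let $r = a \backslash a$, so that $a \cdot r = a$ by identity (\ref{(1)}). Substituting $z := r$ into the displayed associativity gives $(ya) r = y(ar) = ya$. By Lemma \ref{Bijection}, as $y$ ranges over $Q$ the product $ya$ ranges over all of $Q$, hence $t \cdot r = t$ for every $t \in Q$; that is, $r$ is a right unit. A completely symmetric argument, taking $\ell = a / a$ so that $\ell \cdot a = a$ by identity (\ref{(2)}), and reading the displayed identity as $\ell (az) = (\ell a) z = az$ while letting $z$ vary, produces a left unit $\ell$. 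Hence $(Q, \cdot)$ is a loop.

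I expect no serious obstacle here; the argument is quite robust, since it only uses a single instance of $F_{38}$ (for one arbitrary choice of $x$) together with Lemma \ref{Bijection} and the basic quasigroup identities (\ref{(1)}) and (\ref{(2)}). The self-parastrophic property $(F_{38})^{\ast} = F_{38}$ recorded in Lemma \ref{12-parastrophes_1} is consistent with the symmetry of the argument and could, alternatively, be invoked together with Lemma \ref{12-parastrophes_2} to derive the existence of one unit from the other. The only mild subtlety worth double-checking is that the right unit $r$ defined via $x_{0}$ does not depend on the choice of $x_{0}$, but this is automatic, because once one shows $t \cdot r = t$ for all $t$, uniqueness of the right unit in a quasigroup forces independence of the construction from $x_{0}$.
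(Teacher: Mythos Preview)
Your argument is correct and essentially coincides with the paper's own proof: the paper substitutes $y := f_{xx}$ (i.e., $\ell = a/a$) to obtain the left unit and $z := e_{xx}$ (i.e., $r = a\backslash a$) to obtain the right unit, which is exactly what you do, only written in the local-identity notation $f_a,\,e_a$ rather than via the division operations.
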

\begin{proof}
In identity $F_{38}$ we put $y:= f_{xx}$. Then we have
\begin{equation} \label{F38_1}
(f_{xx} \cdot xx) z = f_{xx}(xx \cdot z), xx \cdot z = f_{xx}(xx \cdot z).
\end{equation}

In identity $F_{38}$ we put $z:= e_{xx}$. Then we have
\begin{equation} \label{F38_1}
(y \cdot xx) e_{xx} = y(xx \cdot e_{xx}), (y \cdot xx) e_{xx} = y \cdot xx.
\end{equation}
Therefore in quasigroup $(Q, \cdot)$ with identity $F_{38}$  there exists right unit.
\end{proof}

The following example demonstrates that quasigroup with identity $F_{38}$ is not  associative; loop with identity $F_{38}$ has  not Lagrange property.

\[
\begin{array}{c|ccccc}
\cdot & 0 & 1 & 2 & 3 & 4\\
\hline
    0 & 0 & 1 & 2 & 3 & 4 \\
    1 & 1 & 0 & 3 & 4 & 2 \\
    2 & 2 & 4 & 0 & 1 & 3 \\
    3 & 3 & 2 & 4 & 0 & 1 \\
    4 & 4 & 3 & 1 & 2 & 0
\end{array}
\]

\begin{theorem} \label{Identity_F41}
Quasigroup $(Q, \cdot)$ with identity $F_{41}$  (LC  identity   $ xx \cdot yz = (x \cdot xy)z $) is a loop.
\end{theorem}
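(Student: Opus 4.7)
The plan is to exhibit a two-sided unit $f\in Q$ via two substitutions in $F_{41}$. Since $F_{41}$ equates two products whose right factors are $yz$ and $z$ respectively, we cannot right-cancel $z$ directly; my strategy is to make these factors coincide by choosing $y$ to be a local left identity for a fixed element.

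First I would pick any $a\in Q$ and let $f_a\in Q$ be the unique element with $f_a\cdot a=a$. Substituting $y:=f_a$ and $z:=a$ in $F_{41}$ gives $xx\cdot a=(x\cdot xf_a)\cdot a$; right-cancelling $a$ yields $xx=x\cdot xf_a$, and left-cancelling $x$ yields $x=xf_a$ for every $x\in Q$. Hence $f:=f_a$ is a (global) right unit of $(Q,\cdot)$.

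Next I would substitute $z:=f$ in $F_{41}$. Since $yf=y$ and $(x\cdot xy)f=x\cdot xy$, the identity collapses to
\begin{equation*}
xx\cdot y=x\cdot xy.
\end{equation*}
Setting $x:=f$ and using $ff=f$ reduces this to $f\cdot y=f\cdot fy$, and left cancellation then gives $y=fy$. Thus $f$ is a left unit as well, and $(Q,\cdot)$ is a loop.

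The only subtle point is the first substitution: the key insight is that $z$ can be turned into a common right factor of both sides of $F_{41}$ by specialising $y$ to a local left identity for $z$. Once this observation is in place, the remainder is a routine application of the cancellativity of quasigroups and requires no Bol--Moufang machinery beyond $F_{41}$ itself.
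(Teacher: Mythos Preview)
Your proof is correct and follows essentially the same strategy as the paper: substitute a local one-sided identity into $F_{41}$ and cancel. The only cosmetic difference is the order---the paper first obtains a left unit directly via $y:=e_x$ (where $xe_x=x$) and then a right unit via $y:=f_z$, whereas you first establish the right unit $f$ and then bootstrap the left unit through the intermediate identity $xx\cdot y = x\cdot xy$.
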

\begin{proof}
We prove that quasigroup $(Q, \cdot)$ with identity $F_{41}$  has left   identity element. In identity $F_{41}$ we put $y:= e_{x}$. Then we have:
\begin{equation} \label{F41_1}
xx \cdot e_xz = (x \cdot xe_x)z, xx \cdot e_xz = x x \cdot z, e_xz =z.
\end{equation}

We prove that quasigroup $(Q, \cdot)$ with identity $F_{41}$  has right   identity element. In identity $F_{41}$ we put $y:= f_{z}$. Then we have:
\begin{equation} \label{F41_1}
xx \cdot f_z z = (x \cdot xf_z)z, \, xx \cdot z = (x  \cdot xf_z)  z, \, xx  = x  \cdot xf_z,\; x  = xf_z.
\end{equation}
\end{proof}
The following example demonstrates that loop with identity $F_{41}$ is not  associative.
\[
\begin{array}{c|cccccc}
\cdot & 0 & 1 & 2 & 3 & 4 & 5\\
\hline
    0 & 0 & 1 & 2 & 3 & 4 & 5 \\
    1 & 1 & 0 & 3 & 5 & 2 & 4 \\
    2 & 2 & 5 & 0 & 4 & 1 & 3 \\
    3 & 3 & 4 & 1 & 0 & 5 & 2 \\
    4 & 4 & 3 & 5 & 2 & 0 & 1 \\
    5 & 5 & 2 & 4 & 1 & 3 & 0
\end{array}
\]

\begin{corollary} \label{identity_F_53}
Quasigroup $(Q, \cdot) $ with identity $F_{53}$ (RC \, identity ) $ yz\cdot xx  = y(z x \cdot x) $ is a loop.
\end{corollary}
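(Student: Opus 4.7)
The plan is to mimic the pattern used throughout this section for handling $(12)$-parastrophic identities, such as Corollaries \ref{identity_F_40}, \ref{identity_F_39}, and \ref{identity_F_28}. The argument is a two-step transfer via $(12)$-parastrophes.

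First, I would invoke Lemma \ref{12-parastrophes_1} to observe that $F_{53} = (F_{41})^{\ast}$. Consequently, a groupoid $(Q, \cdot)$ satisfies $F_{53}$ if and only if its $(12)$-parastrophe $(Q, \ast)$ satisfies $F_{41}$. Since $(Q, \cdot)$ is a quasigroup, so is $(Q, \ast)$, and thus by Theorem \ref{Identity_F41} the parastrophe $(Q, \ast)$ is a loop, i.e., possesses both a left and a right unit.

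Second, I would apply Lemma \ref{12-parastrophes_2}: the right unit of $(Q, \ast)$ yields a left unit of $(Q, \cdot)$, and the left unit of $(Q, \ast)$ yields a right unit of $(Q, \cdot)$. Hence $(Q, \cdot)$ has both a left and a right unit and is itself a loop.

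There is essentially no obstacle: the argument is purely routine and identical in structure to the other parastrophe corollaries above. The only care needed is to cite the correct parastrophe equality from Lemma \ref{12-parastrophes_1} and to recall that Lemma \ref{12-parastrophes_2} swaps the side on which the unit acts, so that \emph{both} units of the parastrophe are necessary to conclude that the original quasigroup is a loop (not merely a one-sided loop).
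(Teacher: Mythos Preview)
Your proof is correct and follows exactly the paper's approach: the paper's entire proof is the single line ``By Theorem \ref{12-parastrophes_1} $F_{53} = (F_{41})^{\ast}$,'' and your proposal simply unpacks this parastrophe transfer in more detail via Lemma \ref{12-parastrophes_2}.
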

\begin{proof}
By Theorem \ref{12-parastrophes_1} $F_{53} = (F_{41})^{\ast}$.
\end{proof}

\begin{theorem} \label{Identity_F42}
Quasigroup $(Q, \cdot)$ with identity $F_{42}$    ($ xx\cdot yz = (xx \cdot y) z $) is a left loop.
\end{theorem}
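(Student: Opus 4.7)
The plan is to prove separately that $(Q,\cdot)$ has a left unit and that it need not have a right unit; together these give exactly the assertion that $(Q,\cdot)$ is a (proper) left loop.

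For the existence of a left unit, I would fix any $x\in Q$ and set $e:=xx$. Identity $F_{42}$ then becomes the restricted associativity
\[
e\cdot yz \;=\; (e\cdot y)\cdot z \qquad \text{for all } y,z\in Q.
\]
Next, choose $y_0:=e\backslash e$, so that $ey_0=e$ by (\ref{(1)}). Substituting $y=y_0$ into the displayed equality yields
\[
e\cdot (y_0 z) \;=\; (ey_0)\cdot z \;=\; e\cdot z,
\]
and left cancellation by $e$ (valid in any quasigroup) gives $y_0 z=z$ for every $z\in Q$. Hence $y_0$ is a left unit of $(Q,\cdot)$. Uniqueness of the left unit in a quasigroup makes the construction independent of the particular choice of $x$.

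For the failure of a right unit, I would exhibit a small Cayley-table counterexample, analogous in style to those used earlier in the paper (see the tables for $F_7$, $F_{36}$, etc.), most likely of order $3$ obtained via Mace4; the verification that the table satisfies $F_{42}$ and lacks a right unit is a routine finite check.

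The argument has no real obstacle: the existence half is a short trick exploiting that, for fixed $x$, the element $xx$ acts as a single parameter and the pair $(y,z)$ still ranges freely over $Q\times Q$. The point worth flagging is that one cannot upgrade $F_{42}$ to full associativity by appealing to Lemma \ref{Bijection}, precisely because of the remark following it: the term $xx$ need not range over all of $Q$, so Lemma \ref{Group_Unit} is not directly available. This is exactly why the conclusion is \emph{left loop} rather than \emph{group}, and it is also why a right unit can consistently be absent.
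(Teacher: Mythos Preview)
Your argument is correct and coincides with the paper's: setting $e=xx$ and substituting $y:=e\backslash e$ is exactly the paper's substitution $y:=e_{xx}$ (the local right unit of $xx$), followed by the same left cancellation, and the paper likewise closes with a $3$-element counterexample for the right unit. Your added remark explaining why Lemma~\ref{Bijection} does not upgrade $F_{42}$ to associativity is a welcome clarification but not a departure from the paper's method.
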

\begin{proof}
We prove that quasigroup $(Q, \cdot)$ with identity $F_{42}$  has left   identity element. In identity $F_{42}$ we put $y:= e_{xx}$. Then we have:
\begin{equation} \label{F42_1}
xx\cdot e_{xx}z = (xx \cdot e_{xx}) z, \; xx\cdot e_{xx}z = xx \cdot z, e_{xx}z = z.
\end{equation}

The following example demonstrates that left  loop with identity $F_{42}$ has
no right  identity element.

\[
\begin{array}{c|ccc}
\cdot & 0 & 1 & 2\\
\hline
   0 & 1 & 2 & 0 \\
    1 & 0 & 1 & 2 \\
    2 & 2 & 0 & 1
\end{array}
\]
\end{proof}

\begin{corollary} \label{identity_F_54}
Quasigroup $(Q, \cdot) $ with identity $F_{54}$  ($ yz\cdot xx  = y(z \cdot x  x) $) is a right loop.
\end{corollary}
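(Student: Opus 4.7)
The plan is to deduce this corollary directly by parastrophe duality, mirroring the pattern already used for every earlier corollary in the paper whose identity arises as an $(12)$-parastrophe of a previously treated identity. First I would invoke Lemma~\ref{12-parastrophes_1}, where the list of equalities records $(F_{42})^{\ast} = F_{54}$. Thus if $(Q,\cdot)$ is a quasigroup satisfying $F_{54}$, its $(12)$-parastrophe $(Q,\ast)$, defined by $x \ast y = y \cdot x$, is a quasigroup satisfying $F_{42}$.

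Next I would apply Theorem~\ref{Identity_F42}: the quasigroup $(Q,\ast)$ possesses a left unit. By Lemma~\ref{12-parastrophes_2}, a left unit in a quasigroup corresponds to a right unit in its $(12)$-parastrophe. Hence $(Q,\cdot)$ has a right unit, i.e.\ it is a right loop, which is exactly the claim.

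The only subtle point worth mentioning is that \emph{right loop} should be understood in the strict sense, so one should also verify that $(Q,\cdot)$ need not admit a left unit, i.e.\ need not be a loop. For that, the counterexample already displayed in the proof of Theorem~\ref{Identity_F42} suffices: taking its $(12)$-parastroph (reflecting the Cayley table across the main diagonal) produces a quasigroup satisfying $F_{54}$ with a right unit but no left unit, by the same Lemma~\ref{12-parastrophes_2}. I do not anticipate any real obstacle; the entire argument is a one-line application of the two parastrophe lemmas, identical in structure to Corollaries~\ref{identity_F_53}, \ref{identity_F_40} and \ref{identity_F_29}.
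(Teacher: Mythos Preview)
Your proposal is correct and follows exactly the paper's own approach: the paper's proof consists of the single line ``By Theorem~\ref{12-parastrophes_1} $F_{54} = (F_{42})^{\ast}$,'' leaving the applications of Theorem~\ref{Identity_F42} and Lemma~\ref{12-parastrophes_2} implicit. Your additional remark about transporting the counterexample via the $(12)$-parastrophe is a welcome clarification that the paper omits.
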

\begin{proof}
By Theorem \ref{12-parastrophes_1} $F_{54} = (F_{42})^{\ast}$.
\end{proof}

\begin{theorem} \label{Identity_F43}
Quasigroup $(Q, \cdot)$ with identity $F_{43}$   ( $ xx\cdot yz  = x(x \cdot yz) $) is a left loop.
\end{theorem}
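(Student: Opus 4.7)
My plan is to reduce identity $F_{43}$ to the two-variable left alternative law and then exploit uniqueness of the left local unit in a quasigroup to manufacture an idempotent element, which will turn out to be a left unit for the whole quasigroup.

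First I would invoke Lemma~\ref{Bijection}: since $yz$ runs through all of $Q$ as $y, z$ vary, identity $F_{43}$ is equivalent to the two-variable identity
\[
xx \cdot u = x(x \cdot u) \qquad \text{for all } x, u \in Q.
\]
Call this simplified identity $(*)$. Next, for any fixed $u \in Q$ let $f_u \in Q$ be the unique left local unit satisfying $f_u u = u$. Substituting $x := f_u$ in $(*)$ collapses the right-hand side to $f_u(f_u u) = f_u u = u$, so $(f_u f_u) \cdot u = u$. By uniqueness of the solution $v$ of the equation $v u = u$, this forces $f_u f_u = f_u$, so $f_u$ is an idempotent.

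Then I would apply $(*)$ once more with $x := f_u$ and an arbitrary $w \in Q$, obtaining $f_u w = (f_u f_u) w = f_u(f_u w)$; left cancellation in the quasigroup yields $w = f_u w$ for every $w$, so $f_u$ is a (global) left unit of $(Q, \cdot)$, i.e.\ $(Q, \cdot)$ is a left loop.

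Since the theorem claims only \emph{left loop} and not \emph{loop}, I would round out the proof with a small counterexample (presumably produced by Mace4, in the style of the paper) of a quasigroup satisfying $F_{43}$ with no right unit. The only genuinely non-routine step is spotting that $f_u$ must be an idempotent; once that observation is in hand the left unit property follows from a single left cancellation, and the absence of a right unit is a matter of exhibiting a table rather than a conceptual obstacle.
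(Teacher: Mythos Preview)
Your proposal is correct and follows essentially the same route as the paper: show that the local left unit $f_u$ is idempotent, then feed that idempotent back into the identity and left-cancel to obtain a global left unit, finishing with a counterexample for the absence of a right unit. The only cosmetic difference is that you first invoke Lemma~\ref{Bijection} to replace $yz$ by a single variable $u$, whereas the paper performs the substitutions $x=y=f_z$ and then $x=f_x$ directly in the three-variable form of $F_{43}$; the underlying idea and the sequence of deductions are the same.
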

\begin{proof}
We prove that quasigroup $(Q, \cdot)$ with identity $F_{43}$  has left   identity element. In identity $F_{43}$ we put $x = y = f_{z}$. Then we have:
\begin{equation} \label{F43_1}
f_{z}f_{z}\cdot f_{z}z  = f_{z}(f_{z} \cdot f_{z}z), \; f_{z}f_{z}\cdot z  = f_z z, \; f_{z}f_{z} = f_z.
\end{equation}

Further, in identity $F_{43}$ we put $x = f_{x}$. Then we have:
\begin{equation} \label{F43_2}
f_{x}f_{x}\cdot yz  = f_{x}(f_{x} \cdot yz), \,  \textrm{using \, equality \, (\ref{F43_1}),} \, f_{x}\cdot yz  = f_{x}(f_{x} \cdot yz),\;
 yz  = f_{x} \cdot yz.
\end{equation}

The following example demonstrates that left  loop with identity $F_{43}$ has
no right  identity element.

\[
\begin{array}{c|cccccc}
\cdot & 0 & 1 & 2 & 3 & 4 & 5\\
\hline
 0 & 1 & 0 & 3 & 2 & 5 & 4 \\
    1 & 0 & 1 & 2 & 3 & 4 & 5 \\
    2 & 2 & 3 & 4 & 5 & 0 & 1 \\
    3 & 3 & 2 & 5 & 4 & 1 & 0 \\
    4 & 4 & 5 & 0 & 1 & 2 & 3 \\
    5 & 5 & 4 & 1 & 0 & 3 & 2
\end{array}
\]
\end{proof}

\begin{corollary} \label{identity_F_51}
Quasigroup $(Q, \cdot) $ with identity $F_{51}$  ($ yz\cdot xx  = (yz\cdot x)x $) is a right loop.
\end{corollary}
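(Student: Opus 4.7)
The plan is to reduce everything to the previous theorem via the $(12)$-parastrophy machinery, exactly as in the preceding corollaries. First I would invoke Lemma \ref{12-parastrophes_1} to note that $F_{51} = (F_{43})^{\ast}$, so the class of quasigroups satisfying $F_{51}$ is precisely the class of $(12)$-parastrophes of quasigroups satisfying $F_{43}$.

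Next I would use Theorem \ref{Identity_F43}, which gives that any quasigroup $(Q,\cdot)$ with identity $F_{43}$ is a left loop. Given such $(Q,\cdot)$ with left unit $f$, its $(12)$-parastroph $(Q,\ast)$ satisfies $F_{51}$ by the previous paragraph; by Lemma \ref{12-parastrophes_2} the element $f$ is a right unit in $(Q,\ast)$. Thus every quasigroup with identity $F_{51}$ possesses a right unit, i.e.\ is a right loop.

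For the fact that a left unit need not exist, I would reuse the $6\times 6$ counterexample given in the proof of Theorem \ref{Identity_F43}. If it had no right unit but its $(12)$-parastroph had a left unit, then applying Lemma \ref{12-parastrophes_2} in the opposite direction would produce a right unit in the original table, a contradiction. Hence the mirror of that $6\times 6$ Cayley table along the main diagonal is a quasigroup satisfying $F_{51}$ with a right unit but no left unit, which completes the verification.

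The step is entirely routine once Lemma \ref{12-parastrophes_1} and Lemma \ref{12-parastrophes_2} are in hand, and there is no genuine obstacle; the only small caution is to apply the parastrophy symmetrically on both sides of the statement (existence of the right unit \emph{and} non-existence of a left unit), so the corollary is strictly equivalent to Theorem \ref{Identity_F43} and not merely implied by one half of it.
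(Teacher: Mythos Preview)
Your proposal is correct and follows exactly the same route as the paper: the paper's proof is the single line ``By Theorem \ref{12-parastrophes_1} $F_{51} = (F_{43})^{\ast}$,'' which implicitly invokes Theorem \ref{Identity_F43} and Lemma \ref{12-parastrophes_2} just as you do. Your additional paragraph on the non-existence of a left unit (via the mirrored $6\times 6$ table) merely makes explicit what the paper leaves to the reader and to Table~1.
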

\begin{proof}
By Theorem \ref{12-parastrophes_1} $F_{51} = (F_{43})^{\ast}$.
\end{proof}

\begin{theorem} \label{Identity_F44}
Quasigroup $(Q, \cdot)$ with identity $F_{44}$   ( $ xx\cdot yz  = x(x y \cdot z) $) is a left loop.
\end{theorem}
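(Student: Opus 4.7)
The plan is to adapt the method used for Theorem~\ref{Identity_F43} to the identity $F_{44}$: $xx \cdot yz = x(xy \cdot z)$. The approach has two stages: derive a left unit by substituting local left units of suitable elements, then produce a small Cayley table witnessing the absence of a right unit.

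For the existence of a left unit, I would first substitute $x := f_y$, where $f_y$ is the element with $f_y \cdot y = y$. The right-hand side of $F_{44}$ becomes $f_y(f_y y \cdot z) = f_y \cdot yz$, so the identity reads $f_y f_y \cdot yz = f_y \cdot yz$, and right-cancellation by $yz$ (justified by Lemma~\ref{Bijection}) forces $f_y f_y = f_y$. With this idempotency in hand, I would fix an arbitrary $a \in Q$, substitute $x := f_a$ into $F_{44}$ to get $f_a f_a \cdot yz = f_a(f_a y \cdot z)$, replace $f_a f_a$ by $f_a$, left-cancel by $f_a$ to obtain $yz = f_a y \cdot z$, and then right-cancel by $z$ to conclude $y = f_a y$ for every $y \in Q$. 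Hence $f_a$ is a (global) left unit, so $(Q, \cdot)$ is a left loop.

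The final step is to verify that \emph{left loop} cannot be strengthened to \emph{loop} by exhibiting a quasigroup satisfying $F_{44}$ with no right unit, in the same style as the tables accompanying Theorems~\ref{Identity_F42} and \ref{Identity_F43}. The main obstacle is locating this counterexample; I expect to rely on Mace4 \cite{MAC_CUNE_MACE} to search for a small Cayley table and then verify $F_{44}$ directly on it.
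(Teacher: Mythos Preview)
Your argument is correct and matches the paper's proof essentially step for step: substitute $x:=f_y$ to obtain $f_yf_y=f_y$, then substitute $x:=f_a$ and cancel to get $f_a y=y$. The paper also supplies the explicit $3$-element counterexample (the same table used for $F_7$, $F_{42}$, and $F_{49}$), so a Mace4 search is unnecessary; one minor remark is that the cancellation you invoke is simply the quasigroup cancellation law rather than Lemma~\ref{Bijection}.
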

\begin{proof}
We prove that quasigroup $(Q, \cdot)$ with identity $F_{44}$  has left   identity element. In identity $F_{44}$ we put $x = f_{y}$. Then we have:
\begin{equation} \label{F44_1}
f_{y}f_{y}\cdot yz  = f_{y}(f_{y}y  \cdot z), \; f_{y}f_{y}\cdot yz  = f_{y}\cdot y z, \; f_{y}f_{y}  = f_{y}.
\end{equation}
Further we put  $x=f_x$ in identity $F_{44}$. Then we have:
\begin{equation} \label{F44_2}
f_{x}f_{x}\cdot yz  = f_{x}(f_{x}y  \cdot z), \textrm{we use (\ref{F44_1})},\:
 f_{x}\cdot yz  = f_{x}(f_{x}y  \cdot z), \;   yz  = f_{x}y  \cdot z, \; y  = f_{x}y.
\end{equation}

The following example demonstrates that left  loop with identity $F_{44}$ has
no right  identity element.
\[
\begin{array}{c|ccc}
\cdot & 0 & 1 & 2\\
\hline
    0 & 1 & 2 & 0 \\
    1 & 0 & 1 & 2 \\
    2 & 2 & 0 & 1
\end{array}
\]
\end{proof}

\begin{theorem} \label{Identity_F45}
Quasigroup $(Q, \cdot)$ with identity $F_{45}$   ( $ (x \cdot xy)z = (xx \cdot y)z $ ) is a left loop.
\end{theorem}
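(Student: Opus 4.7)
The plan is to first apply right cancellation to eliminate $z$. Because every quasigroup has the cancellation property, the identity $(x\cdot xy)z = (xx\cdot y)z$ reduces to the simpler identity
\[
x\cdot xy \;=\; xx\cdot y. \qquad (\ast)
\]
Everything afterwards will be deduced from $(\ast)$.

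Next, I would establish existence of a left unit by the local-to-global trick that already appears in the proofs of Theorems \ref{Identity_F43} and \ref{Identity_F44}. Fix an arbitrary $y\in Q$ and let $f_y\in Q$ be the unique element with $f_y\cdot y = y$; such an element exists because $(Q,\cdot)$ is a quasigroup. Substituting $x:=f_y$ into $(\ast)$ with this same $y$ gives
\[
f_y\cdot f_y y \;=\; f_y f_y \cdot y,
\]
that is, $f_y\cdot y = f_y f_y\cdot y$, i.e.\ $y = f_y f_y\cdot y$, and right cancellation by $y$ yields the idempotency $f_y f_y = f_y$. Then, taking an arbitrary $w\in Q$, substitute $x:=f_y$ and replace $y$ by $w$ in $(\ast)$:
\[
f_y\cdot f_y w \;=\; f_y f_y \cdot w \;=\; f_y\cdot w,
\]
where the second equality uses the idempotency just obtained. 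Left cancellation by $f_y$ now gives $f_y w = w$ for every $w\in Q$, so $f_y$ is a left unit of $(Q,\cdot)$.

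Finally, to show that $(Q,\cdot)$ need not possess a right unit, I would exhibit a small counterexample in the same style as the tables provided for Theorems \ref{Identity_F42}--\ref{Identity_F44}; a Cayley table of order $3$ (e.g.\ of the same cyclic shape used for $F_{44}$) is the natural candidate, since it is easy to verify $(\ast)$ directly on such a table, and such examples can also be produced with Mace4 as the authors have done elsewhere. The conceptual content of the proof is entirely in the argument above; the only delicate step, and the natural place where mistakes could arise, is the finite verification that the proposed counterexample actually satisfies $F_{45}$ and indeed lacks a right-unit column.
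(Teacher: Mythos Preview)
Your argument for the existence of a left unit is correct and is exactly the paper's proof: cancel $z$ on the right to obtain $(\ast)$, substitute $x:=f_y$ to deduce $f_yf_y=f_y$, and substitute again to conclude $f_y w=w$ for all $w$.

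The counterexample you propose, however, does not work. For the order-$3$ table used for $F_{44}$,
\[
\begin{array}{c|ccc}
\cdot & 0 & 1 & 2\\ \hline
0 & 1 & 2 & 0\\
1 & 0 & 1 & 2\\
2 & 2 & 0 & 1
\end{array}
\]
taking $x=y=0$ in $(\ast)$ gives $0\cdot(0\cdot 0)=0\cdot 1=2$ while $(0\cdot 0)\cdot 0=1\cdot 0=0$, so $(\ast)$ fails and hence $F_{45}$ fails. The paper supplies instead a $6$-element example (the same table as for $F_{43}$). So the ``delicate step'' you flagged is genuinely delicate here: the natural small candidate is not a model of $F_{45}$, and a larger example is required.
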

\begin{proof}
We prove that quasigroup $(Q, \cdot)$ with identity $F_{45}$  has left   identity element. In identity $F_{45}$ we make cancellation from the right side.  Then we have:
\begin{equation} \label{F45_1}
x \cdot xy = xx \cdot y.
\end{equation}
In identity (\ref{F45_1}) we put $x=f_y$ and obtain
\begin{equation} \label{F45_2}
f_y \cdot f_yy = f_yf_y \cdot y, f_y  = f_yf_y.
\end{equation}
Further we put  $x=f_x$ in identity (\ref{F45_1}). We have:
\begin{equation} \label{F45_3}
f_x \cdot f_xy = f_xf_x \cdot y, \; \textrm{we use (\ref{F45_2})},\: f_x \cdot f_xy = f_x y, f_xy = y.
\end{equation}

The following example demonstrates that left  loop with identity $F_{45}$ has
no right  identity element.

\[
\begin{array}{c|cccccc}
\cdot & 0 & 1 & 2 & 3 & 4 & 5\\
\hline
    0 & 1 & 0 & 3 & 2 & 5 & 4 \\
    1 & 0 & 1 & 2 & 3 & 4 & 5 \\
    2 & 2 & 3 & 4 & 5 & 0 & 1 \\
    3 & 3 & 2 & 5 & 4 & 1 & 0 \\
    4 & 4 & 5 & 0 & 1 & 2 & 3 \\
    5 & 5 & 4 & 1 & 0 & 3 & 2
\end{array}
\]
\end{proof}

\begin{corollary} \label{identity_F_60}
Quasigroup $(Q, \cdot) $ with identity $F_{60}$  ( $  y(zx \cdot x) = y(z \cdot xx) $ ) is a right loop.
\end{corollary}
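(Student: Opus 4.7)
The plan is to deduce the statement entirely from the $(12)$-parastrophe machinery already assembled, exactly in the style of the preceding corollaries. First I would invoke Lemma \ref{12-parastrophes_1}, which records that $(F_{45})^{\ast} = F_{60}$. This means that if $(Q,\cdot)$ satisfies $F_{60}$, then its $(12)$-parastrophe $(Q,\ast)$, where $x\ast y = y\cdot x$, satisfies $F_{45}$.

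Next I would apply Theorem \ref{Identity_F45}, which tells us that any quasigroup satisfying $F_{45}$ is a left loop, i.e.\ possesses a left unit. Applied to $(Q,\ast)$, this yields an element $f\in Q$ with $f\ast x = x$ for all $x\in Q$.

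Finally I would appeal to Lemma \ref{12-parastrophes_2}: passing back from the parastrophe to $(Q,\cdot)$, a left unit of $(Q,\ast)$ becomes a right unit of $(Q,\cdot)$. Thus the element $f$ satisfies $x\cdot f = x$ for every $x\in Q$, so $(Q,\cdot)$ is a right loop.

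There is no real obstacle here: the whole argument is a three-line chase through Lemmas \ref{12-parastrophes_1}, \ref{12-parastrophes_2} and Theorem \ref{Identity_F45}. The one point to be careful about is merely bookkeeping, namely verifying that the parastrophe correspondence $(F_{45})^{\ast} = F_{60}$ is correctly read off from the list in Lemma \ref{12-parastrophes_1}, and that \emph{left} unit is sent to \emph{right} unit (not the other way round) under the parastrophe, which is precisely the content of Lemma \ref{12-parastrophes_2}.
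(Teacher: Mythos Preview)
Your proposal is correct and follows exactly the same approach as the paper, which proves the corollary in a single line by invoking the parastrophe relation $F_{60}=(F_{45})^{\ast}$ from Lemma~\ref{12-parastrophes_1}. You have simply made explicit the implicit appeals to Theorem~\ref{Identity_F45} and Lemma~\ref{12-parastrophes_2} that underlie that one-line proof.
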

\begin{proof}
By Theorem \ref{12-parastrophes_1} $F_{60} = (F_{45})^{\ast}$.
\end{proof}

\begin{theorem} \label{Identity_F46}
Quasigroup $(Q, \cdot)$ with identity $F_{46}$ (LC  identity) $ (x\cdot xy)z = x(x \cdot yz) $ does not have left and right unit.
\end{theorem}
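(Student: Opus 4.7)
The plan is to settle both non-existence claims at once with a single finite counterexample, following the template of Theorems \ref{Identity_F9}, \ref{Identity_F15}, and \ref{Identity_F37}: I would exhibit a small quasigroup $(Q,\cdot)$ whose Cayley table satisfies $(x\cdot xy)z = x(x\cdot yz)$ while containing no row equal to the header row and no column equal to the header column.

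First I would search among $3\times 3$ (and, if necessary, $4\times 4$) Latin squares for one satisfying $F_{46}$; given the authors' use of Mace4 elsewhere in this paper, this is the expected tool. A natural first candidate is the symmetric $3\times 3$ Cayley table already reused for $F_9$, $F_{15}$, and $F_{37}$, since its rows and columns manifestly differ from the identity permutation and it is known to support several related weak Bol-Moufang identities. Once a candidate table is fixed, verification is a purely mechanical check of the $|Q|^3$ triples $(x,y,z)$, after which absence of a left unit (no row equals the header row) and absence of a right unit (no column equals the header column) can be read off directly.

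The main obstacle is not technical but combinatorial: one must confirm that $F_{46}$ is weak enough to admit a unit-free model, which is precisely the content of the theorem, in contrast with the closely related $F_{41}$ of Theorem \ref{Identity_F41} which already forces a loop structure. The subtle point is that $F_{46}$ differs from $F_{41}$ only by the associator pattern on the right-hand side, so the substitutions $y := e_x$ or $x := f_z$ that succeed for $F_{41}$ no longer produce a unit equation after cancellation; this is why a purely syntactic derivation of a unit is blocked, and a semantic (model-theoretic) counterexample is the right route. Once a suitable Latin square is produced, no further work is required beyond displaying it.
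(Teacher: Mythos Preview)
Your proposal is correct and matches the paper's approach exactly: the paper's entire proof consists of displaying the very $3\times 3$ table you single out as the natural candidate (the one already used for $F_9$, $F_{15}$, and $F_{37}$), with no accompanying text. Your write-up is in fact more thorough than the paper's, since you explain why a syntactic derivation of a unit fails and what needs to be checked on the table.
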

\begin{proof}
\[
\begin{array}{c|ccc}
\cdot & 0 & 1 & 2\\
\hline
    0 & 1 & 0 & 2 \\
    1 & 0 & 2 & 1 \\
    2 & 2 & 1 & 0
\end{array}
\]
\end{proof}

\begin{corollary} \label{identity_F_56}
Quasigroup $(Q, \cdot) $ with identity $F_{56}$ (RC identity)  $ (yz \cdot x)x  = y (zx\cdot x) $ does not have left and right unit.
\end{corollary}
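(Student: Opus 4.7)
The plan is to repeat the template used in every prior corollary of this section and avoid any direct structural reasoning on $F_{56}$. By Lemma~\ref{12-parastrophes_1} we have $(F_{46})^{\ast} = F_{56}$, and since the $(12)$-parastrophe is an involution on identities, any quasigroup $(Q,\cdot)$ satisfying $F_{56}$ has its $(12)$-parastrophe $(Q, \overset{(12)}{\cdot})$ satisfying $F_{46}$.

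From there, I would invoke Theorem~\ref{Identity_F46}, which guarantees that no quasigroup satisfying $F_{46}$ possesses a left or a right unit; in particular $(Q, \overset{(12)}{\cdot})$ has neither. Lemma~\ref{12-parastrophes_2} translates unit elements across the parastrophe (a left unit on one side corresponds to a right unit on the other), so any left unit of $(Q,\cdot)$ would produce a right unit in $(Q, \overset{(12)}{\cdot})$, contradicting Theorem~\ref{Identity_F46}. The symmetric argument rules out a right unit in $(Q,\cdot)$.

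There is essentially no obstacle here: the hard work was already done via the counterexample in Theorem~\ref{Identity_F46}, and the only mildly subtle point is noting that Lemma~\ref{12-parastrophes_2} must be applied in its contrapositive form. The property ``neither left nor right unit'' is manifestly preserved by the parastrophic left/right swap, so no counterexample table, and no further identity manipulation, is required—the proof reduces to a single citation, exactly mirroring Corollary~\ref{identity_F_30} and the other \textquotedblleft has not left and right unit\textquotedblright{} corollaries.
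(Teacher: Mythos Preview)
Your approach via the $(12)$-parastrophe is exactly what the paper does---its entire proof is the single line ``By Theorem~\ref{12-parastrophes_1} $F_{56} = (F_{46})^{\ast}$.'' However, your elaboration misreads the quantifier in Theorem~\ref{Identity_F46}. That theorem does \emph{not} assert that every quasigroup with $F_{46}$ lacks units (indeed, every group satisfies $F_{46}$ and has a unit); its proof merely exhibits one $3\times 3$ counterexample showing that $F_{46}$ does not \emph{force} a left or right unit. Hence your argument ``take an arbitrary $(Q,\cdot)$ with $F_{56}$, pass to its parastrophe, and obtain a contradiction from the presence of a unit via Theorem~\ref{Identity_F46}'' cannot work as written.

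The fix is to run the logic in the existential direction: the specific counterexample from Theorem~\ref{Identity_F46} has a $(12)$-parastrophe that satisfies $F_{56}$ and, by Lemma~\ref{12-parastrophes_2}, also lacks both a left and a right unit. (In fact that Cayley table is symmetric, so it is its own parastrophe and serves directly as the $F_{56}$ counterexample.) This is what the paper's one-line citation implicitly means, and it is also what you gesture at when you mention ``the counterexample in Theorem~\ref{Identity_F46}''; you just need to route the argument through that single example rather than through a universal claim about all $F_{46}$-quasigroups.
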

\begin{proof}
By Theorem \ref{12-parastrophes_1} $F_{56} = (F_{46})^{\ast}$.
\end{proof}

\begin{theorem} \label{Identity_F47}
Quasigroup $(Q, \cdot)$ with identity $F_{47}$  ($ (x\cdot xy)z = x(xy \cdot z) $)  is a group.
\end{theorem}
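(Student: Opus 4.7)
The plan is to reduce identity $F_{47}$ directly to associativity by applying Lemma \ref{Bijection}, exactly in the style of the earlier theorems (e.g.\ Theorem \ref{Identity_F_1}, Theorem \ref{Identity_F11}, Theorem \ref{Identity_F14}, Theorem \ref{Identity_F18}). The key observation is that the subterm $xy$ appears on both sides of $F_{47}$ in the same position inside the surrounding expression $(x \cdot \square)z = x(\square \cdot z)$.

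First I would introduce the abbreviation $t := xy$. By Lemma \ref{Bijection}, for any fixed $x \in Q$ the term $xy$ runs through the whole set $Q$ as $y$ runs through $Q$; equivalently, the left translation $L_x$ is a bijection of $Q$. Hence the identity $F_{47}$, namely $(x \cdot xy)z = x(xy \cdot z)$, is equivalent to the identity
\begin{equation*}
(x \cdot t) z = x (t \cdot z)
\end{equation*}
holding for all $x, t, z \in Q$. This is precisely the identity of associativity.

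Finally, I would invoke Lemma \ref{Group_Unit}: any associative quasigroup is a group. This gives the desired conclusion that $(Q, \cdot)$ with identity $F_{47}$ is a group.

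There is no real obstacle here. The only point to be careful about is noting that in the substitution $t = xy$ the variable $x$ is still the same $x$ appearing outside, so we must observe that $t$ ranges over all of $Q$ independently of the outer $x$; this is exactly what Lemma \ref{Bijection} provides (for each fixed $x$, as $y$ varies over $Q$ the product $xy$ already covers $Q$). After that, the reduction to associativity and the application of Lemma \ref{Group_Unit} are routine.
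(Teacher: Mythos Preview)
Your proof is correct and follows exactly the same approach as the paper: substitute $t := xy$, observe (via Lemma \ref{Bijection}) that $t$ ranges over all of $Q$, obtain associativity, and conclude with Lemma \ref{Group_Unit}. Your write-up is in fact more careful than the paper's one-line argument in making explicit why $t$ can vary independently of the outer $x$.
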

\begin{proof}
If in identity $F_{47}$ we denote term $xy$ by variable $t$, then we have $ xt \cdot z = x \cdot tz $.
\end{proof}

\begin{corollary} \label{identity_F_58}
Quasigroup $(Q, \cdot) $ with identity $F_{58}$   ($ (y \cdot zx)x = y (zx\cdot x) $) is a group.
\end{corollary}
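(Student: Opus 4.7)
The plan is to mimic the pattern used throughout the paper for $(12)$-parastrophe corollaries and appeal to the already-established result for the mate identity $F_{47}$. Specifically, Lemma \ref{12-parastrophes_1} lists $(F_{47})^{\ast} = F_{58}$, so if $(Q, \cdot)$ satisfies $F_{58}$, then the $(12)$-parastrophe $(Q, \ast)$, where $x \ast y = y \cdot x$, satisfies $F_{47}$.

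Next I would invoke Theorem \ref{Identity_F47}, which says that a quasigroup with identity $F_{47}$ is a group. Applied to $(Q, \ast)$, this yields that $(Q, \ast)$ is a group. Finally, Lemma \ref{Group_parastroph} transfers this back: the $(12)$-parastrophe of a group is again a group, so $(Q, \cdot)$ itself is a group.

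There is essentially no obstacle here; the work has already been done in setting up the parastrophe framework (Lemmas \ref{12-parastrophes_1}, \ref{12-parastrophes_2}, \ref{Group_parastroph}) and in proving the result for $F_{47}$ via the substitution $t = xy$ together with Lemma \ref{Group_Unit}. The only thing to verify is that $F_{58}$ really is paired with $F_{47}$ in Lemma \ref{12-parastrophes_1}, which it is. So the proof can be a single sentence of the same form as Corollaries \ref{Identity F_8}, \ref{identity_F_23}, \ref{identity_F_29}, etc.: \emph{By Theorem \ref{12-parastrophes_1}, $F_{58} = (F_{47})^{\ast}$, and the conclusion follows from Theorem \ref{Identity_F47} together with Lemma \ref{Group_parastroph}.}
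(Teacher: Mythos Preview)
Your proposal is correct and matches the paper's approach exactly: the paper's proof is the single line ``By Theorem \ref{12-parastrophes_1} $F_{58} = (F_{47})^{\ast}$,'' with the passage back to $(Q,\cdot)$ via Lemma \ref{Group_parastroph} left implicit. Your version is slightly more explicit but follows the same route.
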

\begin{proof}
By Theorem \ref{12-parastrophes_1} $F_{58} = (F_{47})^{\ast}$.
\end{proof}

\begin{theorem} \label{Identity_F48}
Quasigroup $(Q, \cdot)$ with identity $F_{48}$  ($ (xx \cdot y)z = x(x \cdot yz)$) is a left loop.
\end{theorem}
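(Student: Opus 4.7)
The plan is to establish a left unit by a substitution argument and to block the existence of a right unit by a small Cayley table, directly paralleling the proofs of Theorems~\ref{Identity_F43} and~\ref{Identity_F44}. Throughout I use local left units: for each $a \in Q$, let $f_a$ denote the unique element with $f_a\cdot a = a$ (existence and uniqueness follow from the quasigroup axioms, and this notation is already the paper's convention).

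For the left unit, substitute $x := f_{yz}$ into $F_{48}$. Since $f_{yz}\cdot yz = yz$, the right-hand side collapses, yielding
\[
(f_{yz}f_{yz}\cdot y)\,z \;=\; f_{yz}(f_{yz}\cdot yz) \;=\; f_{yz}\cdot yz \;=\; yz.
\]
Right cancellation by $z$ gives $f_{yz}f_{yz}\cdot y = y$, and by uniqueness of the local left unit at $y$ this forces $f_{yz}f_{yz} = f_y$. The left-hand side of this last equation depends only on $u := yz$, while the right-hand side depends only on $y$. Using Lemma~\ref{Bijection}, for any $y_1,y_2\in Q$ I can choose $z_1,z_2$ with $y_1 z_1 = y_2 z_2$, and this immediately forces $f_{y_1} = f_{y_2}$. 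Hence all local left units coincide with a single element $f$ satisfying $fy = y$ for every $y\in Q$, so $f$ is a (global) left unit.

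For the absence of a right unit, I would exhibit a small quasigroup (probably of order $3$, matching several of the counterexamples already used in this section) that satisfies $F_{48}$ but admits no right unit, produced and verified with Mace4 as elsewhere in the paper. I expect the algebraic part above to be routine; the main conceptual step is noticing that substituting $x := f_{yz}$ makes both $x$-occurrences on the right collapse simultaneously, which is what lets us transfer the left-unit property from $yz$ to $y$. The search for the small counterexample is the only computationally nontrivial piece.
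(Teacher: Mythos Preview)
Your argument for the left unit is correct and is exactly the paper's: substitute $x:=f_{yz}$, use $f_{yz}\cdot yz=yz$ to collapse the right side, and cancel $z$ to get $f_{yz}f_{yz}\cdot y=y$. The paper stops there; your extra paragraph deducing that all $f_y$ coincide just makes explicit what the paper leaves to the reader (equivalently: fix any $a$, and for each $y$ choose $z=y\backslash a$ to see that $f_af_a$ is a global left unit).

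One small correction on the counterexample: there is no order~$3$ model. Up to relabeling, the unique order~$3$ left loop without a right unit is the table used for $F_7$, $F_{16}$, $F_{42}$, etc., and it fails $F_{48}$ (e.g.\ $(0\cdot0)\cdot0=1\cdot0=0$ on the left, but $0\cdot(0\cdot0)=0\cdot1=2$ on the right with $y=z=0$). The paper's Mace4 witness is of order~$5$, so adjust your expectation accordingly.
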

\begin{proof}
We prove that quasigroup $(Q, \cdot)$ with identity $F_{48}$  has left   identity element. In identity $F_{48}$ we put $x = f_{yz}$. Then we have:
\begin{equation} \label{F48_1}
(f_{yz}f_{yz} \cdot y)z = f_{yz}(f_{yz} \cdot yz), \; (f_{yz}f_{yz} \cdot y)z = yz, \; f_{yz}f_{yz} \cdot y = y.
\end{equation}

The following example demonstrates that left  loop with identity $F_{48}$ has
no right  identity element.

\[
\begin{array}{c|cccccc}
\cdot & 0 & 1 & 2 & 3 & 4 \\
\hline
  0 & 1 & 4 & 3 & 0 & 2 \\
    1 & 3 & 0 & 4 & 2 & 1 \\
    2 & 0 & 1 & 2 & 3 & 4 \\
    3 & 2 & 3 & 1 & 4 & 0 \\
    4 & 4 & 2 & 0 & 1 & 3
\end{array}
\]
\end{proof}

\begin{corollary} \label{identity_F_57}
Quasigroup $(Q, \cdot) $ with identity $F_{57}$   ($ (yz\cdot x)x  = y (z \cdot xx) $) is a right loop.
\end{corollary}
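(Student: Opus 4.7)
The plan is to follow exactly the same template used for all the preceding $(12)$-parastrophe corollaries in this section (for instance Corollaries \ref{identity_F_53}, \ref{identity_F_54}, \ref{identity_F_51}, \ref{identity_F_60}). Observe first that by Lemma \ref{12-parastrophes_1} we have the equality $(F_{48})^{\ast} = F_{57}$, so any quasigroup $(Q,\cdot)$ satisfying $F_{57}$ is precisely the $(12)$-parastrophe of a quasigroup $(Q,\ast)$ satisfying $F_{48}$.

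Next I would invoke Theorem \ref{Identity_F48}, which tells us that every quasigroup satisfying $F_{48}$ is a left loop, i.e.\ possesses a left unit element. Applying Lemma \ref{12-parastrophes_2} to this quasigroup, the left unit in $(Q,\ast)$ becomes a right unit in its $(12)$-parastrophe $(Q,\cdot)$. Therefore $(Q,\cdot)$ has a right unit, which is the definition of a right loop.

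There is no obstacle here, since Theorem \ref{12-parastrophes_1} has already performed the combinatorial bookkeeping of matching $F_{48}$ with $F_{57}$, and Lemma \ref{12-parastrophes_2} handles the transfer of one-sided units between parastrophes. Consequently the proof reduces to a single line citing these two results, in complete analogy with the neighbouring corollaries. One could optionally remark, as in the companion corollaries, that the failure of a left unit in the example exhibited in the proof of Theorem \ref{Identity_F48} transfers under the $(12)$-parastrophe to the failure of a left unit here, so the right loop structure is genuinely one-sided; but this is not required for the statement.
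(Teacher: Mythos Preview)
Your proposal is correct and follows exactly the same route as the paper: the paper's proof is the one-line ``By Theorem \ref{12-parastrophes_1} $F_{57} = (F_{48})^{\ast}$,'' with the appeal to Theorem \ref{Identity_F48} and Lemma \ref{12-parastrophes_2} left implicit, precisely as in the neighbouring corollaries you cite.

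One small slip in your optional closing remark: the example in the proof of Theorem \ref{Identity_F48} exhibits failure of a \emph{right} unit (it is a left loop without right identity), not a left unit; under the $(12)$-parastrophe this indeed becomes failure of a left unit for $F_{57}$, so your conclusion is right but the premise is misstated. Since this remark is not needed for the corollary as stated, it does not affect the validity of your proof.
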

\begin{proof}
By Theorem \ref{12-parastrophes_1} $F_{57} = (F_{48})^{\ast}$.
\end{proof}

\begin{theorem} \label{Identity_F49}
Quasigroup $(Q, \cdot)$ with identity $F_{49}$  ($ (xx \cdot y)z = x(xy \cdot z) $) is a left loop.
\end{theorem}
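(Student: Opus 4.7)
The idea is to manufacture a universal left unit out of the local left units $f_{y}$ (defined by $f_y \cdot y = y$), mimicking the strategy used for Theorems~\ref{Identity_F44} and \ref{Identity_F48}. First, I would substitute $x := f_y$ in identity $F_{49}$. Because $f_y y = y$, the inner product $xy$ on the right-hand side collapses, and the identity becomes $(f_y f_y \cdot y) z = f_y \cdot y z$ for all $y, z \in Q$.

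Next, I specialize $z$ to make both sides computable. Taking $z := y \backslash y$ and invoking identity (\ref{(1)}), we get $y(y \backslash y) = y$, so the right-hand side collapses to $f_y \cdot y = y$. Thus $(f_y f_y \cdot y)(y \backslash y) = y = y \cdot (y \backslash y)$, and by the right cancellative property of the quasigroup we may cancel the factor $y \backslash y$ to obtain $f_y f_y \cdot y = y$.

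Substituting this equality back into the displayed relation from the first step, the left-hand side becomes $yz$, giving $yz = f_y \cdot yz$ for all $y, z \in Q$. Fixing $y$ and letting $z$ range over $Q$, the product $yz$ sweeps the whole set $Q$ by Lemma~\ref{Bijection}; hence $f_y \cdot u = u$ for every $u \in Q$, and $f_y$ is a (universal) left unit of $(Q, \cdot)$.

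Finally, to justify the designation \emph{left} loop rather than loop, I would present (in the style of Theorems~\ref{Identity_F44} and \ref{Identity_F48}) a small Cayley table of a quasigroup satisfying $F_{49}$ that possesses no right unit; such an example is readily located with Mace4. The only mildly tricky step in this plan is the choice $z := y \backslash y$, which is exactly the specialization that lets identity (\ref{(1)}) trigger the cascade of cancellations; everything else is routine.
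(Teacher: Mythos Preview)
Your argument is correct. The substitution $x:=f_y$ collapses $xy$ on the right, the choice $z:=y\backslash y$ together with identity~(\ref{(1)}) and right cancellation yields $f_yf_y\cdot y=y$, and feeding this back gives $yz=f_y\cdot yz$, whence $f_y$ is a global left unit via Lemma~\ref{Bijection}. Every step is sound.

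The route, however, is genuinely different from the paper's. The paper does not work with local units $f_y$ at all: it substitutes $y\to (xx)\backslash y$ so that the \emph{left} side of $F_{49}$ collapses to $yz$ by identity~(\ref{(1)}), then applies $\backslash$ to peel off the outer $x$ on the right, and finally sets $x=y$ and uses identity~(\ref{(3)}) to exhibit the left unit in the closed form $x\cdot((xx)\backslash x)$. Your approach instead mirrors the proofs of Theorems~\ref{Identity_F43} and~\ref{Identity_F44}: first establish an idempotency-type relation for $f_y$ and then bootstrap it to a universal left unit. What your version buys is uniformity with those neighbouring proofs and avoidance of nested division terms; what the paper's version buys is an explicit formula for the left unit and no need for the auxiliary cancellation step with $z=y\backslash y$.

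One small completeness remark: the paper actually records the concrete $3$-element counterexample (the same table used for $F_7$, $F_{16}$, $F_{36}$, $F_{42}$, $F_{44}$) rather than merely citing Mace4, so you should include it explicitly.
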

\begin{proof}
We prove that quasigroup $(Q, \cdot)$ with identity $F_{49}$  has left   identity element. In identity $F_{49}$ we change $y\rightarrow (xx)\backslash y$. Then we have:
\begin{equation} \label{F49_1}
(xx \cdot (xx \backslash y)) z\overset{(\ref{(1)})}{=} yz = x((x((xx)\backslash y)) \cdot z).
\end{equation}
Further we use  right division \lq\lq $\backslash $\rq\rq in equality  (\ref{F49_1}).
\begin{equation} \label{F49_2}
(x((xx)\backslash y))z = x\backslash (yz).
\end{equation}
If we put  $x=y$ in equality (\ref{F49_2}), then
we have:
\begin{equation} \label{F49_3}
(x((xx)\backslash x))z = x\backslash (xz) \overset{(\ref{(3)})}{=} z.
\end{equation}

The following example demonstrates that left  loop with identity $F_{49}$ has
no right  identity element.

\[
\begin{array}{c|ccc}
\cdot & 0 & 1 & 2\\
\hline
  0 & 1 & 2 & 0 \\
    1 & 0 & 1 & 2 \\
    2 & 2 & 0 & 1
\end{array}
\]
\end{proof}

\begin{corollary} \label{identity_F_59}
Quasigroup $(Q, \cdot) $ with identity $F_{59}$  ( $ (y \cdot zx)x = y (z\cdot x x) $) is a right loop.
\end{corollary}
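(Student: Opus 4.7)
The plan is to derive Corollary \ref{identity_F_59} as a direct consequence of Theorem \ref{Identity_F49} via the $(12)$-parastrophe duality that governs essentially every corollary in this section. By Lemma \ref{12-parastrophes_1} we have $(F_{49})^{\ast} = F_{59}$, and by Theorem \ref{Identity_F49} every quasigroup satisfying $F_{49}$ has a left unit, so the whole task reduces to transferring the existence of a left unit under $(12)$-parastrophy.

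Concretely, given any quasigroup $(Q, \cdot)$ satisfying $F_{59}$, I would form its $(12)$-parastrophe $(Q, \ast)$ defined by $x \ast y = y \cdot x$. Identities pass to their $(12)$-parastrophe images under this construction, so $(Q, \ast)$ satisfies $(F_{59})^{\ast} = F_{49}$. Theorem \ref{Identity_F49} then supplies a left unit $f$ for $(Q, \ast)$, i.e.\ $f \ast x = x$ for all $x \in Q$. Unwinding the definition of $\ast$ yields $x \cdot f = x$ for all $x$, so $f$ is a right unit of $(Q, \cdot)$, and thus $(Q, \cdot)$ is a right loop. This last transfer step is exactly the content of Lemma \ref{12-parastrophes_2}, so the proof is a one-line composition of Lemma \ref{12-parastrophes_1}, Theorem \ref{Identity_F49}, and Lemma \ref{12-parastrophes_2}.

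There is no real obstacle: Theorem \ref{Identity_F49} asserts that $F_{49}$-quasigroups are left loops but not loops (its counterexample rules out a right unit), and under parastrophy this translates to the statement of Corollary \ref{identity_F_59} that $F_{59}$-quasigroups are right loops but not loops; in fact, the counterexample of Theorem \ref{Identity_F49} can itself be transposed across the main diagonal to witness the absence of a left unit here. Thus nothing in the corollary is stronger than what the parastrophe machinery immediately delivers, and the one-line citation style used in the paper's preceding corollaries suffices.
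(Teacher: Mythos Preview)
Your proposal is correct and follows exactly the paper's approach: the paper's own proof is the single line ``By Theorem \ref{12-parastrophes_1} $F_{59} = (F_{49})^{\ast}$,'' which is precisely the parastrophe duality you spell out, with Lemma \ref{12-parastrophes_2} and Theorem \ref{Identity_F49} left implicit. Your additional remark about transposing the counterexample from Theorem \ref{Identity_F49} to witness the absence of a left unit is also correct and in the spirit of the paper, though the paper does not bother to state it here.
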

\begin{proof}
By Theorem \ref{12-parastrophes_1} $F_{59} = (F_{49})^{\ast}$.
\end{proof}

\begin{theorem} \label{Identity_F44}
Quasigroup $(Q, \cdot)$ with identity $F_{44}$  ($ xx\cdot yz  = x(x y \cdot z) $) is a left loop.
\end{theorem}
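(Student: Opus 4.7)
The plan is to mirror the strategy used in the neighbouring Theorems \ref{Identity_F43} and \ref{Identity_F45}: first, exploit local left companions to manufacture a left unit from $F_{44}$, and then exhibit a small concrete quasigroup satisfying $F_{44}$ but having no right unit. The existence of companions $f_a$ with $f_a \cdot a = a$ for every $a\in Q$ is guaranteed in any quasigroup by Lemma \ref{Bijection}.

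For the left unit, I plan two substitutions. First, set $x := f_y$ in $xx\cdot yz = x(xy\cdot z)$. Since $f_y y = y$, the right-hand side collapses to $f_y \cdot yz$, leaving $f_y f_y \cdot yz = f_y \cdot yz$. Right cancellation by the term $yz$ then yields the local idempotence
\begin{equation*}
f_y f_y = f_y.
\end{equation*}
Second, substitute $x := f_x$ for an arbitrary $x \in Q$ in $F_{44}$. The left-hand side becomes $f_x f_x \cdot yz$, which by the previous step equals $f_x \cdot yz$. Comparing with the right-hand side $f_x(f_x y \cdot z)$ and applying left cancellation by $f_x$ gives $yz = f_x y \cdot z$, and one more right cancellation by $z$ yields $y = f_x y$ for all $y$. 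Thus $f_x$ is in fact a left unit of $(Q, \cdot)$, independent of $x$, so $(Q, \cdot)$ is a left loop.

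To rule out a right unit, I propose exhibiting a $3$-element quasigroup, for example the shifted table on $\{0,1,2\}$ given by $i \cdot j = i + j + 1 \pmod{3}$, which is the same counterexample already used for $F_7$ and $F_{16}$. One checks that $xx \cdot yz = x(xy \cdot z)$ holds for every triple (both sides simplify to $2x + y + z + 2 \pmod 3$), while inspection of the columns of the Cayley table shows that no element $e$ satisfies $x \cdot e = x$ for all $x$.

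The main obstacle is essentially bookkeeping rather than anything conceptual: the cancellations in the two substitutions are routine once the order of operations is tracked carefully, and the verification of $F_{44}$ on the proposed counterexample is a finite (small) check.
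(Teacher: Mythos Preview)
Your argument for the existence of a left unit is correct and is exactly the paper's proof: the two substitutions $x:=f_y$ and $x:=f_x$, followed by the stated cancellations, are identical to the paper's equations (\ref{F44_1}) and (\ref{F44_2}).

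The counterexample, however, is wrong. The operation $i\cdot j = i+j+1 \pmod{3}$ is simply the group $\mathbb{Z}_3$ with its identity relabelled as $2$: you can check directly that column $2$ of its Cayley table is $(0,1,2)$, so $2$ is a two-sided unit and the quasigroup is a loop (indeed a group). Thus it satisfies $F_{44}$ trivially, but it does \emph{not} witness the absence of a right unit. Your computation ``both sides simplify to $2x+y+z+2$'' is also off (both sides are $2x+y+z$), though that discrepancy does not affect equality.

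You have also misremembered the $F_7$/$F_{16}$ table: that table is not $i+j+1$ but rather $i\cdot j = 2i+j+1\pmod 3$, with rows $(1\,2\,0)$, $(0\,1\,2)$, $(2\,0\,1)$. This is the counterexample the paper uses for $F_{44}$ as well. For that operation one has $xx=1$ for every $x$, and a short calculation gives $xx\cdot yz = x(xy\cdot z) = 2y+z+1$, so $F_{44}$ holds; the left unit is $1$, and no column equals $(0,1,2)$, so there is no right unit. Replace your formula by this one and the proof goes through.
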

\begin{proof}
We put  $x= f_y$ in identity $F_{44}$. Then we have:
\begin{equation} \label{F44_1}
f_yf_y\cdot yz  = f_y(f_y y \cdot z), \, f_yf_y\cdot yz  = f_y \cdot y z, \, f_yf_y  = f_y.
\end{equation}

We put  $x= f_x$ in identity $F_{44}$. Then we have:
\begin{equation} \label{F44_2}
f_xf_x\cdot yz  \overset{(\ref{F44_1})}{=} f_x\cdot yz  = f_x(f_x y \cdot z), \, yz  = f_x y \cdot z, \,  y  = f_x y.
\end{equation}
From the last equality in (\ref{F44_2}) it follows that quasigroup $(Q, \cdot)$ with identity $F_{44}$ has left unit.

The following example demonstrates that left  loop with identity $F_{44}$ has
no right  identity element.
\[
\begin{array}{c|ccc}
\cdot & 0 & 1 & 2\\
\hline
    0 & 1 & 2 & 0 \\
    1 & 0 & 1 & 2 \\
    2 & 2 & 0 & 1
\end{array}
\]
\end{proof}

\begin{corollary} \label{identity_F_52}
Quasigroup $(Q, \cdot) $ with identity $F_{52}$  ($ yz\cdot xx  = (y\cdot z x)x $) is a right loop.
\end{corollary}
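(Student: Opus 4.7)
The plan is to mirror the style used throughout the paper for every other corollary that attaches a statement about the $(12)$-parastrophe of a previously analyzed identity, and to rely on the two results the paper has already set up for exactly this purpose: Lemma \ref{12-parastrophes_1}, which gives the pairing of identities under $(12)$-parastrophe, and Lemma \ref{12-parastrophes_2}, which transfers the existence of a left unit in $(Q,\cdot)$ to the existence of a right unit in $(Q, \overset{(12)}{\cdot})$ (and vice versa).

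First I would invoke Lemma \ref{12-parastrophes_1} to record that $(F_{44})^{\ast}=F_{52}$. Since Theorem \ref{Identity_F44} asserts that any quasigroup satisfying $F_{44}$ has a left unit (and hence is a left loop), applying the parastrophe correspondence gives that any quasigroup satisfying $F_{52}$ is the $(12)$-parastrophe of some quasigroup satisfying $F_{44}$. Then Lemma \ref{12-parastrophes_2} converts the left unit of the original into a right unit of the parastrophe, yielding that $(Q, \cdot)$ with identity $F_{52}$ has a right unit, i.e., is a right loop.

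The only remaining point is to note that a right unit need not be a two-sided unit in this class. The counterexample already given for $F_{44}$ in Theorem \ref{Identity_F44} (the $3\times 3$ table that fails to have a right unit) automatically yields, by passing to its $(12)$-parastrophe (i.e., reflecting the Cayley table across the main diagonal), a $3\times 3$ quasigroup satisfying $F_{52}$ with a right unit but no left unit, so \emph{right} loop is the sharpest conclusion.

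I do not anticipate any real obstacle here: the entire content of the corollary is the parastrophe-transport of Theorem \ref{Identity_F44}, and both transport tools are already stated as lemmas in the paper. The proof is therefore a one-line citation of Lemma \ref{12-parastrophes_1} (plus implicit use of Lemma \ref{12-parastrophes_2}), in exact parallel with Corollary \ref{identity_F_51} and Corollary \ref{identity_F_54}.
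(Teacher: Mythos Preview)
Your proposal is correct and matches the paper's own proof exactly: the paper's argument is the single line ``By Theorem \ref{12-parastrophes_1} $F_{52} = (F_{44})^{\ast}$,'' relying on Theorem \ref{Identity_F44} and (implicitly) Lemma \ref{12-parastrophes_2}. Your additional remark about parastrophing the $3\times 3$ counterexample to show sharpness is sound and slightly more explicit than what the paper writes.
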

\begin{proof}
By Theorem \ref{12-parastrophes_1} $F_{52} = (F_{44})^{\ast}$.
\end{proof}

\section{Type of classical Bol-Moufang  identity}

We try to find some invariants of Bol-Moufang identities which demonstrate (without any additional researches) that  a quasigroup  with this identity (in what this identity is true) has left (right, middle) unit element, or it is  a loop, a group.

%Notice,  there are articles in this direction of researches. For example,  properties of identities which led that this  %identity guarantee that quasigroup with it  is a group  isotope are described in article \cite{SOH_95_II}.

We define type of classical Bol-Moufang identity.

\begin{definition}
The order of execution of operations in the left and right side of a quasigroup identity we name a type of the identity. \end{definition}

In Table 1 we also indicate the places of double variable $x$. For example, in the identity
$F_1$   $xy\cdot zx = (xy\cdot z)x$ these places are $\{1,  \, 4\}$.

\section{Table}

\renewcommand{\arraystretch}{1.3}

\begin{longtable}{|p{1.cm}|p{1.9cm}|p{3.cm}|p{0.4cm}|P{0.4cm}|p{0.4cm}|p{0.41cm}|p{1.54cm}|}
\caption{Units in quasigroups with Bol-Moufang identities}
\label{tab:table3}\\
\hline
\textbf{Name} & \textbf{Abbrev.} & \textbf{Identity} & \textbf{f} & \textbf{e} & \textbf{Lo.} & \textbf{Gr.} & \textbf{Type} \\
\hline
$F_1$ &   &  $xy\cdot zx = (xy\cdot z)x$ & + & + & +& + & $(23)=\varepsilon$, \{1,  4\}\\
\hline
$F_3$ &   & $xy\cdot zx = x(y\cdot zx)$ & +& +& +& +& (23)=(13), \{1,  4\}\\
\hline
$F_5$ &   & $(xy \cdot z)x = (x \cdot y z)x$ & + & + & + & +& $\varepsilon = (12)$, \{1,  4\}\\
\hline
$F_{10}$ &   & $x(y\cdot zx) = x(y z \cdot x)$ & + & + & +&  + & $(13)=(132)$, \{1,  4\}\\
\hline
$F_{11}$ &   & $xy\cdot xz = (xy \cdot x)z$ & + & + & + & +& $(23)=\varepsilon$, \{1,  3\}\\
\hline
$F_{12}$ &   & $xy\cdot xz = (x\cdot y x)z$ & + & + & + & +& (23)=(12), \{1,  3\})\\
\hline
$F_{14}$  &   & $xy\cdot xz = x(y \cdot x z)$ & + & + & + & +& (23)=(13), \{1,  3\}\\
\hline
$F_{18}$  &   & $(x\cdot y  x)z = x(y   x \cdot z)$ & + & + & + & + &(12)=(132), \{1,  3\}\\
\hline
$F_{20}$  &   & $x(y  x \cdot z) = x(y \cdot  x  z)$ & + & + & + &  + & (132)=(13), \{1,  3\} \\
\hline
$F_{21}$   &  & $yx \cdot zx = (yx\cdot z)x$  & + & + & + & +& $(23)=\varepsilon$, \{2,  4\}\\
\hline
$F_{23}$   &   & $yx \cdot zx = y ( x z \cdot x)$ & + & + & + & +& $(23)= (132)$, \{2,  4\}\\
\hline
$F_{24}$  &   & $yx \cdot zx = y ( x \cdot z  x)$ & + & + & + & + & $(23)= (13)$, \{2,  4\} \\
\hline
$F_{25}$    &    & $(yx \cdot z)x = (y \cdot x z)x$ & + & + & + & + & $\varepsilon = (12)$, \{2,  4\}\\
\hline
$F_{28}$    &   & $(y \cdot xz)x = y(xz\cdot x)$ & + & + & + & + & $(12) = (132)$, \{2,  4\} \\
\hline
$F_{31}$    &   & $yx \cdot xz = (yx \cdot x)z$ & + & + & + & + & $(23) = \varepsilon $, \{2,  3\}\\
\hline
$F_{32}$   &   & $yx \cdot xz = (y\cdot x x)z$ & + & + & + & + & $(23) = (12) $, \{2,  3\} \\
\hline
$F_{33}$   &   & $yx \cdot xz = y( x x \cdot z)$ & + & + & + & + & $(23) = (132)$, \{2,  3\} \\
\hline
$F_{34}$   &   & $yx \cdot xz = y(x \cdot  x z)$  & + & + & + & + & $(23) = (13)$,  \{2,  3\} \\
\hline
$F_{47}$   &  & $(x\cdot xy)z = x(xy \cdot z)$ & + & + & + & + & $(12) = (132)$, \{1,  2\}\\
\hline
$F_{50}$   &  & $x(x\cdot y z) = x(xy\cdot z)$ & + & + & + &  + & $(13) = (132)$,  \{1, 2\}\\
\hline
$F_{55}$   & & $(yz \cdot x) x = (y \cdot zx)x$  & + & + & + & + & $\varepsilon  = (12)$, \{3,  4\} \\
\hline
$F_{58}$   &  & $(y \cdot zx)x = y (zx\cdot x)$ & + & + & + & + & $(12) = (132)$,  \{3,  4\}\\
\hline
$F_4$ & middle\,\, Moufang  & $xy\cdot zx = x(yz\cdot x)$ & + & + & + & - & (23)=(132), \{3,  4\} \\
\hline
 $F_2$ & middle\,\, Moufang  &  $xy\cdot zx = (x\cdot yz)x$ & + & + &+ & -& (23)=(12),  \{3,  4\}\\
\hline
$F_6$ &  extra \,\, identity & $(xy \cdot z)x = x (y  \cdot z x)$ & + & + & + & -& $\varepsilon = (13)$, \{1,  4\}\\
\hline
$F_{13}$ &  extra\,\, identity  &  $xy\cdot xz = x(y x\cdot z)$ & + & + & + & - & (23)= (132),  \{1,  3\}\\
\hline
$F_{17}$  & left \,\, Moufang   & $(xy \cdot x)z = x(y \cdot  x z)$ & + & + & + & - & $\varepsilon = (13)$, \{1,  3\} \\
\hline
$F_{22}$   & extra \,\, identity   & $yx \cdot zx = (y \cdot x z)x$ &  +& +   & + & - &$(23)= (12)$, \{2,  4\} \\
\hline
$F_{27}$    &  right \,\, Moufang  & $(yx \cdot z)x = y (x\cdot  z  x)$ & + & + & + & - & $\varepsilon = (13)$, \{2,  4\} \\
\hline
$F_{38}$  &  & $(y \cdot xx) z = y(xx \cdot z)$ & + & + & + & - & $(12) = (132)$, \{2,  3\} \\
\hline
$F_{41}$   & LC \,\, identity  & $xx \cdot yz = (x \cdot xy)z$ & + & + & + & - & $(23) = (12)$, \{1,  2\} \\
\hline
$F_{53}$   & RC\,\, identity  & $yz\cdot xx  = y(z x \cdot x)$ & + & + & + & - & $(23) = (132)$, \{3,  4\} \\
\hline
$F_7$ &   & $(xy \cdot z)x = x (y z \cdot x)$ & + & - & - & -& $\varepsilon = (132)$, \{1,  4\}\\
\hline
$F_{16}$  &   & $(xy \cdot x)z = x(y  x\cdot z)$ & + & - & - &  -& $\varepsilon = (132)$, \{1,  3\} \\
\hline
$F_{26}$   &  right \,\, Bol  & $(yx \cdot z)x = y (x z \cdot x)$ & +& - & - & - & $\varepsilon = (132)$, \{2,  4\}\\
\hline
$F_{36}$   & RC \,\, identity & $(yx \cdot x) z = y(xx \cdot z)$ & + & - & - & - & $\varepsilon  = (132)$, \{2,  3\}\\
\hline
$F_{40}$   &  & $y (xx\cdot z)  = y (x\cdot xz)$ & + & - & - &  -& $(132) = (13)$,  \{1,  2\} \\
\hline
$F_{42}$   &  & $xx\cdot yz = (xx \cdot y) z $ & + & - & - & - & $(23) = \varepsilon $,  \{1,  2\}  \\
\hline
$F_{43}$   &  & $xx\cdot yz  = x(x \cdot yz)$ & + & - & - & -& $(23) = (13)$,  \{1,  2\}  \\
\hline
$F_{44}$   &  & $xx\cdot yz  = x(x y \cdot z)$ & + & - & - & - & $(23) = (132)$,  \{1,  2\} \\
\hline
$F_{45}$   &  & $(x \cdot xy)z = (xx \cdot y)z$ & + & - & - & - & $(12) = \varepsilon $,  \{1,  2\} \\
\hline
$F_{48}$   & LC\,\, identity  & $(xx \cdot y)z = x(x \cdot yz)$ & +& - & - & - & $\varepsilon  = (13)$, \{1,  2\} \\
\hline
$F_{49}$   &  & $(xx \cdot y)z = x(xy \cdot z)$ & + & - & - &  - & $\varepsilon  = (132)$,  \{1,  2\} \\
\hline
$F_8$ &   & $(x \cdot y z)x = x (y  \cdot z x)$ & - & +  & - & -& (12)=(13),  \{1,  4\} \\
\hline
$F_{19}$  &  left \,\ Bol  & $(x\cdot y  x)z = x(y \cdot  x  z)$ & - & + & - & -& (12)=(13), \{1,  3\} \\
\hline
$F_{29}$    &   & $(y\cdot xz)x  = y(x \cdot zx)$ & - & + & - & - & $(12) = (13)$,  \{2,  4\} \\
\hline
$F_{35}$   &   & $(yx \cdot x) z = (y\cdot xx)z$  & - & + & - & - & $\varepsilon  = (12)$,  \{2,  3\} \\
\hline
$F_{39}$   & LC\,\, identity  & $(y \cdot xx) z = y(x\cdot xz)$ & - & + & - & - & $(12) = (13)$,  \{2,  3\} \\
\hline
$F_{51}$   &  & $yz\cdot xx  = (yz\cdot x)x$ & - & + & - & - & $(23) = \varepsilon $,  \{3,  4\} \\
\hline
$F_{52}$   &  & $yz\cdot xx  = (y\cdot z x)x$ & - & + & - & - & $(23) = (12)$,  \{3,  4\} \\
\hline
$F_{54}$   &  & $yz\cdot xx  = y(z \cdot x  x)$ & - & + & - & - & $(23) = (13)$, \{3,  4\}  \\
\hline
$F_{57}$   & RC\,\, identity & $(yz\cdot x)x  = y (z \cdot xx)$ & - & + & - & - & $\varepsilon  = (13)$,  \{3,  4\} \\
\hline
$F_{59}$   &  & $(y \cdot zx)x = y (z\cdot x x)$ & - & + & - & - & $(12) = (13)$,  \{3,  4\}  \\
\hline
$F_{60}$   &  & $y(zx \cdot x) = y(z \cdot xx)$ & - & + & - & - & $(132) = (13)$,  \{3,  4\} \\
\hline
$F_9$ &   & $(x \cdot y z)x = x (y   z \cdot x)$ & - & -& - & -& (12) = (132),  \{1,  4\} \\
\hline
$F_{15}$  &   & $(xy \cdot x)z = (x\cdot y  x)z$ & - & - & - & -& $\varepsilon = (12)$,   \{1,  3\}\\
\hline
$F_{30}$   &   & $y(xz \cdot x) = y(x \cdot zx)$ & - & - & - &  - & $(132) = (13)$,  \{2,  4\} \\
\hline
$F_{37}$   & C\,\, identity  & $(yx \cdot x)z = y(x\cdot xz)$ &  - & - & - & - & $\varepsilon  = (13)$, \{2,  3\}  \\
\hline
$F_{46}$   & LC \,\, identity & $(x\cdot xy)z = x(x \cdot yz)$ & - & - & - & - & $(12) = (13)$,  \{1,  2\} \\
\hline
$F_{56}$   & RC\,\, identity  & $(yz \cdot x)x  = y (zx\cdot x)$ & - & - & - & - & $\varepsilon  = (132)$, \{3,  4\}  \\
\hline
\end{longtable}

%%\bibliographystyle{plain}
%%\bibliography{THESIS_1}

\vspace{2mm}
\begin{center}
\begin{parbox}{118mm}{\footnotesize
Natalia Didurik$^{1}$,
Victor Shcherbacov$^{2}$,
\vspace{3mm}

\noindent $^{1}$Ph. D. Student/Dimitrie Cantemir University

\noindent Email: natnikkr83@mail.ru

\medskip

\noindent $^{2}$Principal Researcher/Institute of Mathematics and Computer Science of Moldova

\noindent Email: victor.scerbacov@math.md
}
\end{parbox}
\end{center}

\end{document}